\documentclass[hidelinks,11pt, reqno]{amsart}
\usepackage{amsmath}
\usepackage{amssymb,amscd}
\usepackage{graphicx}
\usepackage{extarrows}
\usepackage{latexsym} 
\usepackage{tikz-cd}
\usepackage{enumitem}
\usepackage{float}
\usepackage[colorlinks]{hyperref}
\usepackage{verbatim} 
\usepackage{amsfonts}
\usepackage{mathrsfs}
\usepackage{nccmath}
\usepackage{mathtools}
\usepackage{tabularray}

\usepackage[T1]{fontenc}
\usepackage{tgtermes}

\usepackage{etoolbox}
\usepackage{titlecaps}
\patchcmd{\subsection}{\bfseries}{\titlecap}{}{}
\patchcmd{\subsection}{-.5em}{.5em}{}{}

\usepackage{xpatch}
\makeatletter   
\xpatchcmd{\@tocline}
{\hfil\hbox to\@pnumwidth{\@tocpagenum{#7}}\par}
{\ifnum#1<0\hfill\else\dotfill\fi\hbox to\@pnumwidth{\@tocpagenum{#7}}\par}
{}{}
\makeatother 

\makeatletter
 \def\l@subsection{\@tocline{2}{0pt}{4pc}{6pc}{}}
\def\l@subsubsection{\@tocline{3}{0pt}{8pc}{8pc}{}}
 \makeatother

\numberwithin{equation}{section}

\newcommand{\C}{\mathbb{C}}
\newcommand{\N}{\mathbb{N}}
\newcommand{\R}{\mathbb{R}}
\newcommand{\Z}{\mathbb{Z}}
\newcommand{\T}{\mathbb{T}}

\DeclareMathOperator{\pr}{Pr}

\DeclareMathOperator{\ann}{Ann}
\DeclareMathOperator{\img}{img}
\DeclareMathOperator{\en}{End}

\DeclareMathOperator{\spn}{Span}

\DeclareMathOperator{\Hom}{Hom}
\DeclareMathOperator{\rank}{Rank}
\DeclareMathOperator{\type}{type}
\DeclareMathOperator{\codim}{Codim}

\theoremstyle{plain}
\newtheorem{theorem}{Theorem}[section]
\newtheorem{cor}[theorem]{Corollary}
\newtheorem{lemma}[theorem]{Lemma}
\newtheorem{prop}[theorem]{Proposition}

\theoremstyle{definition}
\newtheorem{definition}[theorem]{Definition}
\newtheorem{remark}[theorem]{Remark}
\newtheorem{example}[theorem]{Example}
\newtheorem{thm}{Theorem}

\newlength\mylen
\newlist{mycases}{enumerate}{1}
\setlist[mycases,1]{label=\textbf{Case~\arabic*.}, 
  labelwidth=\dimexpr-\mylen-\labelsep\relax,leftmargin=0pt,align=right}

\def \mc{\mathcal}

\begin{document}

\title[A Boothby-Wang construction in generalized contact geometry]{A Boothby-Wang construction in generalized contact geometry}
\author[D. Pal]{Debjit Pal}

\address{Institute of Differential Geometry, Gottfried Wilhelm Leibniz Universit\"{a}t Hannover, Germany}

\email{\href{mailto:mathdebjit@gmail.com}{mathdebjit@gmail.com}}

\subjclass[2020]{Primary: 53D18, 53D35, 57R22, 57R30. Secondary: 53D15, 37C10, 37C86.}

\keywords{Generalized complex structure, generalized contact structure, principal bundle, contact geometry, almost contact structure.}

\begin{abstract}
We establish a generalized analogue of the Boothby-Wang theorem in generalized contact geometry, along with related results. We present a general method for constructing examples of generalized contact structures that are not of Poon-Wade type, and even examples that fail to be generalized contact structures. Using Courant reduction methods, we construct a generalized complex structure on a smooth leaf space and equip the generalized contact manifold with a principal bundle structure whose connection is defined by the generalized contact data. Under mild assumptions, we show that the curvature induces a symplectic foliation on the leaf space. Several examples are provided.
\end{abstract}

\maketitle

\renewcommand\contentsname{\vspace{-1cm}}
{
  \hypersetup{linkcolor=black}
  \tableofcontents
}

\hypersetup{
    linkcolor=blue,
    citecolor=magenta,      
    urlcolor=brown
    }

\section{Introduction}\label{intro}
Contact geometry has long been closely related to even-dimensional geometries such as symplectic and complex geometry, especially via normal almost contact structures; see \cite{blair,gei08}. This connection is well described by the Boothby-Wang theorem for regular contact structures \cite{bw}, as well as by results on normal almost contact structures \cite{mori}, as follows.

\begin{thm}\label{thm-1}
~
   \begin{enumerate}
   \setlength\itemsep{0.4em}
       \item(\cite[Theorem 2]{bw}) Let $M$ be a compact regular contact manifold with contact form $\eta$. Then $M$ is a principal $S^1$-bundle over a symplectic manifold, where $\eta$ is a connection form and the symplectic form is given by its curvature.
       \item(\cite[Theorem 1]{mori}) Let $M\rightarrow M_0$ be a principal $S^1$-bundle with a normal almost contact structure $(\varphi,\xi,\eta)$ on $M$ such that $\eta$ is the connection form and $\xi$ the vertical fundamental vector field. Then $M_0$ is a complex manifold and the curvature of the connection is of type $(1,1)$.
   \end{enumerate} 
\end{thm}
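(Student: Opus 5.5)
Both parts of Theorem~\ref{thm-1} are classical, and I would prove each by turning the geometric data on $M$ into invariant data along the $S^1$-orbits. This invariant data then descends to the quotient.

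\textbf{Part (1).} Let $\xi$ be the Reeb field of $\eta$, so $\eta(\xi)=1$ and $\iota_\xi d\eta=0$. Regularity means each point has a flow box that every integral curve of $\xi$ meets in at most one interval. With compactness this implies that the leaf space $M_0=M/\xi$ is a Hausdorff smooth manifold and that every orbit is closed, hence a circle.

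The main obstacle is that the orbits could have different periods. So the first step is to show that the period function $\lambda(p)$ of the flow of $\xi$ is constant. I would argue as Boothby--Wang do: $\lambda$ is smooth on the regular (compact) manifold. Differentiating the identity $\phi_{\lambda(p)}(p)=p$ and pairing with $\eta$ gives $d\lambda=0$, using $\mathcal{L}_\xi\eta=0$ and $\iota_\xi d\eta=0$. Hence $\lambda$ is locally constant and, after rescaling $\eta$, equal to $1$.

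The flow then defines a free $S^1=\R/\Z$-action. Together with the local sections given by regularity, this makes $\pi:M\to M_0$ a principal $S^1$-bundle with fundamental field $\xi$. Since $\eta(\xi)=1$ and $\mathcal{L}_\xi\eta=0$, the form $\eta$ is a connection form. Its curvature $d\eta$ is basic, because $\iota_\xi d\eta=0$ and $\mathcal{L}_\xi d\eta=0$, so $d\eta=\pi^*\omega$. The form $\omega$ is closed because $\pi^*$ is injective. It is nondegenerate because $\eta\wedge(d\eta)^n\neq0$ forces $\omega^n\neq0$.

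\textbf{Part (2).} Normality means that the almost complex structure $J$ on $M\times\R$ built from $(\varphi,\xi,\eta)$ is integrable. Equivalently, $[\varphi,\varphi]+2\,d\eta\otimes\xi=0$, and this implies $\mathcal{L}_\xi\varphi=0$ and $\mathcal{L}_\xi\eta=0$.

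I would then define $J_0$ on $TM_0$ by $J_0X=\pi_*(\varphi X^h)$, where $X^h$ is the horizontal lift with respect to $\ker\eta$. This is well defined because $\varphi$ is $S^1$-invariant and preserves $\ker\eta$. It satisfies $J_0^2=-1$ because $\varphi^2=-I+\eta\otimes\xi$.

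For integrability, I would compute $N_{J_0}(X,Y)$ by lifting. The horizontal part of $[X^h,Y^h]$ is $[X,Y]^h$, and its vertical part is $-d\eta(X^h,Y^h)\xi$. This makes the horizontal part of $[\varphi,\varphi](X^h,Y^h)$ equal to $N_{J_0}(X,Y)^h$, and normality forces it to vanish.

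The condition that the curvature is of type $(1,1)$ means $d\eta(\varphi X,\varphi Y)=d\eta(X,Y)$. I would obtain it from the normality identity $N^{(2)}=0$ in Blair's notation, namely $(\mathcal{L}_{\varphi X}\eta)Y-(\mathcal{L}_{\varphi Y}\eta)X=0$. Evaluated on horizontal vectors, this gives exactly the required $J_0$-invariance of $d\eta$.

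The only delicate step here is the bookkeeping of the vertical components in the Nijenhuis computation.
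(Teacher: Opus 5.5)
Your proof is correct. It follows the classical direct route, whereas the paper does not prove Theorem~\ref{thm-1} directly: it cites it, and then in Examples~\ref{contact1} and~\ref{nrml1} recovers both parts as the two extreme cases of its generalized Boothby--Wang Theorem~\ref{thmc2}.

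\textbf{Bundle structure.} Here both arguments share the same skeleton: constant period, $\eta$ invariant with $\eta(\xi)=1$, $d\eta$ basic, and nondegeneracy from $\eta\wedge(d\eta)^n\neq0$. The difference is how constancy of the period is shown. In Theorem~\ref{thm1} the paper writes $\eta=f\,d\tau+\sum_i g_i\,dx_i$ in a local trivialization and deduces $\partial f/\partial x_i=\partial g_i/\partial\tau$ from $\iota_R d\eta=0$. It then differentiates $\psi(x)=\int_0^1 f(x,\tau)\,d\tau$ under the integral and uses periodicity of the $g_i$. That argument takes the $S^1$-fibration as input. Your differentiation of $\phi_{\lambda(p)}(p)=p$ is coordinate-free, but takes smoothness of the period function as input. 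With your ordering, Hausdorffness of $M_0$ is most cleanly obtained afterwards, from the free action of the compact group $\R/\lambda\Z$; in the paper it is built into the definition of regularity.

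\textbf{Structure on the base.} The paper never constructs $J_0$ by hand.
\begin{itemize}
\item It descends the Dirac structure $L_-=H_-$, i.e.\ $E^{(1,0)}\cong L_-/(T\mathcal{R}\otimes\C)$, to $L_{red}$ via the Bott connection and Lemma~\ref{imp lemma1}.
\item It reads off the type from $\Delta_-$: $\Delta_-=\ker\alpha$ gives the symplectic case, $\Delta_-=0$ the complex case.
\item Integrability comes from Courant involutivity of $E^{(1,0)}$.
\item The $(1,1)$ property comes from the fact that $K_-=\pr_{TM}(E^{(1,0)})=\{Y-i\varphi Y\mid Y\in\ker\alpha\}$ is involutive and lies in $\ker\alpha\otimes\C$, so $d\alpha(Z,W)=-\alpha([Z,W])=0$ on $K_-$.
\end{itemize}
Your $N^{(2)}=0$ step is the tensorial counterpart of this last point.

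Your Nijenhuis computation in fact already yields it, so you do not need $N^{(2)}$ separately. Fix one normalization: if normality reads $[\varphi,\varphi]+2\,d\eta\otimes\xi=0$, then the vertical part of $[X^h,Y^h]$ is $-2\,d\eta(X^h,Y^h)\xi$, not $-d\eta(X^h,Y^h)\xi$. The vertical component of $[\varphi,\varphi](X^h,Y^h)$ is then $-2\,d\eta(\varphi X^h,\varphi Y^h)\xi$. Normality equates this with $-2\,d\eta(X^h,Y^h)\xi$, which is exactly the $J_0$-invariance of $\omega$.

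\textbf{What each route buys.} Yours is elementary and self-contained: Blair's normality identities plus Newlander--Nirenberg. The paper's explains both classical theorems as reductions of a single Dirac structure, and also applies to generalized contact structures not of Poon--Wade type, at the cost of the Courant-reduction machinery.
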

In even dimensions, a natural unifying framework is generalized complex (GC) geometry, which includes complex and symplectic structures as its two extreme cases. This notion was introduced by Hitchin \cite{hit1} and later, developed by Gualtieri \cite{Gua, Gua2}. Correspondingly, generalized contact geometry was introduced to study the odd-dimensional analogue, providing a natural generalization of both contact and almost contact structures. This theory was initiated by Iglesias-Ponte et al. \cite{pw2}, and subsequently developed by Vaisman \cite{vais1,vais2}, Wade et al. \cite{pw,wade12}, and Sekiya \cite{sekiya}, particularly for cooriented cases. For more details on generalized contact structures, see \cite{aldi, pw3,vita,wright} and the references therein, especially for the non-cooriented case. However, many aspects of this theory are still open, including an analogue of the Boothby-Wang theorem (Theorem \ref{thm-1}) in this framework.
\vspace{0.5em}

As noted earlier, the Boothby-Wang theorem primarily addresses cooriented contact and normal almost contact structures. For this reason, throughout the article, we adopt the notion of \textit{generalized contact structures} (cf. Definition \ref{cntct def}) introduced by Sekiya \cite{sekiya}, which generalizes the notions of Poon and Wade \cite{pw}, and Vaisman \cite{vais2}, particularly in the coorientable setting. To distinguish the latter, namely the notion of generalized contact structures defined by Poon and Wade \cite{pw}, we refer to them as \textit{generalized contact structures of Poon-Wade type}; see Definition \ref{def:p-w}. Coorientable contact structures (Example \ref{contact eg}), cosymplectic manifolds (Example \ref{cosym eg}), and normal almost contact structures (Example \ref{almost eg}) are natural examples, giving generalized contact structures of Poon-Wade type. However, examples not of Poon-Wade type were previously absent in the literature.
\vspace{0.5em}

In this article, we establish a complete analogue of Theorem \ref{thm-1} in generalized contact geometry (see Theorem \ref{thmc2}) and provide a general construction for producing examples of generalized contact structures that are not of Poon-Wade type (cf. Examples~\ref{neweg2}-\ref{neweg2.2}). Moreover, this construction yields examples that are not generalized contact structures; see Example \ref{new eq1}. A generalized almost contact structure on an odd-dimensional manifold $M$ determines a nowhere vanishing vector field $R$, a $1$-form $\eta$ with $\eta(R)=1$. It induces a one-dimensional integrable foliation $\mathcal{R}$ as well as two maximal isotropic subbundles $L_+,L_-\subset(TM\oplus T^*M)\otimes\C$ (cf. \eqref{contact dirac}). The integrability of these subbundles characterizes the associated generalized contact structure; see Definition \ref{def:contct}. 
\vspace{0.5em}

The main contribution of this article is in adapting methods from contact geometry and the reduction theory of Courant algebroids (cf. \cite{burst}) to construct a generalized complex (GC) structure on the leaf space $M/\mathcal{R}$ of $\mathcal{R}$ and to endow $M$ with a principal bundle structure whose connection is given by $\eta$. Under a mild additional condition, we also show that the curvature induces a symplectic foliation on $M/\mathcal{R}$. To do so, an assumption that the leaf space $M/\mathcal{R}$ of $\mathcal{R}$ is a smooth manifold and that $R$ is complete, is often necessary. Precisely, we establish the following result.

\begin{thm}(Theorem \ref{thmc2})
    Let $M$ be a regular complete generalized contact manifold with $[R,\eta]=0$ and $L_+$ as a Courant involutive bundle. Suppose that $H_+:=L_+\cap(TM\oplus\ann(R))\otimes\C$ is a Courant involutive subbundle. Let $K_+:=\pr_{TM}\left((H_++(T\mathcal{R}\otimes\C))/T\mathcal{R}\otimes\C\right)$ where $\pr_{TM}: (TM \oplus T^{\ast}M)\otimes\C\longrightarrow TM\otimes\C$ denotes the natural projection. Then
    \vspace{0.2em}
    \begin{enumerate}
    \setlength\itemsep{0.5em}
    \item $M/\mathcal{R}$ admits a generalized complex (GC) structure $\mathcal{J}_{red}$.
        \item The projection $M\xrightarrow{\Psi} M/\mathcal{R}$ defines a principal $\R$-or $S^1$-bundle structure on $M$ over the generalized complex (GC) manifold $(M/\mathcal{R},\mathcal{J}_{red})$, with $\eta$ as a connection $1$-form and curvature $\omega$ given by $d\eta=\Psi^*\omega$.
        \item If $\gamma\in(0,\infty]$ is the minimal period of the globally defined flow $e^{tR}$, then the cohomology class $[\omega/\gamma]\in H^2(M/\mathcal{R},\Z)\subset H^2(M/\mathcal{R},\R)$ represents the Euler class of the bundle.
         \item If $d\eta|_{\Delta_+}$ is non degenerate at each point of $M$, the curvature $\omega$ determines a symplectic foliation (possibly singular) whose complementary foliation is given by $\ker(\omega)$\,, where $$\Delta_+\otimes\C=K_+\cap\overline{K_+}\,.$$
    \end{enumerate}
Furthermore, when $\dim_\R\Delta_+=0$, $M/\mathcal{R}$ is a complex manifold, and $\omega$ is of type $(1,1)$. Also, $\mathcal{J}_{\mathrm{red}}$ is induced by the complex structure, upto a $B$-field transformation. 
\end{thm}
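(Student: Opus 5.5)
The plan is to treat parts (2) and (3) as essentially classical Boothby–Wang arguments, and parts (1) and (4) as statements about the reduced Dirac/GC data.

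\textbf{Principal bundle structure.} Regularity and completeness of $R$ give a smooth leaf space $M/\mathcal{R}$. The flow $e^{tR}$ acts freely modulo its isotropy. If the minimal period $\gamma$ is finite, it is constant along $M$ by the standard argument of \cite{bw} (the period function is continuous and integer-multiple valued on a connected manifold), so $M\to M/\mathcal{R}$ is a principal $S^1=\R/\gamma\Z$-bundle. Otherwise it is a principal $\R$-bundle.

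The hypothesis $[R,\eta]=0$, i.e. $\mathcal{L}_R\eta=0$, together with $\eta(R)=1$, shows that $\eta$ is an invariant connection form. Cartan's formula then gives $\iota_R d\eta=\mathcal{L}_R\eta-d(\eta(R))=0$ and $\mathcal{L}_R d\eta=0$. Hence $d\eta$ is basic, so $d\eta=\Psi^*\omega$ for a closed $2$-form $\omega$, which proves (2). Part (3) is the standard Chern–Weil/Kobayashi statement that $[\omega/\gamma]$ is the integral Euler class of the circle bundle. For the $\R$-bundle case the bundle is trivial and $\gamma=\infty$ gives the zero class, consistent with the statement.

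\textbf{Reduced GC structure (part (1)).} I would use Courant reduction in the sense of \cite{burst}. Take the isotropic subbundle $K=T\mathcal{R}$, acted on by the flow, with $K^\perp=TM\oplus\ann(R)$. The reduced Courant algebroid is then $\big(K^\perp/K\big)/\R\cong T(M/\mathcal{R})\oplus T^*(M/\mathcal{R})$, possibly twisted by a closed $3$-form, which will be absorbed by a $B$-field.

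The subbundle $H_+=L_+\cap K^\perp_\C$ is involutive by hypothesis, and it is $R$-invariant because $L_+$ is $R$-invariant; this invariance should follow from $[R,\eta]=0$ and the integrability of $L_+$. Its image $(H_++K_\C)/K_\C$ then descends to a Dirac structure $L_{red}$ in the reduced algebroid. It is maximal isotropic by a dimension count: $L_+$ has rank $2n+1$, intersection with $K^\perp$ drops the rank by one, and quotienting by $K$ leaves rank $2n$ in a rank-$4n$ algebroid.

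The main obstacle is proving $L_{red}\cap\overline{L_{red}}=0$. For this I would use the explicit form \eqref{contact dirac} of $L_\pm$: $L_+\cap\overline{L_+}$ is spanned by data involving $R$ and $\eta$. Any element of $H_+\cap\overline{H_+}+K$ must therefore lie in $K_\C$, because $\ann(R)$ excludes the $\eta$ direction. With this, $L_{red}$ defines $\mathcal{J}_{red}$.

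\textbf{Part (4) and the final claim.} $K_+$ is the projected tangent part of $L_{red}$, and $\Delta_+$ is the real part of $K_+\cap\overline{K_+}$. By Gualtieri's local structure, $\Delta_+$ gives the (possibly singular) symplectic-type distribution of $\mathcal{J}_{red}$; it is integrable since $K_+$ is the anchor image of a Lie algebroid. Nondegeneracy of $d\eta|_{\Delta_+}=\Psi^*\omega|_{\Delta_+}$ makes $\omega$ restrict to leafwise symplectic forms, with complement $\ker\omega$.

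When $\Delta_+=0$, $K_+\oplus\overline{K_+}=T_\C$, so $L_{red}=K_+\oplus\ann(K_+)$ twisted by a $B$-field. This means $\mathcal{J}_{red}$ is a $B$-transform of a complex structure, and $K_+$ is involutive. Finally, $\omega$ is of type $(1,1)$ because $d\eta$ pairs trivially on $H_+$ by isotropy. This mirrors \cite{mori}.
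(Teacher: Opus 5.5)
Your overall route (principal bundle from the flow of $R$, then Courant reduction of $L_+$ by $T\mathcal{R}$) is the paper's. However, several load-bearing steps are justified incorrectly, and one of them cannot be repaired without an extra hypothesis.

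\textbf{Invariance.} You claim $L_+$ is $R$-invariant as a consequence of $[R,\eta]=0$ and integrability of $L_+$. That is false. Integrability controls brackets with $E_+$, not with $R=X_++X_-$, which is generally not a section of $L_+$.

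For a counterexample, take $N\times\R$ with $E_+=dt$, $E_-=\partial_t$ and $\Phi=\mathcal{J}_t\oplus 0$ for a $t$-dependent family of GC structures $\mathcal{J}_t$ on $N$. Then $L_+=\C\,dt\oplus L_{\mathcal{J}_t}$ is Courant involutive and $\mathcal{L}_R\eta=0$, but the flow of $\partial_t$ does not preserve $L_+$. Consistently, $H_+=L_{\mathcal{J}_t}$ is not involutive there.

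The invariance has to come from the hypothesis on $H_+$; that is precisely its role in the paper (Theorem~\ref{thm c}, used in \eqref{eq1.3}--\eqref{eq1.4}). For $e,f\in C^\infty(H_+)$ one has $0=\pr_{TM}(e)\langle R,f\rangle=\langle[e,R],f\rangle+\langle R,[e,f]\rangle$. Since $[e,f]\in C^\infty(H_+)$, the last term vanishes, so $[R,e]\in H_+^\perp=L_++T\mathcal{R}\otimes\C$. Since also $\langle[R,e],R\rangle=0$, you get $[R,e]\in H_++T\mathcal{R}\otimes\C$. This invariance modulo $T\mathcal{R}$ is what lets the Bott-type partial connection restrict to $(H_++T\mathcal{R}\otimes\C)/T\mathcal{R}\otimes\C$ and descend by Lemma~\ref{imp lemma1}.

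\textbf{Nondegeneracy.} Your argument for $L_{red}\cap\overline{L_{red}}=0$ does not work. We have $L_+\cap\overline{L_+}=\C E_+$ with $E_+=X_++\eta_+$, and $\langle E_+,R\rangle=\tfrac12\eta_+(X_-)$ can vanish. So ``$\ann(R)$ excludes the $\eta$ direction'' says nothing about $E_+$.

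What is true is the following. Suppose $h+aR=\overline{h'}+bR$ with $h,h'\in H_+$. Pairing with the real element $E_+$, which is orthogonal to both $L_+$ and $\overline{L_+}$, gives $(b-a)\langle E_+,R\rangle=0$. So if $\eta_+(X_-)\neq0$, then $a=b$ and $h=\overline{h'}\in\C E_+\cap(T\mathcal{R}^\perp\otimes\C)=0$. If instead $R\in L_+$, then $E_+\in\R R$ and the claim is immediate.

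Without such a condition the step fails; the paper's Remark~\ref{imp rmk1} merely asserts $\langle E_\bullet,R\rangle\neq0$. Here is a counterexample:
\begin{itemize}
\item Start from the cosymplectic structure $(dt,\,dx\wedge dy)$ on $\R^3$ (Example~\ref{cosym eg}), apply $e^{B}$ with the closed $\partial_t$-invariant form $B=dt\wedge dx$, and swap the labels, so that $E_+=\partial_t+dx$ and $E_-=dt$.
\item Then $L_+=\spn\{\partial_t+dx,\ \partial_x-dt-i\,dy,\ \partial_y+i\,dx\}$ is involutive, $R=\partial_t$, $\eta=dt+dx$ and $[R,\eta]=0$.
\item Moreover $H_+=\spn\{\partial_t+dx,\ \partial_y+i\,dx\}$ is involutive.
\item Yet the reduced bundle is $\spn\{dx,\partial_y\}\otimes\C$, which is real, so it is not a GC structure.
\end{itemize}
So part (1), read as a statement about the reduced structure, needs an added hypothesis such as $\eta_+(X_-)\neq0$ or $R\in L_+$. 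This applies both to your proof and to the paper's.

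\textbf{Smaller points.}
\begin{itemize}
\item Your period argument (``continuous and integer-multiple valued'') is not valid. Circle foliations with smooth leaf space can have non-constant period (e.g.\ $R=f(y)\partial_\theta$ on $S^1\times\R$ with $f>0$ non-constant).
\item That argument also never uses $[R,\eta]=0$, which is exactly what is needed. Since $i_Rd\eta=0$, the period $\int_{\text{orbit}}\eta$ is locally constant; this is the paper's computation $\partial_{x_i}\psi=\int_0^1\partial_\tau g_i\,d\tau=0$ in Theorem~\ref{thm1}.
\item You do not exclude a mixture of compact and non-compact orbits. The paper handles this via Thurston's stability theorem and an open--closed argument.
\item Isotropy of $H_+$ refers to $\langle\cdot,\cdot\rangle$ and says nothing about $d\eta$. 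The $(1,1)$ claim rests on $d\eta(X,Y)=-\eta([X,Y])=0$ for $X,Y$ in a Lie-involutive $(1,0)$-distribution contained in $\ker\eta\otimes\C$, which is the mechanism the paper uses.
\end{itemize}
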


The condition $[R,\eta]=0$ ensures that $\eta$ is basic with respect to the foliation $\mathcal{R}$. Moreover, in Examples \ref{contact1}-\ref{nrml1}, we show that Theorem \ref{thm-1} is a special case of Theorem \ref{thmc2} when the generalized contact structure is induced by a contact form or by a normal almost contact structure. In the case of a cosymplectic manifold, Example \ref{cosymp1} shows that Theorem \ref{thmc2} yields a flat bundle where $M/\mathcal{R}$ is symplectic, and the GC structure is induced by the symplectic form (cf. Example \ref{symplectic eg}).  
\vspace{0.5em}

Now, the proof of Theorem \ref{thmc2} can be divided into two parts. First, we establish a principal bundle structure on $M$. Following the approach in \cite{grabo25}, we show in Theorem \ref{thm1} that such a structure exists whenever the map $M\xrightarrow{\Psi} M/\mathcal{R}$ is a fibration, even if $M$ is only a regular generalized almost contact manifold satisfying $[R,\eta]=0$. Next, assuming $R$ is complete, we prove that the leaves of $\mathcal{R}$ are diffeomorphic either to $\R$ or to $S^1$. In this case, the fibration assumption on $\Psi$ is no longer necessary, since applying Theorem \ref{thm1} locally shows that the global flow generated by $R$ induces the desired principal bundle structure where $[\omega/\gamma]$ represents the Euler class and $\gamma\in(0,\infty]$ is the minimal period of the global flow; see Theorem \ref{thm2}.
\vspace{0.5em}

In the second part, we construct the GC structure $\mathcal{J}_{red}$ on $M/\mathcal{R}$. The main idea of this construction is to obtain a maximal isotropic subbundle $L_{red}\subset(T(M/\mathcal{R})\oplus T^*(M/\mathcal{R}))\otimes\C$ that is involutive with respect to the Courant bracket on $M/\mathcal{R}$, and that induces an orthogonal (with respect to the bilinear form \eqref{bilinear} on $M/\mathcal{R}$) splitting of $(T(M/\mathcal{R})\oplus T^*(M/\mathcal{R}))\otimes\C$; cf. Definition \ref{gcs}. To do this, first it is essential to consider a subbundle $E\subset TM\oplus T^*M$ for which the restricted Courant bracket is closed on basic sections of $E$ (cf. Definition \ref{imp def}) with respect to the foliation. This ensures a direct connection with the standard Courant algebroid $T(M/\mathcal{R})\oplus T^*(M/\mathcal{R})$. We then need to find an appropriate Dirac structure (that is, a maximal, isotropic Courant involutive subbundle) of $E\otimes\C$, and show that it descends to $M/\mathcal{R}$. For further details on Courant algebroids, we refer to \cite{burst} and the references therein.
\vspace{0.5em}

By the reduction theory of Courant algebroids, a natural choice for $E$ is $\mathcal{N}_R\oplus\ann(R)$ where $\mathcal{N}_R$ is the normal bundle of $\mathcal{R}$, and the corresponding Dirac structure is $(H_++(T\mathcal{R}\otimes\C))/T\mathcal{R}\otimes\C$, viewed as a subbundle of $(TM\oplus\ann(R))\otimes\C/T\mathcal{R}\otimes\C\cong(\mathcal{N}_R\oplus\ann(R))\otimes\C$, since both bundles are Courant involutive on basic sections (cf. Proposition \ref{prop  c1} and Equation \eqref{1.2}). Next, we need to identify the fibers of these bundles along the leaves of $\mathcal{R}$. For this, it is necessary that $H_+$ be a subbundle and that $H_++(T\mathcal{R}\otimes\C)$ be Courant involutive. Proposition \ref{prop c} shows that $H_+$ is a always a subbundle, and we prove in Theorem \ref{thm c} that $H_++(T\mathcal{R}\otimes\C)$ is Courant involutive if and only if $H_+$ is so. By extending the Bott connection (cf. \cite{bott71}) from $\mathcal{N}_R$ to $\mathcal{N}_R\oplus\ann(R)$ and $(H_++(T\mathcal{R}\otimes\C))/T\mathcal{R}\otimes\C$, we can apply Lemma \ref{imp lemma1} for the appropriate identification of the fibers, which shows that the bundles $\mathcal{N}_R\oplus\ann(R)$ and $(H_++(T\mathcal{R}\otimes\C))/T\mathcal{R}\otimes\C$ descend to $T(M/\mathcal{R})\oplus T^*(M/\mathcal{R})$ and the desired Dirac structure $L_{red}$ over $M/\mathcal{R}$, respectively. The Courant involutiveness of $H_++(T\mathcal{R}\otimes\C)$ is essential for restricting the partial connection (Equation \ref{eq1.4}). Therefore, we prove the following result, which completes the proof of Theorem \ref{thmc2}.

\begin{thm}(Theorem \ref{thm c1})
Let $M$ be a generalized contact manifold with Courant involutive bundle $L_+$. Assume that $H_+:=L_+\cap(TM\oplus\ann(R))\otimes\C$ is Courant involutive. Then,
\vspace{0.2em}
\begin{enumerate}
\setlength\itemsep{0.3em}
    \item Both vector bundles  $\mathcal{N}_R\oplus\ann(R)$ and $(H_++(T\mathcal{R}\otimes\C))/T\mathcal{R}\otimes\C$ admit partial connections along $\mathcal{R}$, that is, $T\mathcal{R}$-connections, which are flat and have trivial holonomy.
    \item $(H_++(T\mathcal{R}\otimes\C))/T\mathcal{R}\otimes\C$ induces a generalized complex structure on $(\mathcal{N}_R\oplus\ann(R))\otimes\C$.
    \item If $\mathcal{R}$ is simple, $(H_++(T\mathcal{R}\otimes\C))/T\mathcal{R}\otimes\C$ descends to a GC structure on $M/\mathcal{R}$.
    \end{enumerate}
\end{thm}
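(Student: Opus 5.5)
The plan is to follow the Courant-reduction strategy outlined in the introduction. Set $E:=TM\oplus\ann(R)$ and $D:=H_++(T\mathcal{R}\otimes\C)$. I use the facts announced earlier: by Proposition \ref{prop  c1} and \eqref{1.2}, $E$ is closed under the Courant bracket on basic sections. By Proposition \ref{prop c}, $H_+$ is a subbundle. By Theorem \ref{thm c}, Courant involutivity of $H_+$ is equivalent to that of $D$. So the hypotheses make $D$ a Courant involutive subbundle of $E\otimes\C$ containing $T\mathcal{R}\otimes\C$. The proof then has three steps, matching items (1)--(3) of the statement.

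\textbf{Step 1: the partial connections.} On $E/T\mathcal{R}\cong\mathcal{N}_R\oplus\ann(R)$ I define
\[
\nabla_{fR}[X+\xi]=f\,[\,[R,X]+\mathcal{L}_R\xi\,],
\]
which extends the Bott connection on $\mathcal{N}_R$. To see it is well defined:
\begin{itemize}
\item $\mathcal{L}_R\xi$ lies in $\ann(R)$ whenever $\xi$ does, since $(\mathcal{L}_R\xi)(R)=R(\xi(R))-\xi([R,R])=0$;
\item $[R,gR]\in T\mathcal{R}$, so the formula is independent of the representative modulo $T\mathcal{R}$.
\end{itemize}
Because $T\mathcal{R}$ has rank one, flatness is automatic: the curvature $\nabla_{fR}\nabla_{gR}-\nabla_{gR}\nabla_{fR}-\nabla_{[fR,gR]}$ reduces to a $C^\infty$-linearity check.

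To restrict this connection to $D/T\mathcal{R}\otimes\C$, I use the Courant involutivity of $D$. For $e=X+\xi\in\Gamma(D)$, the Dorfman/Courant bracket $[\![R,e]\!]$ equals $[R,X]+\mathcal{L}_R\xi$ up to the term $\tfrac12 d(\xi(R))$, which vanishes because $\xi\in\ann(R)$. Since $R\in D$, this bracket lies in $\Gamma(D)$, so $\nabla_R$ preserves $D/T\mathcal{R}$.

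For trivial holonomy, parallel transport along a leaf is given by the linearized flow $(e^{tR})_*$ acting on $TM$ and $\ann(R)$. This action is globally defined on each leaf, so the holonomy around a closed leaf is $d(e^{\gamma R})$ with $e^{\gamma R}=\mathrm{id}$; hence it is trivial.

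\textbf{Step 2: the fibrewise GC structure.} On $E/T\mathcal{R}$ the pairing \eqref{bilinear} descends, because $T\mathcal{R}$ pairs to zero with $\ann(R)$. I will check three facts:
\begin{enumerate}
\item $D/T\mathcal{R}$ is isotropic, since $H_+\subset L_+$ is isotropic and $R$ pairs trivially with $H_+$;
\item $D/T\mathcal{R}$ has half the rank of $E/T\mathcal{R}$;
\item $(D/T\mathcal{R})\cap\overline{(D/T\mathcal{R})}=0$.
\end{enumerate}
For (2) I would compute $\operatorname{rank}H_+$ from the description \eqref{contact dirac} of $L_+$: it should equal $n$ when $\dim M=2n+1$, giving $\operatorname{rank}D/T\mathcal{R}=n=\tfrac12\operatorname{rank}(E/T\mathcal{R})$. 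For (3) I would use $L_+\cap\overline{L_+}$ together with the decomposition $L_\pm$ of the generalized almost contact structure. These three facts show that $D/T\mathcal{R}$ is a maximal isotropic subbundle with $D/T\mathcal{R}\oplus\overline{D/T\mathcal{R}}=(E/T\mathcal{R})\otimes\C$, i.e.\ a generalized complex structure in the sense of Definition \ref{gcs}.

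\textbf{Step 3: descent when $\mathcal{R}$ is simple.} Apply Lemma \ref{imp lemma1}. A flat partial connection with trivial holonomy identifies fibres along each (connected) leaf. Hence $E/T\mathcal{R}\cong\Psi^*(T(M/\mathcal{R})\oplus T^*(M/\mathcal{R}))$, using $d\Psi$ and the identification of $\ann(R)$ with pulled-back covectors. Since $D/T\mathcal{R}$ is parallel by Step 1, it descends to a subbundle $L_{red}$.

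Basic sections of $E$ correspond to sections downstairs, and the bracket and pairing are compatible by Proposition \ref{prop  c1}. So isotropy, maximality, the transversality $L_{red}\cap\overline{L_{red}}=0$, and involutivity all descend, and $L_{red}$ defines $\mathcal{J}_{red}$.

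The main obstacle I expect is the rank and transversality count in Step 2. This requires showing that $H_+$ has constant rank exactly $n$ and that its image modulo $T\mathcal{R}$ meets its conjugate trivially. That needs careful use of the explicit form of $L_+$ and of the components $\eta$ and $R$ in \eqref{contact dirac}.
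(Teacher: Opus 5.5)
Two steps of your plan fail as written. The second failure also affects the statement itself.

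\textbf{Holonomy (Step 1).} The time-$\gamma$ map $e^{\gamma R}$ fixes a closed orbit of period $\gamma$, but it is not the identity near that orbit unless nearby orbits have the same period. Theorem \ref{thm1} obtains constant periods from $[R,\eta]=0$, which is not assumed here. Moreover, the differential of $e^{\gamma R}$ along the orbit, taken modulo $T\mathcal{R}$, is the linearized Poincar\'e return map, and this need not be trivial. For example, take the Reeb flow of an irrational ellipsoid (Example \ref{contact eg}, where $L_-$ is involutive and $H_-=L_-$): its two closed orbits have return maps that are irrational rotations. So without simplicity the holonomy need not be trivial. The paper derives trivial holonomy only from simplicity of $\mathcal{R}$. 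Since $\Psi$ is a submersion with connected fibres, $d\Psi$ identifies $\mathcal{N}_R\oplus\ann(R)$ with $\Psi^*(T(M/\mathcal{R})\oplus T^*(M/\mathcal{R}))$. The parallel sections along a fibre are exactly the $\Psi$-related ones, so parallel transport is the identity. You already use this identification in Step 3; use it there for the holonomy as well. Your restriction of $\nabla$ to $D/T\mathcal{R}$ is correct: it uses $[R,X+\xi]=[R,X]+\mathcal{L}_R\xi$ and Courant involutivity of $D\ni R$, and is a little more direct than \eqref{eq1.3}.

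\textbf{Step 2.} Your rank counts are off. The bundle $\mathcal{N}_R\oplus\ann(R)$ has rank $4n$, and $D/T\mathcal{R}$ has rank $2n$. The bundle $H_+$ has rank $2n$, or $2n+1$ when $R\in L_+$; in that case $D=L_+$ and $D/T\mathcal{R}\cong E^{(1,0)}$. So maximal isotropy is just Proposition \ref{prop c}(5).

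The real problem is the transversality you flag. Suppose $h-\overline{h'}=\lambda R$ with $h,h'\in H_+$. Pairing with the real section $E_+$, which is orthogonal to both $L_+$ and $\overline{L_+}$, gives $\lambda\langle E_+,R\rangle=0$, where $\langle E_+,R\rangle=\tfrac12\eta_+(X_-)$. When this is nonzero, $\lambda=0$ and $h\in(L_{E_+}\otimes\C)\cap(T\mathcal{R}^\perp\otimes\C)=0$. This is what Remark \ref{imp rmk1} uses. However, when $R\notin L_+$ and $\eta_+(X_-)=0$ at a point, $E_+$ itself lies in $H_+$, is real, and is not in $T\mathcal{R}$. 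So transversality fails there.

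This really happens. On $\R^3$ take $E_+=\partial_z$ and $E_-=\partial_x+dz$, so that $\{E_+,E_-\}^\perp=\spn\{\partial_x,\partial_y,dx-\partial_z,dy\}$. Let $\Phi$ be the constant structure on this complement with $+i$-eigenbundle $\spn\{\partial_x-i\partial_y,\,dx-\partial_z-i\,dy\}$. Then:
\begin{itemize}
\item $L_+$ and $H_+=\spn\{\partial_z,\partial_x-i\partial_y\}$ are Courant involutive;
\item $R=\partial_x+\partial_z\notin L_+$ is complete, and its foliation is simple;
\item $\eta=dz$ satisfies $[R,\eta]=0$.
\end{itemize}
Yet $D=TM\otimes\C$, so $D/T\mathcal{R}=\mathcal{N}_R\otimes\C$ is real and descends to the real Dirac structure $T\R^2$, not a generalized complex structure.

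So items (2) and (3) need an extra hypothesis, such as $R\in L_+$ or $\eta_+(X_-)$ nowhere zero. With such a hypothesis, the pairing argument above completes your Step 2.
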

Finally, in Subsection \ref{non pw}, we present a general method for constructing examples of generalized contact structures that are not of Poon-Wade type. In particular, we obtain new examples on $\mathbb{R}^{2n+1}$ (Example \ref{neweg2}) and on the three-dimensional Heisenberg group (Example \ref{new eq1}). Moreover, in Example \ref{neweg2.2}, we show that $\mathbb{T}^2 \times S^1$ admits generalized contact structures that are neither product type (cf. Example \ref{prodct1}) nor Poon-Wade type.

\section{Preliminaries}\label{prelim}
\subsection{Generalized complex structures} We first start by recalling some basic notions of generalized complex (in short GC) geometry. In this subsection, we shall rely upon  \cite{Gua} and \cite{Gua2} for most of the definitions and results.  

\vspace{0.3em}
Given any smooth manifold $M$, the direct sum of tangent and cotangent bundles of $M$, which we denote by $TM\oplus T^{*}M$, is endowed with a natural symmetric bilinear form,
\begin{equation}\label{bilinear}
    \langle X+\xi,Y+\beta\rangle\,:=\,\frac{1}{2}(\xi(Y)+\beta(X))\,.
\end{equation}
It is also equipped with the \textit{Courant Bracket}  defined as follows.
\begin{definition}
Given a real closed form $H\in\Omega^3(M)$, the $H$-twisted Courant bracket is a skew-symmetric bracket defined on smooth sections of $TM\oplus T^{*}M$, given by
\begin{equation}\label{bracket}
    [X+\xi,Y+\beta]_H := \mathcal{L}_{X}Y+\mathcal{L}_{X}\beta-\mathcal{L}_{Y}\xi-\frac{1}{2}d(i_{X}\beta-i_{Y}\xi)+i_Yi_X H,
\end{equation}  
where $X,Y\in C^{\infty}(TM)$, $\xi,\beta\in C^{\infty}(T^{*}M)$, and $\mathcal{L}_{X},\,\, i_{X}$ denote the Lie derivative and the interior product of forms with respect to the vector field $X$, respectively. For $H=0$, we simply refer to it as the Courant bracket and denote it by $[\cdot\,,\cdot\,]$. 
\end{definition}
Let $\pr_{TM}: (TM \oplus T^{\ast}M) \longrightarrow TM $ denote the natural projection. The complexified version of $\pr_{TM}$ will also be denoted by $\pr_{TM}$ for simplicity of notation. There are natural relations obtained by combining the Courant bracket with the symmetric bilinear pairing, as follows.
\vspace{0.2em}
\begin{equation}\label{eqcc}
   \begin{aligned}
   \bullet\,\,&\pr_{TM}(s_1)\langle s_2,s_3\rangle=\langle [s_1,s_2],s_3\rangle +\langle s_2,[s_1,s_3]\rangle;\\
       \bullet\,\,&[[s_1, s_2], s_3] + [s_2, [s_1, s_3]]-[s_1, [s_2, s_3]]=\frac{1}{3}d\bigg(\langle [s_1,s_2],s_3\rangle+\langle s_1,[s_2,s_3]\rangle\\
       &\hspace{5.5cm}\quad\quad\quad-\langle s_2,[s_1,s_3]\rangle\bigg)\,.
   \end{aligned}
\end{equation}
where $s_1,s_2,s_3\in C^\infty(TM \oplus T^{\ast}M)$ and $d$ is the exterior derivative.
\medskip

We are now ready to define the notion of generalized complex (GC) structures on a $2n$-dimensional smooth manifold $M$ in two equivalent ways.
\begin{definition}(cf. \cite{Gua})\label{gcs}
A \textit{generalized complex structure} (in short GCS) is determined by any of the following two equivalent sets of data:

\vspace{0.5em}
\begin{enumerate}
\setlength\itemsep{1em}
    \item  A bundle automorphism $\mathcal{J}_{M}$ of $TM\oplus T^{*}M$ which satisfies the following conditions:
    \vspace{0.2em}
    \begin{itemize}
 \setlength\itemsep{1em}
        \item[(a)] $\mathcal{J}_{M}^{2}=-I_{(TM\oplus T^*M)}$ where $I_{(TM\oplus T^*M)}$ is the identity automorphism of $(TM\oplus T^*M)$.
        \item[(b)] $\mathcal{J}_{M}^{*}=-\mathcal{J}_{M}$, that is, $\mathcal{J}_{M}$ is orthogonal with respect to the natural pairing in \eqref{bilinear}
        \item[(c)] $\mathcal{J}_{M}$ has vanishing {\it Nijenhuis tensor}, that is, for all $C, D \in C^{\infty}(TM\oplus T^{*}M)$,
   $$ N(C, D) :=[\mathcal{J}_{M}C, \mathcal{J}_{M}D]-\mathcal{J}_{M} [\mathcal{J}_{M}C, D] - \mathcal{J}_{M} [C, \mathcal{J}_{M} D] 
    - [C, D]=0\,. $$ 
    \end{itemize}
     \item A subbundle, say $L_{M}$, of $(TM\oplus T^{*}M)\otimes\C$ which is maximal isotropic with respect to the natural bilinear form \eqref{bilinear}, involutive with respect to the Courant bracket \eqref{bracket}, and satisfies $L_{M}\cap\overline{L_{M}}=\{0\}$.
\end{enumerate}
\end{definition}
In Definition \ref{gcs}, the two equivalent conditions are related to each other by the fact that the subbundle $L_M$ may be obtained as the $+i$-eigenbundle of the automorphism $\mathcal{J}_M$.

\vspace{0.5em}
Given any GC manifold  $(M,\,\mathcal{J}_{M})$, we can deform $\mathcal{J}_{M}$ by a real closed $2$-form $B$, known as a \textit{$B$-field transformation}, to get another GCS on $M$,  \begin{equation}\label{B transformation}
        (\mathcal{J}_{M})_{B}=e^{-B}\circ\mathcal{J}_{M}\circ e^{B}\quad\text{where}\quad e^{B}=\begin{pmatrix} 
	       1 & 0 \\
	       B & 1 \\
	    \end{pmatrix}\,.
    \end{equation}
The $+i$-eigenbundle of $(\mathcal{J}_{M})_{B}$ is 
\begin{equation}\label{L-B}
(L_{M})_{B}=\{X+\xi-B(X,\,\cdot)\,|\,X+\xi\in L_{M}\}\,.    
\end{equation}
Let us consider some simple examples of GCS.
\begin{example}\label{complx eg}
Let $(M,\, J_{M})$ is a complex manifold with a complex structure $J_{M}$. Then the natural GCS on $M$ is given by the bundle automorphism  
\[
\mathcal{J}_{M}:=
\begin{pmatrix}
 J_{M}     &0 \\
    
    0        &-J^{*}_{M}
\end{pmatrix}: TM\oplus T^{*}M\longrightarrow TM\oplus T^{*}M\,.
\] Its corresponding $+i$-eigen bundle is
$$L_{M}=T^{1,0}M\oplus(T^{0,1}M)^{*}\,.$$
\end{example}
\begin{example}\label{symplectic eg}
Let $(M,\,\omega)$ be a symplectic manifold with a symplectic structure $\omega$. Then, the bundle automorphism   
\[
\mathcal{J}_{M}:=
\begin{pmatrix}
    0    &-\omega^{-1}\\
    \omega    &0
\end{pmatrix}: TM\oplus T^{*}M\longrightarrow TM\oplus T^{*}M\,,
\] gives a natural GCS on $M$. The $+i$-eigen bundle of this GCS is 
$$L_{M}=\{X-i\omega(X)\,|\,X\in TM\otimes\C\}\,.$$
\end{example}  

\begin{definition}\label{def:type} Let $\Delta_M \otimes \C := E_M \bigcap \overline{E_M}$ where $E_M:=\pr_{TM}(L_M)$. Here $\pr_{TM}$ is the projection on $(TM\oplus T^{*}M)\otimes\C$. For each $x\in M$, type of $\mathcal{J}_{M}$ at $x$ is defined as 
    $$\type(x):=\codim_{\C}((E_{M})_{x})=\frac{1}{2}\codim_{\R}((\Delta_{M})_{x})\,.$$ A point $x\in M$ is called a regular point of $M$ if $\type(x)$ is constant in a neighborhood of $x$ and $M$ is called a regular GC manifold if each point of $M$ is a regular point. Note that, $0\leq\type(x)\leq\frac{\dim_{\R}M}{2}$.
\end{definition}
 We have the generalized Darboux theorem around any regular point.
\begin{theorem}(\cite[Theorem 4.3]{Gua2})\label{darbu thm}
For a regular point $x\in (M,\mc{J}_{M})$ of $\type(x)=k$, there exists an open neighborhood $U_{x}\subset M$ of $x$ such that, after a $B$-transformation, $(U_{x},\mc{J}_{M}|_{U_{x}})$ is diffeomorphic to $U_1\times U_2$ as GC manifolds, where $U_1\subset(\R^{2n-2k},\omega_0), U_2\subset\C^{k}$ are  open subsets with $\omega_0$ being the standard symplectic structure, and $U_1\times U_2$ is obtained with the product GCS.
\end{theorem}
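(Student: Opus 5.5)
The plan is to produce, near a regular point $x$ of type $k$, coordinates in which both the leafwise symplectic data and the transverse complex data take standard form. The closed-form extension needed for the $B$-field will come out of the same construction.

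\textbf{Step 1: write $L_M$ in terms of $E_M$ and a $2$-form.} Near $x$ the type is constant, so $E_M=\pr_{TM}(L_M)$ is a smooth subbundle of $TM\otimes\C$. Maximal isotropy of $L_M$ gives the standard description
$$L_M=L(E_M,\varepsilon)=\{X+\xi \mid X\in E_M,\ \xi|_{E_M}=i_X\varepsilon\},\qquad \varepsilon\in C^\infty(\wedge^2E_M^*).$$
Courant involutivity of $L_M$ is equivalent to two conditions: $E_M$ is involutive under the Lie bracket, and $d_{E_M}\varepsilon=0$ for the Lie-algebroid differential of $E_M$.

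The condition $L_M\cap\overline{L_M}=0$ then gives two further facts. First, $E_M+\overline{E_M}=TM\otimes\C$. Second, $\omega:=\operatorname{Im}\varepsilon|_{\Delta_M}$ is nondegenerate on $\Delta_M$. Since $E_M\cap\overline{E_M}=\Delta_M\otimes\C$ is involutive, $\Delta_M$ is an integrable real distribution of rank $2n-2k$, and $\omega$ is leafwise closed.

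\textbf{Step 2: adapted coordinates (the main obstacle).} The pair $(E_M,\Delta_M)$ is an involutive structure with $E_M+\overline{E_M}=TM\otimes\C$, i.e.\ an elliptic involutive structure. I would invoke Nirenberg's complex Frobenius theorem to obtain coordinates $(p_1,\dots,p_{2n-2k},z_1,\dots,z_k)$ near $x$ with
$$E_M=\spn\{\partial_{p_j},\ \partial_{\bar z_l}\}.$$
So $E_M$ is the product of the leaf directions with $T^{0,1}\C^k$. This is the step I expect to be the real difficulty: it is the Newlander--Nirenberg theorem with smooth dependence on leafwise parameters, not merely Frobenius. If a direct citation is unavailable, I would argue via the transverse complex structure that $E_M/\Delta_M\otimes\C$ induces on the local leaf space, using the fact that it is invariant along leaves.

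\textbf{Step 3: extend $\varepsilon$ to a closed form and apply a $B$-field.} In the coordinates of Step 2 the complex $(\Omega^\bullet(E_M),d_{E_M})$ is a leafwise de Rham complex tensored with a $\bar\partial$-complex in $z$. A Poincaré/Dolbeault lemma with parameters on a polydisc-type neighbourhood then shows that $\varepsilon=d_{E_M}\alpha$ for some $\alpha\in\Omega^1(E_M)$. Extending $\alpha$ arbitrarily to a complex $1$-form $\tilde\alpha$ on $U_x$, the form $F=d\tilde\alpha$ is closed and restricts to $\varepsilon$ on $E_M$.

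Write $F=B+i\,\Omega$ with $B,\Omega$ real closed $2$-forms. The $B$-field transformation by $B$ (in the sign convention of \eqref{L-B}) replaces $\varepsilon$ by $i\,\Omega|_{E_M}$. We may also discard the components of $\Omega$ involving $dz$'s: modulo the annihilator of $E_M$, which does not affect $L(E_M,\cdot)$, we can add closed $(2,0)$-type corrections. After this we may assume $\varepsilon=i\,\Omega$ with $\Omega|_{\Delta_M}=\omega$ leafwise symplectic.

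\textbf{Step 4: leafwise Darboux with parameters.} Finally apply Moser's method to the family of symplectic forms $\omega_z$ on the local leaves $\{z=\mathrm{const}\}$, keeping the diffeomorphisms fibred over the $z$-coordinates. This is essentially Weinstein's splitting for the regular Poisson structure $\operatorname{Im}\varepsilon$, with closedness of $\Omega$ used to control the transverse dependence. The result is coordinates in which $\omega=\omega_0$, and hence, after one more closed $B$-field, $L_M=L(\omega_0)\times\big(T^{1,0}\C^k\oplus(T^{0,1}\C^k)^*\big)$. This is the product GC structure on $U_1\times U_2$.
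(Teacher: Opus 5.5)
The paper does not prove this statement; it quotes Gualtieri, so your outline has to stand on its own. Step 1, the closed extension in Step 3, and the fibred Moser argument are fine. Step 2 is also easier than you expect. Since $E_M\supset\Delta_M\otimes\C$ and $E_M$ is involutive, $E_M/(\Delta_M\otimes\C)$ is invariant along the leaves and descends to an integrable almost complex structure on the local leaf space. So Frobenius plus the ordinary Newlander--Nirenberg theorem (your fallback) already give $E_M=\spn\{\partial_{p_j},\partial_{\bar z_l}\}$, with no parameters needed.

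The gap is the final clause ``after one more closed $B$-field''. After Step 4 you have $\varepsilon=i\,\Omega'|_{E_M}$, where $\Omega'$ is the pull-back of $\Omega$, real and closed, with $\Omega'|_{\Delta_M}=\omega_0$. Set $\sigma:=\Omega'-\omega_0$. It vanishes on the leaves, but in general it still has components of type $dp\wedge d\bar z$ and $d\bar z\wedge d\bar z$, and these do not vanish on $E_M$. So $L(E_M,i\Omega'|_{E_M})$ is not yet the product. You need a \emph{real closed} $2$-form $B'$ with $B'|_{E_M}=i\,\sigma|_{E_M}$, and you give no reason why one exists.

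Your remark in Step 3 does not supply it. Components containing some $dz_l$ vanish on $E_M$ anyway. The relevant components cannot in general be removed before the leafwise normalization, by any choice of closed extension of $\varepsilon$. Suppose the imaginary part of a closed extension had no $dp\wedge d\bar z$ or $d\bar z\wedge d\bar z$ part, hence by reality no $dp\wedge dz$ or $dz\wedge dz$ part. Then closedness would force the leafwise forms $\omega_z$ to be independent of $z$ in the Step 2 coordinates.

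The missing step is short. Work on a ball times a polydisc. The radial homotopy $(p,z)\mapsto(tp,z)$ together with the Poincar\'e lemma on the polydisc gives $\sigma=d\beta$ with $\beta$ real and vanishing on the leaves, i.e.\ $\beta=\sum_l(g_l\,dz_l+\bar g_l\,d\bar z_l)$. This holds because the homotopy operator only contracts $\sigma$ with the leafwise Euler field $\sum_j p_j\partial_{p_j}$, and the result vanishes on leaves because $\sigma$ does.

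Now put $\nu:=i\sum_l(\bar g_l\,d\bar z_l-g_l\,dz_l)$. It is real, and $\nu|_{E_M}=i\beta|_{E_M}$ because the $dz_l$ vanish on $E_M$. So $B':=d\nu$ is real and closed with $B'|_{E_M}=d_{E_M}(i\beta|_{E_M})=i\,\sigma|_{E_M}$. The $B$-transform by $B'$ turns $\varepsilon$ into $i\,\omega_0|_{E_M}$. This is the product structure, up to the harmless conventions $z\leftrightarrow\bar z$ and the sign of $\omega_0$ relative to Examples~\ref{complx eg} and~\ref{symplectic eg}. This is exactly where closedness of $\Omega$ on all of $U_x$ enters; the Moser step itself needs only leafwise closedness.
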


\subsection{Generalized contact structures}
This subsection revisits some basic concepts in generalized contact geometry. We follow \cite{sekiya} for most of the definitions, results and notational conventions.
 \begin{definition}(cf. \cite{sekiya})\label{cntct def}
     A \textit{generalized almost contact structure} on a smooth (odd-dimensional) manifold $M$ is a triple $(\Phi,E_{\pm})$ where $\Phi\in\en(TM\oplus T^*M)$ and $E_{\pm}\in C^\infty(TM\oplus T^*M)$, which satisfy
     \begin{itemize}
     \setlength\itemsep{0.5em}
         \item $\Phi+\Phi^*=0$, $\langle E_+,E_-\rangle=\frac{1}{2}$, and $\langle E_{\pm},E_{\pm}\rangle=0$;
         \item $\Phi^2=-I_{(TM\oplus T^*M)}+E_+\otimes E_-+E_-\otimes E_+$.
     \end{itemize}
 where
$$E_+\otimes E_-+E_-\otimes E_+=
\begin{pmatrix}
    \eta_+\otimes X_-+\eta_-\otimes X_+ & X_+\otimes X_-+X_-\otimes X_+\\
    \eta_+\otimes\eta_-+\eta_-\otimes\eta_+ & X_+\otimes\eta_-+X_-\otimes\eta_+
\end{pmatrix}\,\,\,\text{if}\,\,\,E_{\pm}=X_{\pm}+\eta_{\pm}\,.$$ In other words, this means that, for all $X+\xi\in C^{\infty}(TM\oplus T^*M)$,
$$\Phi(\Phi(X+\xi))=-(X+\xi)+2\left(\langle E_-,X+\xi\rangle E_++\langle E_+,X+\xi\rangle E_-\right)\,.$$
We refer to the triple $(M, \Phi, E_{\pm})$ as a generalized almost contact manifold, or simply to $M$ when the generalized almost contact structure is understood from the context.
 \end{definition}
\begin{remark}\label{imp rmk2}
~
\begin{enumerate}
\setlength\itemsep{0.5em}
    \item Note that $\{X_+,X_-\}$ is a linearly independent set in Definition \ref{cntct def} whenever $X_+,X_-$ both are nowhere vanishing. To see this, if possible let, there exist nowhere vanishing smooth function $f:M\longrightarrow\R$ such that $X_+=f\,X_-$ on $M$. By assumption $\langle E_{\pm},E_{\pm}\rangle=0$, we have
    $$f\eta_+(X_-)=0\quad\text{and}\quad \frac{1}{f}\eta_-(X_+)=0\,,$$ implying $\eta_+(X_-)=\eta_-(X_+)=0$ as $f\not\equiv 0$. It contradicts the assumption that $\eta_+(X_-)+\eta_-(X_+)=1$. Similarly $\{\eta_+,\eta_-\}$ is also a linearly independent set whenever $\eta_+,\eta_-\not\equiv 0$.
\item By a simple computation, we can see that $\Phi^3=-\Phi$, $\Phi(E_{\pm})=0$ (cf. \cite[Lemma 3.1]{sekiya}) and that, $\langle E_\pm,\Phi(X+\xi)\rangle=0$ for all $X+\xi\in C^\infty(TM\oplus T^{*}M)$.
\end{enumerate}
\end{remark}
Let $L_{E_{\pm}}$ be real line bundle, generated by $E_{\pm}$, respectively. By Definition \ref{cntct def}, we get $\Phi^3+\Phi=0$ implying that $\Phi$ has three eigenvalues, namely $0,\pm i$. Then the corresponding eigenbundles of $0,\pm i$  are as follows, respectively.
\begin{align*}
    &\ker(\Phi)=L_{E_+}\oplus L_{E_-}\subset(TM\oplus T^{*}M)\,;\\
    &E^{(1,0)}:=\{e-i\Phi(e)\,|\,\langle e,E_{\pm}\rangle=0\,\&\,e\in(TM\oplus T^{*}M)\}\subset(TM\oplus T^{*}M)\otimes\C\,;\\
    &E^{(0,1)}:=\{e+i\Phi(e)\,|\,\langle e,E_{\pm}\rangle=0\,\&\,e\in(TM\oplus T^{*}M)\}\subset(TM\oplus T^{*}M)\otimes\C\,.
\end{align*}
Set 
\begin{equation}\label{contact dirac}
    L_{\pm}:=(L_{E_{\pm}}\otimes\C)\oplus E^{(1,0)}
\end{equation}
\begin{prop}(\cite[Lemma 3.3]{sekiya}) 
    The complex vector bundles $E^{(1,0)},E^{(0,1)},L_{\pm},\overline{L_{\pm}}$ are all isotropic subbundles of $(TM\oplus T^{*}M)\otimes\C$ with respect to the bilinear form in \eqref{bilinear}. In particular, the subbundles $L_{\pm},\overline{L_{\pm}}$ are maximal isotropic.
\end{prop}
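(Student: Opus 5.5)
The plan is to work pointwise with the eigenbundle decomposition of $\Phi$ and reduce isotropy to the two algebraic facts in Remark \ref{imp rmk2}(2): $\Phi^*=-\Phi$ and $\langle E_\pm,\Phi(\cdot)\rangle=0$. Along the way we show that $E^{(1,0)}$, $E^{(0,1)}$, $L_{\pm}$, $\overline{L_\pm}$ are genuine (constant rank) subbundles.

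\textbf{Subbundles.} Let $V:=\{e\,|\,\langle e,E_\pm\rangle=0\}$ be the real orthogonal complement of $\spn\{E_+,E_-\}$. Since $\langle E_+,E_-\rangle=\tfrac12$ and $\langle E_\pm,E_\pm\rangle=0$, the pairing on $\spn\{E_+,E_-\}$ is nondegenerate of split signature. Hence $TM\oplus T^*M=\spn\{E_+,E_-\}\oplus V$ orthogonally, and $V$ is a subbundle of rank $2n$ when $\dim M=2n+1$.

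For $e\in V$ the formula for $\Phi^2$ gives $\Phi^2e=-e$. Moreover $\Phi(V)\subset V$, since $\langle E_\pm,\Phi e\rangle=0$. So $\Phi|_V$ is a complex structure on $V$, and $E^{(1,0)}$, $E^{(0,1)}$ are exactly its $\pm i$-eigenbundles in $V\otimes\C$. Each therefore has constant complex rank $n$, and $E^{(0,1)}=\overline{E^{(1,0)}}$. Consequently $L_\pm=(L_{E_\pm}\otimes\C)\oplus E^{(1,0)}$ are subbundles of rank $n+1$, and the sum is direct because $\Phi E_\pm=0$ while $\Phi=i$ on $E^{(1,0)}$.

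\textbf{Isotropy.} Extend $\langle\cdot,\cdot\rangle$ complex-bilinearly. Take $e,f\in V$. Using $\langle\Phi a,b\rangle=-\langle a,\Phi b\rangle$ and $\Phi^2=-1$ on $V$,
$$\langle e-i\Phi e,\,f-i\Phi f\rangle=\langle e,f\rangle-\langle\Phi e,\Phi f\rangle-i\big(\langle\Phi e,f\rangle+\langle e,\Phi f\rangle\big)=\langle e,f\rangle+\langle e,\Phi^2 f\rangle-0=0.$$
So $E^{(1,0)}$ is isotropic, and by conjugation so is $E^{(0,1)}$.

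Next, $E_\pm$ pairs to zero with all of $V\otimes\C$, hence with $E^{(1,0)}$. Combined with $\langle E_\pm,E_\pm\rangle=0$, this shows $L_\pm$ is isotropic, and by conjugation (the $E_\pm$ are real) so is $\overline{L_\pm}$.

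\textbf{Maximality.} The ambient complex bundle has rank $2(2n+1)$ and a nondegenerate symmetric form, so a maximal isotropic subspace has dimension $2n+1$. But $L_\pm$ only has rank $n+1$ by the count above, so maximality as stated does not follow from this decomposition. This rank count is therefore the step needing care.

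I would recheck the intended definition in \cite{sekiya}. There, presumably, $E^{(1,0)}$ is the $+i$-eigenbundle of $\Phi$ on the complement of $\ker\Phi$, and $\ker\Phi$ has rank $2$ inside a bundle of rank $2(2n+1)$. Then $V$ has rank $4n$ and $E^{(1,0)}$ has rank $2n$. That gives $\operatorname{rank} L_\pm=2n+1$, which equals half the rank of the ambient bundle. With this corrected count the argument above goes through verbatim: $\Phi|_V$ is a complex structure on a rank-$4n$ real bundle, so each eigenbundle has complex rank $2n$.

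Thus maximality follows from isotropy plus $\operatorname{rank}_\C L_\pm=\tfrac12\operatorname{rank}_\C\big((TM\oplus T^*M)\otimes\C\big)$, since the form is nondegenerate. The same holds for $\overline{L_\pm}$.
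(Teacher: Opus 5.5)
Your argument is correct and is essentially the standard proof. The paper gives no proof of its own and cites Sekiya's Lemma 3.3, which is this computation: $\Phi|_V$ is a complex structure on $V=\spn\{E_+,E_-\}^\perp$; isotropy of $E^{(1,0)}$ follows from $\Phi^*=-\Phi$ and $\Phi^2=-1$ on $V$; and $E_\pm\perp V$ takes care of $L_\pm$.

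The one real flaw is the rank count in your first paragraph, and you then misdiagnose it. The paper's definition of $E^{(1,0)}$ does not need rechecking, because it already is the $+i$-eigenbundle of $\Phi$ on $V$. The mistake is purely arithmetic. The ambient bundle $TM\oplus T^*M$ has real rank $2(2n+1)=4n+2$, so $V$ has real rank $4n$, not $2n$. Consequently:
\begin{itemize}
\item $E^{(1,0)}$ and $E^{(0,1)}$ each have complex rank $2n$, not $n$;
\item $L_\pm$ and $\overline{L_\pm}$ have complex rank $2n+1$, not $n+1$.
\end{itemize}
Once you correct this in the first paragraph, maximality follows immediately from isotropy, nondegeneracy of the pairing, and $2n+1=\tfrac12(4n+2)$. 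The discussion of a possibly different intended definition should then be deleted.
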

\begin{definition}(\cite[Definition 3.2]{sekiya})\label{def:contct}
 A generalized almost contact structure $(\Phi,E_{\pm})$  on $M$ is called a \textit{generalized contact structure} if either of the maximal isotropic subbundles $L_{\pm}$ in \eqref{contact dirac} is involutive with respect to the Courant bracket \eqref{bracket}. It is called a \textit{strong generalized contact structure} if both $L_+$ and $L_-$ are Courant involutive. If, in addition, a strong generalized contact structure satisfies $[E_+,E_-]=0$, then it is called a normal generalized contact structure. 
 A \textit{generalized contact manifold} is a generalized almost contact manifold $(M, \Phi, E_{\pm})$ such that $(\Phi, E_{\pm})$ is a generalized contact structure on $M$.
\end{definition}
The following proposition, due to Gomez et al. \cite{gomez}, provides a necessary and sufficient condition for a generalized almost contact structure to be strong.
\begin{prop}\label{strng}
    A generalized almost contact structure $(\Phi, E_{\pm})$ on $M$ is strong if and only if  $(\Phi, E_{\pm})$ satisfies $[C^\infty(L_\pm), C^\infty(E^{(1,0)})]\subseteq C^\infty(E^{(1,0)})$.
\end{prop}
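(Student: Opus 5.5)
The plan is to read the condition in the statement as the pair of inclusions $[C^\infty(L_+),C^\infty(E^{(1,0)})]\subseteq C^\infty(E^{(1,0)})$ and $[C^\infty(L_-),C^\infty(E^{(1,0)})]\subseteq C^\infty(E^{(1,0)})$, and to prove both implications of the equivalence with $L_+$ and $L_-$ being Courant involutive. Throughout I will use three facts.

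First, the splitting $L_\pm=(L_{E_\pm}\otimes\C)\oplus E^{(1,0)}$ from \eqref{contact dirac}.

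Second, the isotropy relations $\langle E_\pm,E_\pm\rangle=0$ and $\langle E_\pm,E^{(1,0)}\rangle=0$, together with the maximal isotropy of $L_\pm$ (Sekiya's Lemma 3.3 quoted above).

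Third, the standard Leibniz rule
$$[a,fb]=f[a,b]+(\pr_{TM}(a)f)\,b-\langle a,b\rangle\,df,$$
which follows from \eqref{bracket}; skew-symmetry then gives the corresponding rule in the first slot.

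I will also need $E^{(1,0)}=L_+\cap L_-$. The inclusion $\supseteq$ is clear. For the reverse, suppose $aE_++e=bE_-+e'$ with $e,e'\in E^{(1,0)}$. Then $aE_+-bE_-$ lies in $\ker\Phi\cap E^{(1,0)}=0$. Since $E_+$ and $E_-$ are independent (because $\langle E_+,E_-\rangle=\tfrac12$), this forces $a=b=0$.

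\textbf{($\Leftarrow$).} Assume both inclusions. Fix a sign and write sections of $L_+$ as $fE_++e$ with $e\in C^\infty(E^{(1,0)})$; the argument for $L_-$ is identical. There are three kinds of brackets to check.

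Brackets of the form $[e_1,e_2]$ and $[fE_+,e]$ have one entry in $L_+$ and the other in $E^{(1,0)}$. By hypothesis they lie in $E^{(1,0)}\subset L_+$.

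For $[fE_+,gE_+]$, apply the Leibniz rule in both slots. The term $fg[E_+,E_+]$ vanishes by skew-symmetry, and every $df$ or $dg$ term carries a factor $\langle E_+,E_+\rangle=0$. What remains is a function multiple of $E_+$, so this bracket lies in $L_+$.

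Hence $L_+$ and $L_-$ are both involutive, and the structure is strong.

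\textbf{($\Rightarrow$).} Assume $L_+$ and $L_-$ are involutive, and take $s=fE_++e''\in C^\infty(L_+)$ and $e\in C^\infty(E^{(1,0)})$. Involutivity of $L_+$ already gives $[s,e]\in L_+$. Since $E^{(1,0)}=L_+\cap L_-$, it remains to show $[s,e]\in L_-$. I split the bracket into two pieces.

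The piece $[e'',e]$ lies in $L_-$, because both entries lie in $E^{(1,0)}\subset L_-$ and $L_-$ is involutive.

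For the other piece, the Leibniz rule and $\langle E_+,e\rangle=0$ give $[fE_+,e]=f[E_+,e]-(\pr_{TM}(e)f)E_+$. By involutivity of $L_+$, $[E_+,e]=cE_++e_1$ with $e_1\in E^{(1,0)}$. So it suffices to show that the total $E_+$-coefficient vanishes, that is,
$$fc-\pr_{TM}(e)f=0.$$
Pairing with $E_-$ shows that this is equivalent to $\langle[fE_+,e],E_-\rangle=0$.

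\textbf{Main obstacle.} The key step is proving $\langle[fE_+,e],E_-\rangle=0$. The tool is the first identity in \eqref{eqcc}, used in two ways.

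\emph{Pairing with $E^{(1,0)}$.} Pairing against $e'\in E^{(1,0)}$ is harmless: applying the identity with $s_1=E_+$ gives $\langle[E_+,e],e'\rangle=-\langle e,[E_+,e']\rangle=0$, by isotropy of $L_+$.

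\emph{Pairing with $E_-$.} Applying the identity with $s_1=e$ to $\langle E_+,E_-\rangle=\tfrac12$ gives $\langle[e,E_+],E_-\rangle=-\langle E_+,[e,E_-]\rangle$. This ties the $E_+$-coefficient of $[e,E_+]$ to minus the $E_-$-coefficient of $[e,E_-]$.

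To kill this coefficient, I will first check that $e\mapsto\langle[e,E_+],E_-\rangle$ is $C^\infty(M)$-linear. This holds because the extra term produced by the Leibniz rule is proportional to $\langle e,E_-\rangle=0$, so the map is a tensor $\alpha\in(E^{(1,0)})^*$. I will then try to show $\alpha=0$ in two ways.

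\emph{Jacobiator identity.} Apply the second identity in \eqref{eqcc} to the triple $(e,E_+,E_-)$ and pair the result against elements of $E^{(1,0)}$ and against $E_\pm$. Here I use that $[E^{(1,0)},E^{(1,0)}]\subseteq L_+\cap L_-=E^{(1,0)}$, together with the conjugate involutivity of $\overline{L_\pm}$.

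\emph{Absorbing into $f$.} Failing that, I will check whether the term $\pr_{TM}(e)f$ can absorb $fc$. Concretely, I will verify whether $c=2\langle[E_+,e],E_-\rangle$ vanishes identically, testing this first on the model structures of Examples \ref{contact eg} and \ref{almost eg}.

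This is the step I expect to be the genuine difficulty of the argument.
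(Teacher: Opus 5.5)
\textbf{The ($\Rightarrow$) direction fails at the step you flag, and the step is false rather than merely unproved.}

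Your ($\Leftarrow$) argument is correct. So is the identification $E^{(1,0)}=L_+\cap L_-$, which shows that $E^{(1,0)}$ is involutive for strong structures; this is the only consequence the paper actually uses, in Example \ref{nrml1}. The paper itself gives no proof of the proposition and only cites Gomez--Talvacchia.

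The equation you need, $fc-\pr_{TM}(e)f=0$, must hold for every $f\in C^\infty(M)$, and it cannot. If $\pr_{TM}$ vanished on $E^{(1,0)}$, it would also vanish on $\overline{E^{(1,0)}}$. That would put a rank-$4n$ bundle inside $T^*M\otimes\C$, which has rank $2n+1$. So for $n\ge 1$ some $e\in E^{(1,0)}$ has $\pr_{TM}(e)\neq 0$ at some point $p$. Choose $f$ with $f(p)=0$ and $(\pr_{TM}(e)f)(p)\neq 0$. By your own formula $[fE_+,e]=f[E_+,e]-(\pr_{TM}(e)f)E_+$, the $E_+$-component of $[fE_+,e]$ at $p$ is nonzero. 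Hence the inclusion $[C^\infty(L_\pm),C^\infty(E^{(1,0)})]\subseteq C^\infty(E^{(1,0)})$, read literally, fails for every strong structure with $\dim M\ge 3$, for instance the normal almost contact ones of Example \ref{almost eg}. Neither the Jacobiator route nor the ``absorbing into $f$'' route can close this step.

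\textbf{Even the frame-level version ($f\equiv 1$) is not implied by strongness.} Apply the first identity of \eqref{eqcc} to $s_1=E_+$, $s_2=e$, $s_3=E_-$. It is valid here because $\langle E_+,e\rangle=0$ and $\langle E_+,E_-\rangle$ is constant. It gives $c(e)=2\langle[E_+,e],E_-\rangle=-2\langle e,[E_+,E_-]\rangle$. Involutivity of $L_\pm$ does not force $[E_+,E_-]$ to be orthogonal to $E^{(1,0)}$.

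Here is a counterexample on $\R^3$ with coordinates $(x,y,t)$ and $f=f(t)$. Take $E_+=\partial_t$, $E_-=dt+f\,\partial_x$, and $E^{(1,0)}=\spn\{e_1,e_2\}$ with $e_1=\partial_x+i\,dy$ and $e_2=\partial_y+if\,\partial_t-i\,dx$. This bundle is isotropic, orthogonal to $E_\pm$, and meets its conjugate only in $0$, so it determines $\Phi$. A direct computation gives
\[
[e_1,e_2]=[E_+,e_1]=[E_-,e_1]=0,\qquad [E_+,e_2]=if'E_+,\qquad [E_-,e_2]=-if'E_- .
\]
So $L_+$ and $L_-$ are both involutive and the structure is strong, yet $[E_+,e_2]\notin E^{(1,0)}$ whenever $f'\neq 0$.

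\textbf{What your computations do establish.} For a strong structure, $[E_\pm,E^{(1,0)}]\subseteq E^{(1,0)}$ holds exactly when $\langle E^{(1,0)},[E_+,E_-]\rangle=0$, for example in the normal case $[E_+,E_-]=0$. Strongness is equivalent to $E^{(1,0)}$ being involutive together with $[E_\pm,E^{(1,0)}]\subseteq L_\pm$. The proposition would have to be restated along these lines to be provable.
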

\begin{example}\label{B-contact}
    Let $(M, \Phi, E_{\pm})$ be a generalized almost contact manifold then $B$-field transformation of $(\Phi, E_{\pm})$, namely $(e^B\Phi e^{-B},e^B E_{\pm})$ yields another generalized almost contact structure on $M$; see \cite[Lemma 3.2]{sekiya}. Here $e^{B}$ as defined in \eqref{B transformation}. As $B\in\Omega^2(M)$ is a real closed form, it preserve the Courant bracket. Thus whenever $(M, \Phi, E_{\pm})$ is a generalized contact manifold, so is $(M,e^B\Phi e^{-B},e^B E_{\pm})$. Moreover, even if $B$ is not closed, we obtain only that $(e^B\Phi e^{-B},e^B E_{\pm})$ defines a generalized almost contact structure. In this case, however, $(e^B\Phi e^{-B},e^B E_{\pm})$ is a generalized contact structure with respect to the $(-dB)$-twisted Courant bracket; see Definiton \ref{bracket}.
\end{example}
\begin{example}(\cite{pw,sekiya})\label{contact eg}
Let $(M,\xi,\alpha)$ be a cooriented contact manifold where $\alpha$ is the contact form and $\xi$ denotes its associated Reeb vector field. For $\beta,\beta'\in T^*M$, define a bivector $\pi$ by
$$\pi(\beta,\beta'):=d\alpha(\delta^{-1}(\beta),\delta^{-1}(\beta'))\,,$$ where $\delta:TM\rightarrow T^*M$ is an isomorphism, defined by $$\delta(X)=i_Xd\alpha-\alpha(X)\alpha\,.$$ Then, we have a generalized almost contact structure by setting
$$E_+=\alpha,\,\,E_-=\xi,\,\,\Phi=\begin{pmatrix}
    0    &\pi\\
    d\alpha    &0
\end{pmatrix}: TM\oplus T^{*}M\longrightarrow TM\oplus T^{*}M\,.$$
In fact, $(M,\xi,\alpha)$ is a generalized contact manifold which is not strong; see \cite[Proposition~3.1]{pw}.
\end{example}
\begin{example}(\cite{pw})\label{almost eg}
Let $(M^{2n+1},\varphi,\xi,\alpha)$ be an almost contact manifold, satisfying
$$\varphi^2=-I_{TM}+\alpha\otimes\xi\quad\text{and}\quad\alpha(\xi)=1\,,$$ where $\varphi$ is a type $(1,1)$-tensor. Then the generalized almost contact structure is given by $$E_+=\alpha,\,\,E_-=\xi,\,\,\Phi=\begin{pmatrix}
    \varphi    &0\\
     0   &-\varphi^*
\end{pmatrix}: TM\oplus T^{*}M\longrightarrow TM\oplus T^{*}M\,.$$
By \cite[Theorem 3.4]{wade12}, $(\Phi,E_\pm)$ is a generalized contact structure for which $L_-$ is Courant involutive if and only if  $N_\varphi=-\xi\otimes d\alpha$ and $\mathcal{L}_\xi\varphi=0$, where, for $X,Y\in C^{\infty}(TM)$, $N_\varphi$ is defined by
$$N_\varphi(X,Y)=[\varphi(X),\varphi(Y)]+\varphi^2([X,y])-\varphi([\varphi(X),Y]+[X,\varphi(Y)])\,.$$
In particular, if $(M^{2n+1},\varphi,\xi,\alpha)$ is a normal almost contact manifold (cf. \cite[Chapter~6]{blair}), then $(M,\Phi,E_\pm)$ is a normal generalized contact manifold; see \cite[Proposition 3.4]{pw}.
\end{example}
\begin{example}\label{cosym eg}
Let $(M^{2n+1},\alpha,\theta)$ be an almost cosymplectic manifold where $\alpha\in\Omega^1(M),\theta\in\Omega^2(M)$ hold the condition $\alpha\wedge\theta^n\neq 0$ on $M$. Then $M$ admits a generalized almost contact structure $(\Phi,E_\pm)$ defined by
$$E_+=\alpha,\,\,E_-=\xi,\,\,\Phi=\begin{pmatrix}
    0    &\pi\\
    \theta    &0
\end{pmatrix}: TM\oplus T^{*}M\longrightarrow TM\oplus T^{*}M\,,$$ where $\xi$ is the unique vector field associated with $\alpha$ such that $\alpha(\xi)=1$ and $\theta(\xi)=0$, and $\pi$ is defined as in Example \ref{contact eg}, with $d\alpha$ replaced by $\theta$. By \cite[Theorem 3.4]{wade12}, $(M,\Phi,E_\pm)$ is a generalized contact manifold if and only if $\theta$ is closed.
\vspace{0.3em}

When $d\alpha=d\theta=0$, it is called a cosymplectic manifold; see \cite[Section~6.5]{blair}. So, a cosymplectic manifold $(M^{2n+1},\alpha,\theta)$ always admits the strong generalized contact structure $(\Phi,E_\pm)$ by \cite[Proposition 3.2]{pw}. 
\end{example}
Another natural example of generalized contact structures arises from product structures, as given in the following example.
\begin{example}\label{prodct}
Let $(M, \Phi, E_\pm)$ be a generalized contact manifold, and let $(N, \mathcal{J}_N)$ be a generalized complex (GC) manifold. Using the identification
$$T(M\times N)\oplus T^*(M\times N)\cong(TM\oplus T^*M)\oplus(TN\oplus T^*N)\,,$$
one can define a generalized contact structure $(\tilde{\Phi}, \tilde{E}_\pm)$ (cf. \cite{gomez}) on $M\times N$ as follows:
\begin{itemize}
\setlength\itemsep{0.3em}
    \item The endomorphism $\tilde{\Phi}$ is given by $$\tilde{\Phi}:= \Phi \oplus \mathcal{J}_N\,,$$ acting componentwise on $T(M\times N)\oplus T^*(M \times N)$.
    \item The sections $\tilde{E}_\pm$ are obtained by lifting $E_\pm$ via the natural projection $M \times N\to M$ together with a inclusion $M \hookrightarrow M\times N$. 
\end{itemize}
\end{example}
\begin{example}\label{prodct1}
   $\R$ admits a generalized contact structure $(\Phi, E_\pm)$ where $\Phi\equiv0$, and $(E_+,E_-)=(dt,\frac{\partial}{\partial t})$. Then for any GC manifold $(N, \mathcal{J}_N)$, the triplet $(\mathcal{J}_N, dt,\frac{\partial}{\partial t})$ is a generalized contact structure on the product $N\times\R$ by Example \ref{prodct}. 
\end{example}
\begin{definition}(cf. \cite{pw})\label{def:p-w}
A generalized contact manifold $(M,\Phi,E_{\pm})$ is of \textit{Poon-Wade type} when at least one of the following holds:
\vspace{0.2em}
\begin{enumerate}
\setlength\itemsep{0.2em}
    \item $E_+ \in C^\infty(TM)$ and $E_- \in C^\infty(T^*M)$, with $L_+$ Courant involutive,
    \item $E_- \in C^\infty(TM)$ and $E_+ \in C^\infty(T^*M)$, with $L_-$ Courant involutive.
\end{enumerate}
In a similar manner, \textit{generalized almost contact manifolds of Poon-Wade type} is defined without the Courant involutiveness condition.
\end{definition}
\begin{example}
Note that the generalized contact structures in Example \ref{contact eg}, Example \ref{almost eg}, and Example \ref{cosym eg} are of Poon-Wade type. A natural way to construct a generalized contact structure that is not of Poon-Wade type is via a $B$-field transformation. More precisely, start with a generalized contact manifold $(M,\Phi,E_\pm)$ of Poon-Wade type, and consider a real closed $2$-form $B$ that does not vanish on the vector field associated with the Poon-Wade type generalized contact structure $(\Phi,E_\pm)$. The resulting generalized contact structure $(e^B\Phi e^{-B},e^B E_{\pm})$ is then no longer of Poon-Wade type.    
\end{example}
Now, consider the following four cases for any generalized almost contact manifold $(M,\Phi,E_{\pm})$ with $E_\pm=X_\pm+\eta_\pm$.
\vspace{0.2em}
\begin{mycases}
\setlength\itemsep{0.5em} 
    \item Either $X_\pm=0$ or $\eta_\pm=0$. This is not possible as $\eta_+(X_-)+\eta_-(X_+)=1$.
    \item $X_+=0,X_-\ne 0$. It follows that $\eta_+\ne 0$. However $\eta_-$ may either vanish or be nonzero. Let $B=\eta_-\wedge\eta_+$; note that $B$ may vanish. Then $(M,e^B\Phi e^{-B},e^B E_{\pm})$ defines a generalized almost contact structure of Poon-Wade type; see Example \ref{B-contact}. Moreover, if $(M,\Phi,E_{\pm})$ is a generalized contact structure, then $(M,e^B\Phi e^{-B},e^B E_{\pm})$ is also a generalized contact structure with respect to the twisted Courant bracket $[\cdot\,,\cdot]_{-dB}$.
    \item $X_-=0,X_+\ne 0$. This is similar to Case $2$.
    \item $X_+\ne 0,X_-\ne 0$. This implies that $\eta_\pm=0$ cannot occur. Consequently, there are only three possible choices, namely,
    $$(1)\,\,\eta_+\ne 0\,\,\text{and}\,\,\eta_-=0;\quad(2)\,\,\eta_-\ne 0\,\,\text{and}\,\,\eta_+=0;\quad(3)\,\,\eta_+\ne 0\,\,\text{and}\,\,\eta_-\ne 0\,.$$
The first two choices are similar, so it is sufficient to consider only the second and third. Let $\tilde{\pi}:=X_+\wedge X_-$ be a bivector, and consider the orthogonal transformation (with respect to \eqref{bilinear}), defined by
$$e^{\tilde{\pi}}=\begin{pmatrix} 
	       1 & \tilde{\pi}\\
	       0 & 1 \\
	    \end{pmatrix}: TM\oplus T^{*}M\longrightarrow TM\oplus T^{*}M\,.$$
This is also known as $\beta$-field transformation; see \cite[Example 2.2]{Gua}. Then we can see that $(e^{-\tilde{\pi}}\Phi e^{\tilde{\pi}},e^{-\tilde{\pi}} E_{\pm})$ is a generalized almost contact structure of Poon-Wade type, in the case of the second choice. However, in this case, even if $(\Phi,E_{\pm})$ defines a generalized contact structure, its $\beta$-transformation need not do so. Indeed, $\beta$-transformations are not, in general, symmetries of the Courant bracket. Thus we have proved the following.
\end{mycases}
\begin{prop}
   Let $(M,\Phi,E_{\pm})$ be a generalized almost contact manifold, excluding the case where both $X_\pm$ and $\eta_\pm$ are nonzero when $E_\pm=X_\pm+\eta_\pm$. Then $(\Phi,E_{\pm})$ is obtained from a generalized almost contact structure of Poon-Wade type via a $\beta$- or $B$-field transformation. Here $B$ is not necessarily closed form.
\end{prop}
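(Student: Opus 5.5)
The plan is to go through the case analysis just made and, in each admissible case, write down an explicit transformation. We then check two things: that the transformed structure is again generalized almost contact, and that it is of Poon-Wade type in the sense of Definition \ref{def:p-w} (ignoring involutivity).

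The basic tool is that any orthogonal automorphism $T$ of $(TM\oplus T^*M,\langle\cdot,\cdot\rangle)$ sends a generalized almost contact structure $(\Phi,E_\pm)$ to $(T\Phi T^{-1},TE_\pm)$. Indeed, $T\Phi T^{-1}$ stays skew because $T^*=T^{-1}$ for the pairing. The pairings $\langle TE_\pm,TE_\mp\rangle$ and $\langle TE_\pm,TE_\pm\rangle$ are preserved. The identity $\Phi^2=-I+E_+\otimes E_-+E_-\otimes E_+$ is conjugated to the same identity with $TE_\pm$, since $T(\langle E_-,T^{-1}\cdot\rangle E_+)=\langle TE_-,\cdot\rangle TE_+$. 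Both $e^B$ (any real $2$-form $B$, closed or not) and $e^{\tilde\pi}$ (any bivector) are orthogonal, so this covers both kinds of transformation. This is the content of Example \ref{B-contact}, and the $\beta$-case is identical.

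\textbf{Case 2} ($X_+=0$, $X_-\neq0$, hence $\eta_+\neq0$). Take $B=\eta_-\wedge\eta_+$. Then $e^BE_+=\eta_+$, since $i_0B=0$. Also
\[
e^BE_-=X_-+\eta_-+i_{X_-}(\eta_-\wedge\eta_+)=X_-+\eta_-+\eta_-(X_-)\eta_+-\eta_+(X_-)\eta_-.
\]
The isotropy $\langle E_-,E_-\rangle=0$ gives $\eta_-(X_-)=0$. The condition $\langle E_+,E_-\rangle=\tfrac12$ gives $\eta_+(X_-)=1$. Hence $e^BE_-=X_-$. So $E_+$ becomes a form and $E_-$ a vector field, which is the Poon-Wade shape. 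Applying $e^{-B}$ recovers the original structure. The sign convention in \eqref{B transformation} only affects whether one uses $B$ or $-B$; I would fix it at this point. \textbf{Case 3} is symmetric.

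\textbf{Case 4, second choice} ($X_\pm\neq0$, $\eta_+=0$, $\eta_-\neq0$). Take $\tilde\pi=X_+\wedge X_-$. Then $e^{-\tilde\pi}E_+=X_+-\tilde\pi(0)=X_+$. Also
\[
e^{-\tilde\pi}E_-=X_- +\eta_- -\tilde\pi(\eta_-)=X_-+\eta_--\big(\eta_-(X_+)X_- - \eta_-(X_-)X_+\big),
\]
up to the sign convention for contracting a bivector. The isotropy $\langle E_-,E_-\rangle=0$ gives $\eta_-(X_-)=0$, and $\langle E_+,E_-\rangle=\tfrac12$ gives $\eta_-(X_+)=1$. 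So the vector part cancels exactly and $e^{-\tilde\pi}E_-=\eta_-$. The result has $E_+$ a vector field and $E_-$ a form, so it is of Poon-Wade type. The first choice is the same with the roles of $\pm$ swapped. Case 1 cannot occur, and the third choice of Case 4 is excluded by hypothesis, so the case list is exhausted.

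The only delicate point is keeping the signs of $i_XB$ and $\tilde\pi(\xi)$ consistent with the paper's conventions, so that the cancellations actually happen. If a sign comes out wrong, replacing $B$ by $-B$ or $\tilde\pi$ by $-\tilde\pi$ fixes it. One also needs $\eta_\pm$ and $X_\pm$ to be globally nonzero (or zero) so that each case holds uniformly on $M$. I would state this as implicit in the case split.
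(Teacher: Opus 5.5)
Your proposal is correct and follows the paper's own argument. It uses the same case split: Case 1 is impossible, Cases 2--3 are handled by $e^{B}$ with $B=\eta_-\wedge\eta_+$ (resp.\ its counterpart), the first two choices of Case 4 by the $\beta$-transform with $\tilde\pi=X_+\wedge X_-$, and the third choice of Case 4 is excluded by hypothesis. You also make explicit two points the paper leaves implicit: orthogonal automorphisms transport generalized almost contact structures, and the cancellations $e^{B}E_-=X_-$ and $e^{-\tilde\pi}E_-=\eta_-$ follow from isotropy together with $\langle E_+,E_-\rangle=\tfrac12$.
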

For the third choice, it is possible that the generalized almost contact structure cannot be derived from a generalized almost contact structure of Poon-Wade type. The following example demonstrates this in a simple case.
\begin{example}(New generalized contact structures on $\R^3$ and $\T^3$)\label{new eq}
    Consider $\R^3$ with coordinates $(x,y,z)$. Without loss of generality choose $(x,z)$, and define
    $$E_+:=\frac{\partial}{\partial z}+dx\,\,\,\text{and}\,\,\,E_-:=\frac{1}{2}(dz+\frac{\partial}{\partial x})\,.$$
It is clear that $\langle E_+,E_-\rangle=\frac{1}{2}$, and $\langle E_{\pm},E_{\pm}\rangle=0$. Let $V:=\spn\{E_\pm\}$, and consider its orthogonal complement (with respect to the bilinear form \eqref{bilinear}) $V^\perp$, defined by
$$V^\perp:=\big\{X+\xi\in\C^\infty(T\R^3\oplus T^*\R^3)\,|\,\langle X+\xi,E_\pm\rangle=0\big\}\,.$$ We now proceed to compute $V^\perp$. Let $X+\xi\in V^\perp$, and, for some $f_i,g_i\in C^\infty(\R^3)$ ($i=1,2,3$),
$$X=f_1\frac{\partial}{\partial x}+f_2\frac{\partial}{\partial y}+f_3\frac{\partial}{\partial z}\,,\,\,\text{and}\,\,\xi=g_1dx+g_2dy+g_3dz\,.$$
Then, $\langle X+\xi,E_\pm\rangle=0$ implies that $f_1+g_3=0$ and $f_3+g_1=0$, respectively. Consequently, $$V^\perp=\spn\left\{\frac{\partial}{\partial x}-dz,\frac{\partial}{\partial y},\frac{\partial}{\partial z}-dx,dy\right\}\,.$$
Let $\omega:=dy\wedge dx$, and $\omega^{-1}:=\frac{\partial}{\partial x}\wedge\frac{\partial}{\partial y}$. Consider the endomorphism $\phi$ on $T\R^3\oplus T^*\R^3$, defined as
$$\phi=\begin{pmatrix}
    0    &-\omega^{-1}\\
    \omega    &0
\end{pmatrix}\,,\,\,\,\text{and the projection}\,\,\,\pr:T\R^3\oplus T^*\R^3\rightarrow V^\perp\,.$$
It follows that
\begin{align*}
&(\pr\circ\phi)\left(\frac{\partial}{\partial x}-dz\right)=dy\,,(\pr\circ\phi)\left(\frac{\partial}{\partial y}\right)=\frac{1}{2}\left(\frac{\partial}{\partial z}-dx\right)\,;\\
&(\pr\circ\phi)\left(\frac{\partial}{\partial z}-dx\right)=-\frac{\partial}{\partial y}\,,\,\,\text{and}\,\,(\pr\circ\phi)(dy)=-\frac{1}{2}\left(\frac{\partial}{\partial x}-dz\right)\,.
\end{align*}
Therefore, \begin{align*}
&(\pr\circ\phi)^2\left(\frac{\partial}{\partial x}-dz\right)=-\frac{1}{2}\left(\frac{\partial}{\partial x}-dz\right)\,,(\pr\circ\phi)^2\left(\frac{\partial}{\partial y}\right)=-\frac{1}{2}\frac{\partial}{\partial y}\,;\\
&(\pr\circ\phi)^2\left(\frac{\partial}{\partial z}-dx\right)=-\frac{1}{2}\left(\frac{\partial}{\partial z}-dx\right)\,,\,\,\text{and}\,\,(\pr\circ\phi)^2(dy)=-\frac{1}{2}dy\,.
\end{align*}
Define
\begin{equation*}
\mathcal{J}:=\sqrt{2}(\pr\circ\phi)\,,\quad\text{and}\quad\Phi:=
  \begin{cases}
   \mathcal{J}& \text{on}\quad V^\perp\,;\\
    0 & \text{on}\quad V\,.
\end{cases}  
\end{equation*}
Note that $\Phi^2=-I$ on $V^\perp$. Let $X+\xi\in C^\infty(T\R^3\oplus T^*\R^3)$ such that $X+\xi=a_+E_++a_-E_-+b$ where $b\in V^\perp$. Then
\begin{align*}
&-(X+\xi)+2\left(\langle E_-,X+\xi\rangle E_++\langle E_+,X+\xi\rangle E_-\right)\\
    &=-(a_+E_++a_-E_-+b)+2\left(\langle E_-,a_+E_++a_-E_-+b\rangle E_++\langle E_+,a_+E_++a_-E_-+b\rangle E_-\right)\\
    &=-(a_+E_++a_-E_-+b)+(a_+E_++a_-E_-)\\
    &=-b=\Phi(\Phi(X+\xi))
\end{align*}
Hence $(\Phi,E_\pm)$ defines a generalized almost contact structure on $\R^3$ that is not of Poon-Wade type, even up to $\beta$- or $B$-transformations. In fact, it is straightforward to see that it is a generalized contact structure. Moreover, using suitable parameter on $E_\pm$ and modifying $\phi$ accordingly on $V^\perp$, one may obtain a family of such structures.
\medskip

Since $\phi,E_\pm$ and the basis of $V^\perp$ are translation invariant, the pair $(\Phi,E_\pm)$ descends to a generalized almost contact structure on the $3$-torus $\T^3=\R^3/\Z^3$ which is not of Poon-Wade type.
\end{example}
\begin{example}(New generalized almost contact structures on Heisenberg group)\label{new eq1}
Let $H_3$ denotes the three dimensional Heisenberg group. We choose a basis $\{X_1,X_2,X_3\}$ for its Lie algebra $\mathfrak{h}_3$ so that $[X_1,X_2]=-X_3$. Let $\{\alpha_1,\alpha_2,\alpha_3\}$ be the corresponding dual basis. Then $d\alpha_3=\alpha_1\wedge\alpha_2$. Consider  $$E_+:=X_1+\alpha_2\,\,\,\text{and}\,\,\,E_-:=\frac{1}{2}(X_2+\alpha_1)\,.$$
In a similar manner, as in Example \ref{new eq}, define an endomorphism $\Phi$ on $TH_3\oplus T^*H_3$ by 
\begin{align*}
&\Phi(E_\pm)=0\,,\Phi(X_1-\alpha_2)=\sqrt{2}\alpha_3\,,\,\Phi(X_3)=\frac{1}{\sqrt{2}}(X_2-\alpha_1)\,;\\
&\Phi(X_2-\alpha_1)=-\sqrt{2} X_3\,,\,\,\text{and}\,\,\Phi(\alpha_3)=-\frac{1}{\sqrt{2}}(X_1-\alpha_2)\,.
\end{align*}
So, $(\Phi,E_\pm)$ is a generalized almost contact structure on $H_3$. The corresponding $L_\pm$ (\eqref{contact dirac}) is of the form 
\begin{align*}
    L_\pm=\spn\left\{E_\pm,X_1-\alpha_2-\sqrt{2}i\alpha_3,X_2-\alpha_1+\sqrt{2}iX_3\right\}
\end{align*}
Since the Courant bracket between $X_3,\alpha_1,$ and $\alpha_2$ and any element among $X_1,X_2,X_3,\alpha_1,\alpha_2,$ and $\alpha_3$ are equal to zero, we have $$[E_+,X_2-\alpha_1+\sqrt{2}iX_3]=-X_3\,,\,\,\,\text{and}\,\,\,[E_-,X_1-\alpha_2-\sqrt{2}i\alpha_3]=\frac{1}{2}X_3+\frac{i}{\sqrt{2}}\alpha_1\,.$$ It follows that both the subbundles $L_+,L_-$ are not Courant involutive. Therefore, $(\Phi,E_\pm)$ does not define a generalized contact structure.
\end{example}
\section{Boothby-Wang construction}\label{sec3}
In this section, we provide a complete description of the Boothby-Wang construction for any generalized contact manifold $M$. Under mild conditions, we give a principal bundle structures on $M$ and establish a criteria for admitting generalized complex structures on its leaf spaces.
\medskip

Let $(M,\Phi,E_{\pm})$ be a generalized almost contact manifold. Then we can write $\Phi$ in the following form:
\begin{equation}\label{Phi}
\Phi=\begin{pmatrix}
    \phi & \pi\\
    \theta & -\phi^*
\end{pmatrix}:TM\oplus T^{*}M\longrightarrow TM\oplus T^{*}M\,,    
\end{equation}
where $\phi\in\en(TM)$, $\theta\in\Hom_{C^\infty(M)}(TM,T^*M)$ and $\pi\in\Hom_{C^\infty(M)}(T^*M,TM)\,.$ Using $\Phi^*=-\Phi$ (cf. Definition \ref{cntct def}), we get $\theta\in\Omega^{2}(M)$ and $\pi\in C^\infty(\wedge^{2}TM)$. Set
\begin{equation}\label{f-eta}
\begin{aligned}
    &R:=X_++X_-\quad\text{and}\quad \eta:=\eta_++\eta_-\,,\\
    &R':=X_--X_+\quad\text{and}\quad \eta':=\eta_+-\eta_-\,,
    \quad\text{where\, $E_{\pm}=X_{\pm}+\eta_{\pm}$}\,.\\
\end{aligned}
\end{equation}
By Definition \ref{cntct def}, it follows immediately that $\eta(R)=1=\eta'(R')$. Clearly, we obtain a canonical (integrable) foliation $\mathcal{R}$ (respectively, $\mathcal{R'}$) generated by the nowhere vanishing vector field $R$ (respectively, $R'$), and denote the corresponding leaf space by $M/\mathcal{R}$ (respectively, $M/\mathcal{R'}$). This follows because $\langle R\rangle\subset TM$ is a rank-$1$ distribution, and every one-dimensional distribution is integrable.

\begin{definition}\label{def:reg}
    A generalized (almost) contact structure $(\Phi,E_{\pm})$ on $M$ is called \textit{complete} if at least one of the associated vector fields $R$ or $R'$ (cf. \eqref{f-eta}), is complete. We call $(M,\Phi,E_{\pm})$ \textit{regular} if at least one of the foliations $\mathcal{R}$ or $\mathcal{R'}$ of $M$ into $R$-orbits or $R'$-orbits, respectively, is simple, i.e., the leaf space $M/\mathcal{R}$ or $M/\mathcal{R'}$ is a smooth manifold such that the corresponding canonical projection 
    \begin{equation}\label{psi}
      \Psi:M\longrightarrow M/\mathcal{R}\,\,\,(\text{respectively,}\,\,\,\Psi':M\longrightarrow M/\mathcal{R'})  
    \end{equation}
    is a surjective submersion, i.e., $\Psi$ (respectively, $\Psi'$) is a smooth fibration.
\end{definition}
\begin{remark}
    Since, $\mathcal{L}_{X_+}X_-=\mathcal{L}_{R}R'$, it follows that $R$ and $R'$ are complete vector fields if and only if $X_+$ and $X_-$ are, provided $[E_+,E_-]=0$.
\end{remark}

\subsection{Induced Principal bundle structures}\label{princi sec}
Let $(M^{2n+1},\Phi,E_{\pm})$ be a connected regular generalized almost contact manifold with the canonical foliation $\mathcal{R}$, induced by the associated vector field $R$ (cf. \eqref{f-eta}). It follows that the fibers of the fibration $M \xrightarrow{\Psi} M/\mathcal{R}$ are orbits of the nowhere vanishing vector field $R$ which are closed submanifolds in this case, and are diffeomorphic either to the circle $S^1$ ($\cong U(1)$) or to $\R$. On a compact $R$-orbit $M_q:=\Psi^{-1}(q)$ for $q\in M/\mathcal{R}$, the flow line, generated by $R$ is periodic with the minimal period $\psi(q)$. In particular, if $M$ is compact, then all $R$-orbits are circles, and in this case, we obtain the smooth function 
\begin{equation}
    \tilde{\psi}:M\to\R^+\,\,,\,y\mapsto\tilde{\psi}(y)\,
\end{equation} defined by assigning to each point $y\in M$ the period $\tilde{\psi}(y)$ of the flow line of the vector field $R$ through $y$; see \cite[Lemma 7.2.6]{gei08}. It follows that $\inf_{y\in M_q}(\tilde{\psi}(y))=\psi(q)$ for all $q\in M/\mathcal{R}$.
\medskip

Now assume that the fibration $\Psi:M \longrightarrow M/\mathcal{R}$ is a $S^1$-bundle and that $i_Fd\eta=0$. Consider a connected local trivialization $V\subset M/\mathcal{R}$ with coordinate $(x_i)\in\R^{2n}$. 
Using the standard covering map of $S^1$, $$\R\ni t\to[t]\in\R/\Z\cong S^1\,,$$
we can view functions on $V\times S^1$ as functions on $V\times\R$ that are $1$-periodic in the $\tau$-variable, where $(x_i,\tau)$ denotes the coordinates on $V\times\R$ lifted from $V\times S^1$. With this identification, the vector field $R$ and the $1$-form $\eta$ lift naturally to $V\times\R$, and still satisfy 
\begin{equation}\label{imp eq}
    \eta(R)=1,\quad i_Fd\eta=0\,.
\end{equation}
Let us write  $\eta$ and $R$ in coordinates $(x_i,\tau)$ as
\begin{equation}\label{eq1}
    \eta=f\,d\tau+\sum^{2n}_{i=1}g_i\,dx_i\,,\quad R=\frac{1}{f}\,\frac{\partial}{\partial\tau}\,,
\end{equation}
where $f$ and $g_i$'s are smooth functions on $V\times\R$, $1$-periodic in $\tau$. This is possible since $\eta(R)=1$, which implies that $f$ is nowhere vanishing. Note that
\begin{equation}\label{period}
    \psi(x)=\int^1_0f(x,\tau)\,d\tau
\end{equation}
is the minimal period of $R$ on $M_x$ for $x\in V$. From \eqref{eq1}, we get $$d\eta=\sum_i\left(\frac{\partial f}{\partial x_i}dx_i\wedge d\tau+\frac{\partial g_i}{\partial\tau}d\tau\wedge dx_i\right)+\sum^{2n}_{i,j=1}\frac{\partial g_i}{\partial x_j}dx_j\wedge dx_i\,.$$
Because $i_Rd\eta=0$, this implies that, for all $i$,

    $$\frac{1}{f}\left(\frac{\partial f}{\partial x_i}-\frac{\partial g_i}{\partial\tau}\right)dx_i=0\,,
    \implies\frac{\partial f}{\partial x_i}=\frac{\partial g_i}{\partial\tau}\,.$$
So we have \begin{equation}\label{d-eta}
    d\eta=\sum^{2n}_{i,j=1}\frac{\partial g_i}{\partial x_j}dx_j\wedge dx_i\,.
\end{equation}
Consequently, for $x=(x_i)$,
$$\frac{\partial\psi(x)}{\partial x_i}=\int_0^1\frac{\partial f(x,\tau)}{\partial x_i}\,d\tau=\int_0^1\frac{\partial g_i(x,\tau)}{\partial\tau}\,d\tau=g_i(x,1)-g_i(x,0)=0\,,$$ as $g_i$'s are $1$-periodic in $\tau$, and since $M$ is connected, $\gamma:=\psi(x)$ is constant for all $x\in M/\mathcal{R}$. It follows that $e^{tR}=id$ if and only if $t\in\gamma\cdot\Z$. In other words, $\Phi:M\longrightarrow M/\mathcal{R}$ is a principal $S^1$-bundle where $S^1$ is diffeomorphic to the quotient space $\R/\gamma\cdot\Z$.
\medskip

Let us change the coordinates in $V\times\R$ into $(x_i,t)$, parametrizing fibers of $V\times\R$ by trajectories of $\frac{1}{f}\,\frac{\partial}{\partial\tau}$ (cf. \eqref{eq1}), and get the following diffeomorphism.
\begin{equation}\label{para}
    \phi: V\times\R\longrightarrow V\times\R,\,\, \phi(x,\tau)=(x,t)\,\,\forall\,\,(x,\tau)\in V\times\R\,,
\end{equation} where $t:=t(x,\tau)=\int_0^\tau f(x,s)\,ds$. Then, by applying the diffeomorphism in \eqref{para}, the expressions for $\eta$ and $R$ can be rewritten in the coordinates $(x_i,t)$ as follows.\begin{equation}\label{eq2}
 R=\frac{\partial}{\partial t},\quad\eta=dt+\sum_ih_i\,dx_i\,,
\end{equation} for some smooth functions $h_i:V\times\R\longrightarrow\R$. Applying $i_Rd\eta=0$ again, we can see that for each $i$, $\frac{\partial h_i}{\partial t}=0$, implying $h_i=\Phi^*h'_i$ on $V\times\R$ for some smooth functions $h'_i:V\longrightarrow\R$ and so, we get
$$d\eta=\sum_{i,j}\frac{\partial h_i}{\partial x_j}(x)\,dx_j\wedge dx_i\,.$$ Thus, there exist an unique closed $2$-form $\omega_V\in\Omega^2(V)$ such that $d\eta=\Phi^*\omega_V$ on $V\times S^1$. In particular, $\omega_V=\sum_{i,j}\frac{\partial h'_i}{\partial x_j}(x)\,dx_j\wedge dx_i=d(\sum_i h'_i\,dx_i)$ on $V$. Since $V\times S^1$ is an arbitrary local trivialization of $\Phi$, globally there exist an unique closed $2$-form $\omega\in\Omega^2(M/\mathcal{R})$ such that $\omega|_V=\omega_V$ on any local trivialization $V$. Therefore, we have proved the following.
\begin{theorem}\label{thm1}
Let $(M,\Phi,E_{\pm})$ be a connected, regular generalized almost contact manifold with associated vector field $R$ and $1$-form $\eta$ as in \eqref{f-eta}, and suppose $i_Rd\eta=0$. Denote by $\mathcal{R}$ the one-dimensional foliation generated by $R$, and let $M/\mathcal{R}$ be the leaf space of $\mathcal{R}$. Assume the canonical projection $\Psi:M\longrightarrow M/\mathcal{R}$ (cf. \eqref{psi}) is a smooth fiber bundle whose typical fiber is $S^1$. Then
\vspace{0.2em}
\begin{enumerate}
\setlength\itemsep{0.5em}
    \item The flow generated by $R$ on $M$ is periodic, and all $R$-orbits have the same minimal period $\gamma$.
    \item The fibration $\Psi:M\longrightarrow M/\mathcal{R}$ yields a principal $S^1$-bundle structure on $M$, with $\eta$ as its connection $1$-form and curvature the closed $2$-form $\omega\in\Omega^2(M/\mathcal{R})$ such that $d\eta=\Phi^*\omega$. Here $S^1\cong \R/\gamma\cdot\Z\,.$
\end{enumerate}
\end{theorem}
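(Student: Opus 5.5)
The argument is local-to-global: the period function is controlled by the condition $i_Rd\eta=0$, the local coordinates are straightened so that $R=\partial_t$, and the resulting local data are glued into a global connection and curvature.

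\textbf{Step 1: local coordinates.} Cover $M/\mathcal{R}$ by connected trivializing opens $V$ with coordinates $(x_i)$. On each $V\times S^1$, lift to $V\times\R$ via the covering $\R\to\R/\Z$. Since $\eta(R)=1$, we can write $\eta=f\,d\tau+\sum_i g_i\,dx_i$ and $R=\frac1f\partial_\tau$ as in \eqref{eq1}, with $f$ nowhere vanishing and all functions $1$-periodic in $\tau$. The minimal period of $R$ on the fiber over $x$ is then $\psi(x)=\int_0^1 f(x,\tau)\,d\tau$, as in \eqref{period}.

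\textbf{Step 2: constancy of the period.} Expanding $i_Rd\eta=0$ gives $\partial_{x_i}f=\partial_\tau g_i$. Differentiating under the integral and using periodicity of $g_i$ yields $\partial_{x_i}\psi=g_i(x,1)-g_i(x,0)=0$. So $\psi$ is locally constant on $M/\mathcal{R}$. Since $M$ is connected, so is $M/\mathcal{R}$, hence $\psi\equiv\gamma$ is constant. This proves (1).

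\textbf{Step 3: the principal action.} The flow $e^{tR}$ then satisfies $e^{tR}=\mathrm{id}$ exactly when $t\in\gamma\Z$. Hence it defines a free action of $\R/\gamma\Z\cong S^1$ on $M$. This action preserves fibers and is transitive on each fiber, because fibers are $R$-orbits. Local triviality is inherited from the given bundle structure of $\Psi$, once the trivialization is rechosen as described next.

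\textbf{Step 4: equivariant charts.} Change coordinates by $t=\int_0^\tau f(x,s)\,ds$, which is a diffeomorphism $V\times\R\to V\times\R$ and descends to $V\times\R/\gamma\Z$ because $\psi\equiv\gamma$. In these coordinates $R=\partial_t$ and $\eta=dt+\sum_i h_i\,dx_i$. These are equivariant charts for the action, so $\Psi$ is a principal $S^1$-bundle.

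\textbf{Step 5: connection and curvature.} Applying $i_Rd\eta=0$ again gives $\partial_t h_i=0$, so each $h_i$ is pulled back from a function $h'_i$ on $V$. It follows that $\mathcal{L}_R\eta=0$ and $\eta(R)=1$. Since $R$ is the fundamental vector field of the action, this says exactly that $\eta$ is an invariant connection $1$-form (with Lie algebra $\R$). Moreover $d\eta=\Psi^*\omega_V$ with $\omega_V=d\big(\sum_i h'_i\,dx_i\big)$.

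Because $\Psi$ is a surjective submersion, $\Psi^*$ is injective on forms, so $\omega_V$ is uniquely determined by $d\eta$. The local forms therefore agree on overlaps and glue to a global closed $2$-form $\omega$ on $M/\mathcal{R}$ with $d\eta=\Psi^*\omega$. This proves (2).

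The main obstacle is Step 2: the whole argument depends on the period being constant. This uses essentially that $i_Rd\eta=0$ forces $\partial_{x_i}f=\partial_\tau g_i$, together with the periodicity of the $g_i$; everything after that is standard.
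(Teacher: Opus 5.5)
Your proposal is correct and follows the paper's own argument essentially step for step. The shared steps are: lifting to $V\times\R$ with $\eta=f\,d\tau+\sum_i g_i\,dx_i$; using $\partial_{x_i}f=\partial_\tau g_i$ and periodicity to make the period $\int_0^1 f\,d\tau$ constant; reparametrizing by $t=\int_0^\tau f(x,s)\,ds$ to get $R=\partial_t$ and $\eta=dt+\sum_i h_i\,dx_i$ with $h_i$ basic; and gluing the local curvature forms. Your remarks on the equivariance of the new charts and on the injectivity of $\Psi^*$ in the gluing make explicit points that the paper leaves implicit.
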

\begin{remark}
 The main importance of the condition $i_R d\eta = 0$ is that  $\mathcal{L}_R\eta=0$, that is the $1$-form $\eta$ is $\mathcal{R}$-invariant. As a result, $d\eta$ depends only on directions transverse to $\mathcal{R}$, and hence is the pullback of a $2$-form on the quotient $M/ \mathcal{R}$.
\end{remark}
\begin{prop}\label{prop1}
    Given the assumptions stated in Theorem \ref{thm1}, the Euler class of the principal $S^1$-bundle $M\xrightarrow{\Psi}M/\mathcal{R}$ is represented by $[\omega/\gamma]\in H^2(M/\mathcal{R},\Z)\subset H^2(M/\mathcal{R},\R)$ where $\gamma$ denotes the minimal period of the foliation $\mathcal{R}$.
\end{prop}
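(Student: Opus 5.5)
We treat this as a Chern–Weil computation for the principal $S^1$-bundle produced in Theorem \ref{thm1}. The plan is to rescale $\eta$ into a genuine $\mathfrak{u}(1)$-valued connection for the standard circle, and then compare its curvature with an integral \v{C}ech representative of the Euler class.

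The first step is normalization. By Theorem \ref{thm1}, the structure group is $S^1\cong\R/\gamma\cdot\Z$, and the flow $e^{tR}$ gives the action. Set $\tilde\eta:=\eta/\gamma$ and $\tilde R:=\gamma R$. The flow of $\tilde R$ has period $1$, so the action becomes an action of the standard $\R/\Z$, and $\tilde R$ is its fundamental vector field. We have $\tilde\eta(\tilde R)=1$ and $\mathcal{L}_{\tilde R}\tilde\eta=0$ (from $i_Rd\eta=0$ and Cartan's formula). Hence $\tilde\eta$ is a connection $1$-form for $\R/\Z$, with curvature $d\tilde\eta=\Psi^*(\omega/\gamma)$.

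The second step builds local data. Cover $M/\mathcal{R}$ by a good cover $\{V_a\}$ with local sections $s_a:V_a\to M$; these exist because $\Psi$ is a fibration, as in the local trivializations used before Theorem \ref{thm1}. In the coordinates \eqref{eq2}, rescaled by $\gamma$, each trivialization reads $V_a\times\R/\Z$ with $\tilde\eta=dt_a+\Psi^*A_a$ and $A_a:=s_a^*\tilde\eta$. On overlaps $s_b=e^{g_{ab}\tilde R}s_a$ for transition functions $g_{ab}:V_a\cap V_b\to\R/\Z$. This gives $A_b-A_a=dg_{ab}$, locally well defined since $d$ kills constants, and $dA_a=\omega/\gamma$ on $V_a$.

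The third step runs the \v{C}ech–de Rham zig-zag. Lift $g_{ab}$ to real functions $\tilde g_{ab}$; this is possible because the overlaps are contractible. The cocycle $n_{abc}:=\tilde g_{ab}+\tilde g_{bc}-\tilde g_{ac}\in\Z$ is locally constant and represents the Euler (first Chern) class in $H^2(M/\mathcal{R},\Z)$. The standard tic-tac-toe argument on the double complex, using the three relations $\omega/\gamma=dA_a$, $A_b-A_a=d\tilde g_{ab}$, and $\delta\tilde g=n$, shows that under the de Rham isomorphism the class of $n$ maps to $[\omega/\gamma]$, up to a sign fixed by convention. This simultaneously proves that $[\omega/\gamma]$ is integral, i.e.\ it lies in the image of $H^2(M/\mathcal{R},\Z)\to H^2(M/\mathcal{R},\R)$.

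An alternative check on the normalization uses the Gysin sequence or fiber integration. Since $\int_{\text{fiber}}\tilde\eta=1$, the form $\tilde\eta$ is a global angular form with $d\tilde\eta=\Psi^*(\omega/\gamma)$. By the Bott–Tu characterization of the Euler class, this gives $e=[\omega/\gamma]$ (again up to sign convention).

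The main obstacle is bookkeeping rather than substance. Two points need care: the sign convention in the zig-zag, and the fact that the period $\gamma$ must be divided out to get the integral normalization. The key point is that $\gamma$ is constant, which Theorem \ref{thm1}(1) guarantees; otherwise the rescaled $\tilde\eta$ would not be a connection form.
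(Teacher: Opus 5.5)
Your proposal is correct and follows essentially the same route as the paper: local trivializations over a good cover, local connection forms $dt_\alpha+\theta_\alpha$ with $\theta_\beta-\theta_\alpha=d\phi_{\alpha\beta}$ and $d\theta_\alpha=\omega$, and the \v{C}ech--de Rham zig-zag against the integral cocycle built from the $\R/\gamma\cdot\Z$-valued transition functions (the paper delegates this last step to Grabowska--Grabowski). The only differences are cosmetic: you rescale by the constant $\gamma$ at the outset instead of reducing the cocycle modulo $\gamma\cdot\Z$, and you add an optional global-angular-form check.
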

\begin{proof}
   Let $\{V_{\alpha},\phi_{\alpha}\}$ be a family of local trivializations 
    $\phi_{\alpha}:\Phi^{-1}(V_{\alpha})\longrightarrow V_{\alpha}\times S^1\,,$ with transition functions 
$\phi_{\alpha\beta}:V_{\alpha}\cap V_{\beta}\longrightarrow S^1\cong\R/\gamma\cdot\Z\,,$ such that $\{V_{\alpha}\}$ is a good cover of $M/\mathcal{R}$. Because the transition functions satisfy the cocycle condition, it implies that, for $x\in V_{\alpha}\cap V_{\beta}\cap V_{\delta}$,
$$\phi_{\alpha\beta}(x)+\phi_{\beta\delta}(x)+\phi_{\delta\alpha}(x)\equiv 0\mod(\gamma\cdot\Z)\,.$$ Now on $V_\alpha\times S^1$, we have 
$(\phi_\alpha^{-1})^{*}(\eta|_{\Phi^{-1}(V_{\alpha})})=dt_\alpha+\theta_\alpha$ (cf. \eqref{eq2}) where $d\theta_\alpha=\omega|_{V_\alpha}$.
Then by straightforward modification of \cite[Proposition 4.1]{grabo25} and because $\theta_\beta-\theta_\alpha=d\phi_{\alpha\beta}$, we get that $[\omega/\gamma]\in H^2(M/\mathcal{R},\Z)$.

\end{proof}
\begin{remark}
Note that for $\eta'_{\pm}=\pm(\eta_+-\eta_-)$ and $R'_{\pm}=\pm(R_--R_+)$, we have $\eta'_{\pm}(R'_{\pm})=1$. Under the assumptions of Theorem \ref{thm1}, this yields two distinct connection forms, $\eta'_+$ and $\eta'_-$, whose curvatures may differ; however, their curvature class is the same. In other words, the de Rham cohomology class of the curvature of $\eta$ in Theorem \ref{thm1} is independent of the choice of the connection form.
\end{remark}
\begin{remark}\label{rmk1}
    Observe that Theorem \ref{thm1} also holds when $M$ is an arbitrary smooth manifold, provided there exist a $1$-form $\eta'$ and a vector field $R'$ with $\eta'(R')\in\R\backslash\{0\}$, and the hypotheses of Theorem \ref{thm1} are satisfied for this pair.
\end{remark}
Given a regular generalized almost contact manifold $(M^{2n+1},\Phi,E_{\pm})$, let us assume that the associated vector field $R$ (cf. \eqref{f-eta}) is complete. Consequently, its flow is defined for all time and yields a smooth action $$(t,x)\to e^{tR}(x),\,\,\,(t,x)\in\R\times M\,,$$ of the additive group $(\R,+)$ on $M$. In general, the $R$-orbits of the flow may include both compact and non-compact orbits.
\medskip

So, suppose that all $R$-orbits are non-compact, and hence diffeomorphic to $\R$. In this case, the $\R$-action generated by $R$ is free, and therefore the projection $\Psi:M\longrightarrow M/\mathcal{R}$ (cf. Definition \ref{def:reg}) is automatically a fiber bundle; see \cite[Corollary 31]{meig}. Moreover, since the free $\R$-action given by the flow preserves the fibers of $\Psi$, the bundle $M\xrightarrow{\Psi} M/\mathcal{R}$ becomes an $\R$-principal bundle. By a direct modification of the proof of Theorem \ref{thm1} and of \cite[Section 5.1]{grabo25}, we get that the $1$ form $\eta$ defines a connection on the bundle, with curvature given by the $2$-form $\omega\in\Omega^2(M/\mathcal{R})$, as in Theorem \ref{thm1}.
\medskip

Now assume that there exist a compact orbit of the fibration $\Psi:M\longrightarrow M/\mathcal{R}$. Let $F=\Psi^{-1}(x)$ ($\cong S^1$) be the compact orbit of $\mathscr{R}$. By the Tubular Neighborhood Theorem, there exists an open (tubular) neighborhood $V\subset M$ of $F$ which is diffeomorphic to the normal bundle $\mathcal{N}_{F}$ of $F$. One can see that $\mathcal{N}_{F}$ is just the pullback of the normal bundle, denoted by $\mathcal{N}_R$, of the foliation $\mathcal{R}$ via the inclusion map $F\hookrightarrow M$. Consider the \emph{Bott connection} (cf. \cite[Section 6]{bott71}) on $\mathcal{N}_R$ which is flat along the leaves of $\mathcal{R}$. Then its pullback on $\mathcal{N}_{F}$ gives a flat connection. Thus, by \cite[Proposition 1.2.5]{kobayashi14}
$$\mathcal{N}_{F}\cong\tilde{F}\times_{\Gamma}\R^{2n}$$ where $\Gamma:\pi_1(F)\longrightarrow GL_{2n}(\R)$ is the linear holonomy representation of $\pi_1(F)$ and $\tilde{F}$ is the universal cover of $F$. Note that, since $F$ is a compact embedded submanifold, the holonomy group of $F$ is finite and exactly the image of $\Gamma$. Since $M/\mathcal{R}$ is a manifold and $H^1(\tilde{F},\R)=0$, it follows from \cite[Corollary 2]{thurston} and \cite[Section 5.2]{grabo25} that there exists a connected open neighborhood $V\subset M/\mathcal{R}$ such that $\Psi^{-1}(V)\xrightarrow{\Psi} V$ is a trivializable fiber bundle over $V$ with the typical fiber $S^1$. This is followed by the fact that the holonomy is trivial in this case.  Assuming $i_Rd\eta=0$ with $R$ and $\eta$ as in \eqref{f-eta}, Theorem \ref{thm1} implies that $\Psi^{-1}(V)\xrightarrow{\Psi} V$ is a principal $S^1$-bundle. Let us fix such $V$ and denote the associated minimal period  by $\gamma$. Consider the set 
$$(M/\mathcal{R})_{\gamma}:=\left\{x\in M/\mathcal{R}\,|\,\psi(x)=\gamma\,\,\text{and $\Psi^{-1}(x)$ is a compact orbit}\right\}\,,$$ where $\psi(x)$ is the minimal period on $\Psi^{-1}(x)$, as in \eqref{period}. Clearly, $M_{\gamma}$ is an open set. Let $x'\in M/\mathcal{R}$ be any point in the closure of $(M/\mathcal{R})_\gamma$, and choose $y'\in\Psi^{-1}(x')$. Since the submersion $\Psi$ is an open map, there exists, in a neighborhood of $y'$, a sequence $\{y_n'\}$ such that $$\Psi(y_n')=x'_n\in(M/\mathcal{R})_\gamma\quad\text{and}\quad y'_n\to y'\,\,\text{as}\,\,n\to\infty\,.$$ Because the vector field $R$ is complete, its flow $\psi_t:=e^{tR}$ is globally defined. For each $n\in\N$, the point $y'_n$ lies on a periodic orbit of period $\gamma$. Hence $$y'_n=\psi_\gamma(y'_n)\,.$$ Taking limits and using continuity of the flow yields
$$\psi_\gamma(y')=\lim_{n\to\infty} \psi_\gamma(y_n')
=\lim_{n\to\infty} y_n'= y'\,.$$ Thus, the fiber $\Psi^{-1}(x')$ is itself a periodic orbit of period $\gamma$. Moreover, this period is minimal, since the minimal period function \eqref{period} is locally constant on periodic orbits. Consequently, if $M/\mathcal{R}$ is connected and $R$ admits one periodic orbit of minimal period $\gamma$, then every orbit is periodic with the same minimal period $\gamma$. In particular, $(M/\mathcal{R})_{\gamma}$ is always closed, and when $M/\mathcal{R}$ is connected, we have $$(M/\mathcal{R})_{\gamma}=M/\mathcal{R}\,.$$ Observe that $[R,\eta]=\mathcal{L}_R\eta=i_Rd\eta$ (cf. \eqref{bracket}). 
\begin{theorem}\label{thm2}
    Let $(M,\Phi,E_{\pm})$ be a regular generalized almost contact manifold with complete vector field $R$ and $1$-form $\eta$ as in \eqref{f-eta}, and assume that $[R,\eta]=0$. Then,
    \vspace{0.2em}
    \begin{enumerate}
    \setlength\itemsep{0.5em}
    \item For the vector field $R$, all of its orbits are diffeomorphic either to $\R$ or to $S^1$. In other words, the set of orbits cannot contain both compact and non-compact orbits simultaneously.
     \item The canonical projection $\Psi:M\longrightarrow M/\mathcal{R}$ (cf. \eqref{psi}) endows $M$ with the structure of a principal $\R$- or $U(1)$-bundle, for which $\eta$ acts as a connection $1$-form, and its curvature is the closed $2$-form $\omega\in\Omega^2(M/\mathcal{R})$ determined by $d\eta=\Phi^*\omega$.
        \item If $\gamma\in(0,\infty]$ is the minimal period of the globally defined flow $e^{tR}$, then the cohomology class $[\omega/\gamma]\in H^2(M/\mathcal{R},\Z)\subset H^2(M/\mathcal{R},\R)$ represents the Euler class of the bundle.
    \end{enumerate}
\end{theorem}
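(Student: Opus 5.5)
Since $[R,\eta]=\mathcal{L}_R\eta=i_Rd\eta$ (because $\eta(R)=1$ is constant), the hypothesis $[R,\eta]=0$ is exactly the condition $i_Rd\eta=0$ of Theorem \ref{thm1}. The plan has three steps: use the discussion preceding the statement to settle the orbit dichotomy, then build the principal bundle structure case by case, and finally identify the Euler class via Proposition \ref{prop1}. Throughout I assume $M/\mathcal{R}$ is connected (equivalently $M$ connected), as in Theorem \ref{thm1}; otherwise everything is applied componentwise.

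\textbf{Step 1: the dichotomy (1).} Since $\mathcal{R}$ is simple, each leaf $\Psi^{-1}(x)$ is a closed embedded $1$-dimensional submanifold, hence diffeomorphic to $\R$ or $S^1$. Suppose some orbit $F$ is compact. The tubular neighborhood and Bott connection argument above gives a neighborhood $V$ of $\Psi(F)$ over which $\Psi$ is a trivial $S^1$-bundle. Theorem \ref{thm1} on $\Psi^{-1}(V)$ then shows every orbit over $V$ is periodic with the same minimal period $\gamma$, so $(M/\mathcal{R})_\gamma$ is open. The limit argument with the complete flow $\psi_\gamma=e^{\gamma R}$ shows that $(M/\mathcal{R})_\gamma$ is closed. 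By connectedness, $(M/\mathcal{R})_\gamma=M/\mathcal{R}$, so either all orbits are circles or none are. The delicate point is minimality of $\gamma$ at limit points. I would justify it by rerunning the local argument at $x'$: the orbit over $x'$ is compact, so it too has a neighborhood with constant minimal period. That period agrees with $\gamma$ on the overlap with $(M/\mathcal{R})_\gamma$, which is nonempty because $x'$ lies in the closure.

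\textbf{Step 2: the principal bundle structure (2).} In the noncompact case the $\R$-action $t\cdot y=e^{tR}(y)$ is free, and \cite[Corollary 31]{meig} makes $\Psi$ a principal $\R$-bundle. In the compact case the action of $\R/\gamma\Z\cong U(1)$ is free. It is locally trivial by Step 1, and the transition maps are translations because the local trivializations come from the flow coordinates \eqref{eq2}; hence $\Psi$ is a principal $U(1)$-bundle. In either case I then verify the connection properties locally in the coordinates $(x_i,t)$ of \eqref{eq2}, where $R=\partial_t$ and $\eta=dt+\sum_i h_i'(x)\,dx_i$. Thus $\eta$ is invariant ($\mathcal{L}_R\eta=0$) and $\eta(R)=1$, so $\eta$ is a connection form. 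Moreover $d\eta=\Psi^*\omega_V$, and the local forms $\omega_V$ glue to a global closed $\omega$ because $\Psi^*$ is injective on forms. In the $\R$ case the local coordinates come from a local section of the free action rather than from periodicity, and the computation is otherwise identical.

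\textbf{Step 3: the Euler class (3).} For $U(1)$ this is Proposition \ref{prop1} with the global $\gamma$ from Step 1: on a good cover, $\theta_\beta-\theta_\alpha=d\phi_{\alpha\beta}$ and the cocycle is $\gamma\Z$-valued, so $[\omega/\gamma]$ is integral and represents the Euler class. For $\gamma=\infty$, i.e.\ the $\R$-bundle, the bundle is topologically trivial since $\R$ is contractible. Then $\omega=d\theta$ for a global $\theta$ obtained from a global section, so $[\omega]=0$. The convention $[\omega/\infty]=0$ matches the vanishing Euler class.

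I expect the main obstacle to be Step 1, namely producing the local trivialization near a compact leaf via trivial holonomy (\cite{thurston}) and ensuring the minimal period is truly locally constant. Most of the analytic content is already in the discussion preceding the statement, so this is where the remaining care lies.
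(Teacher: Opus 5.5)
Your proposal is correct and follows essentially the paper's own route. The paper likewise uses an open--closed argument on $(M/\mathcal{R})_\gamma$: openness comes from local triviality near a compact leaf together with Theorem \ref{thm1}, and closedness from continuity of the complete flow $e^{\gamma R}$. It then invokes Meigniez for the free $\R$-action, the flow coordinates \eqref{eq2} for the connection and curvature, and Proposition \ref{prop1} for the Euler class. Your explicit justification of minimality of the period at limit points, and your treatment of the $\gamma=\infty$ case (a trivial $\R$-bundle with exact $\omega$), simply fill in details the paper leaves implicit.
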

\begin{proof}
Follows from the preceding discussion and Proposition \ref{prop1}.   
\end{proof}
\begin{remark}
    Note that, the principal $S^1$-bundles in Theorem \ref{thm2} (also in Theorem \ref{thm1}), are classified by their Euler classes in integral cohomology $H^2(M/\mathcal{R},\Z)$.
\end{remark}
\begin{remark}\label{rmk2}
   As in Remark \ref{rmk1}, Theorem \ref{thm2} extends to any smooth manifold $M$, provided there exist a $1$-form $\eta'$ and a vector field $R'$ such that $\eta'(R')\not\equiv 0$, and the assumptions of Theorem \ref{thm2} hold for this pair.
\end{remark}

\subsection{Induced generalized complex structures}
Let $(M^{2n+1},\Phi,E_{\pm})$ be a (connected) generalized almost contact manifold, together with the nowhere vanishing pair $\{R,\eta\}$ (cf. \eqref{f-eta}) and the induced foliation $\mathcal{R}$. Let $T\mathcal{R}:=\langle R\rangle$ be the corresponding involutive subbundle of $TM$ of rank $1$, called the tangent bundle of the foliation. The normal bundle of the foliation, denoted by $\mathcal{N}_R$, is defined by
\begin{equation}\label{nrml bndl}
    \mathcal{N}_R:=TM/T\mathcal{R}\,.
\end{equation}
As the exact sequence 
\[\begin{tikzcd}[ampersand replacement=\&]
	0 \& {T\mathscr{R}} \& TM \& {\mathcal{N}_R} \& 0
	\arrow[from=1-2, to=1-3]
	\arrow[from=1-3, to=1-4]
	\arrow[from=1-1, to=1-2]
	\arrow[from=1-4, to=1-5]
\end{tikzcd}\,\]
splits smoothly,
$\mathcal{N}_R$ may be regarded as a subbundle of $TM$ (after fixing a Riemannian metric), complementary to $T\mathcal{R}$, and has rank $2n$. Let $\ann(R):=\{\beta\in T^*M\,|\,\beta(R)=0\}\subset T^*M$ be the annihilator of $R$, and consider the subbundle on $M$
\begin{equation}\label{subbndl}
\mathbb{T}_{\mathcal{R}}M:=\ann(R)\oplus\mathcal{N}_R\subset TM\oplus T^*M\,.
\end{equation}
\begin{definition}\label{imp def}
~
    \begin{enumerate}
        \item Given a subset $K\subseteq TM\oplus T^{*}M$, the orthogonal complement of $K$, denoted by $K^\perp$, with respect to the bilinear form \eqref{bilinear} is defined as 
        $$K^\perp:=\left\{s\in TM\oplus T^{*}M\,|\,\langle s,K\rangle=0\right\}\,.$$
        \item A section $s\in C^\infty(TM\oplus T^{*}M)$ is called $R$-invariant if $[C^{\infty}(T\mathcal{R}),s]\subseteq C^{\infty}(T\mathcal{R})$. Denote by $C^{\infty}_{\mathcal{R}}(K)$, the set of $R$-invariant sections of a subset $K\subseteq TM\oplus T^{*}M$. Additionally, we say $R$ preserves $K$ if $[C^{\infty}(T\mathcal{R}),C^{\infty}(K)]\subseteq C^{\infty}(K)$.
    \end{enumerate}
\end{definition}
Observe that $\mathbb{T}_{\mathcal{R}}M\cong TM\oplus\ann(R)/ T\mathcal{R}\oplus\{0\}$. Consequently, the bilinear pairing \eqref{bilinear} descend naturally to $\mathbb{T}_{\mathcal{R}}M$. However, this bundle may not be closed under the Courant bracket (\eqref{bracket}) for arbitrary sections. When restricted to basic sections, that is, sections constant along the leaves, $\mathbb{T}_{\mathcal{R}}M$ carries a natural Courant algebroid structure induced from $TM\oplus T^*M$. More precisely, $C^{\infty}_{\mathcal{R}}(\mathbb{T}_{\mathcal{R}}M)$ is closed under the Courant bracket, in the following way. 
\vspace{0.3em}

Notice that $[R,s]\in C^\infty(TM\oplus\ann(R))$ for any $s\in C^\infty(TM\oplus\ann(R))$, implying that 
\begin{equation}\label{1.1}
[C^\infty(T\mathcal{R})),C^\infty(TM\oplus\ann(R))]\subseteq C^\infty(TM\oplus\ann(R))\,.   
\end{equation}
Now, let $s_1,s_2\in C_{\mathcal{R}}^\infty(TM\oplus\ann(R))$ (cf. Definition \ref{imp def}). By \eqref{eqcc}, we obtain
\begin{equation}\label{1}
0=\pr_{TM}(s_1)\langle s_2,R\rangle=\langle [s_1,s_2],R\rangle +\langle s_2,[s_1,R]\rangle=\langle [s_1,s_2],R\rangle\,,    
\end{equation}
and also,
\begin{equation}\label{2}
\begin{aligned}
    &[[R, s_1], s_2] + [s_1, [R, s_2]]-[R, [s_1, s_2]]\\
    &=\frac{1}{3}d\bigg(\langle [R,s_1],s_2\rangle+\langle R,[s_1,s_2]\rangle-\langle s_1,[R,s_2]\rangle\bigg)\\
    &=\frac{1}{3}d(\langle R,[s_1,s_2]\rangle)\quad(\text{as $[R,s_i]\in C^\infty(T\mathcal{R})$ for $i=1,2$})\\
    &=0\quad(\text{by \eqref{1}})\,.
\end{aligned}
\end{equation}
Since $[R,s_1],[R,s_2]\in C^\infty(T\mathcal{R})$, the sum $[[R, s_1], s_2] + [s_1, [R, s_2]]$ is in $C^\infty(T\mathcal{R})$. Consequently, using \eqref{2}, we get that $[R,[s_1, s_2]]\in C^\infty(T\mathcal{R})\,.$ Therefore
$$[C_{\mathcal{R}}^\infty(TM\oplus\ann(R)),C_{\mathcal{R}}^\infty(TM\oplus\ann(R))]\subseteq C_{\mathcal{R}}^\infty(TM\oplus\ann(R))\,.$$
Using \eqref{1.1}, we conclude that  $C_{\mathcal{R}}^\infty(\mathbb{T}_{\mathcal{R}}M)$ is Courant involutive, that is,
\begin{equation}\label{1.2}
[C_{\mathcal{R}}^\infty(\mathbb{T}_{\mathcal{R}}M),C_{\mathcal{R}}^\infty(\mathbb{T}_{\mathcal{R}}M)]\subseteq C_{\mathcal{R}}^\infty(\mathbb{T}_{\mathcal{R}}M)\,.
\end{equation}
In other words, $\mathbb{T}_{\mathcal{R}}M$ admits a Courant algebroid structure in the transverse direction of the foliation $\mathcal{R}$. This is possible because $\mathbb{T}_{\mathcal{R}}M$ is a foliated (or basic) vector bundle over $M$, whose transition functions are basic, meaning they are constant along the leaves of the foliation. 
\begin{definition}\label{td def}
 We will refer to the bundle $\mathbb{T}_{\mathcal{R}}M$ as the \textit{transverse Courant algebroid} over $M$, associated to the foliation $\mathcal{R}$. A subbundle $L\subset\mathbb{T}_{\mathcal{R}}M$ of rank $\frac{\dim M-1}{2}$ is called a \textit{transverse Dirac structure} if $L$ is isotropic and $C_{\mathcal{R}}^\infty(L)$ is closed under the Courant bracket.
\end{definition}

Set
\begin{equation}\label{imp not}
\begin{aligned}
    &T\mathcal{R}^{\perp}:=\left\{s\in TM\oplus T^{*}M\,|\,\langle s,R\rangle=0\right\}=TM\oplus\ann(R)\,;\\
    &C^{\infty}_{\mathcal{R}}(T\mathcal{R}^{\perp}):=\left\{s\in C^{\infty}(T\mathcal{R}^{\perp})\,|\,[C^{\infty}(T\mathcal{R}),s]\subset C^{\infty}(T\mathcal{R})\right\}\,.
\end{aligned}
\end{equation}
\begin{lemma}\label{lemma 1}
   Let $N^n$ be a smooth manifold admitting a nowhere-vanishing vector field $X$. Let $L\subset TN\oplus T^{*}N$ be a maximal isotropic subbundle such that $X\not\in C^\infty(L)$. Denote by $L_X\subset TN$ the line subbundle spanned by the vector field $X$. Then 
   \begin{enumerate}
   \setlength\itemsep{0.5em}
       \item $H:=L\cap(TN\oplus\ann(X))$ is an isotropic subbundle of $TN\oplus T^{*}N$.
       \item $H^\perp:=\left\{s\in TN\oplus T^{*}N\,|\,\langle s,H\rangle=0\right\}=L\oplus L_X$.
   \end{enumerate}
\end{lemma}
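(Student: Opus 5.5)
The plan is to identify $H$ as the kernel of a nowhere-vanishing linear functional on $L$, and then get $H^\perp$ by a rank count.

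First I rewrite $H$ using the pairing \eqref{bilinear}. For $Y+\xi\in TN\oplus T^*N$ we have $\langle Y+\xi,X\rangle=\tfrac12\xi(X)$. Hence $TN\oplus\ann(X)=L_X^\perp$, the orthogonal complement of the line $L_X$ (cf. Definition \ref{imp def}). So
$$H=L\cap L_X^\perp=\ker\big(\ell:L\to\R\big),\qquad \ell(e):=\langle e,X\rangle .$$
Here $\ell$ is a smooth bundle map from $L$ to the trivial line bundle.

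\textbf{Part (1).} I will show that $\ell$ is nonzero at every point $x\in N$. I read the hypothesis $X\notin C^\infty(L)$ pointwise, as $X_x\notin L_x$ for every $x$; this is the form needed, and I would state it explicitly.
\begin{itemize}
\item Suppose $\ell_x=0$. Then $X_x\in L_x^\perp$.
\item Since $L_x$ is maximal isotropic in the split signature space $T_xN\oplus T_x^*N$, we have $L_x^\perp=L_x$.
\item This gives $X_x\in L_x$, a contradiction.
\end{itemize}
So $\ell$ has constant rank $1$. Its kernel $H$ is therefore a smooth subbundle of rank $n-1$, for example by the constant-rank kernel theorem or by local frames. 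Since $H\subset L$ and $L$ is isotropic, $H$ is isotropic. This step, namely that maximality forces $L^\perp=L$ and hence $\ell\neq0$ pointwise, is the only real content. It is where the pointwise reading of the hypothesis is essential.

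\textbf{Part (2).} I argue pointwise by inclusion and rank.
\begin{itemize}
\item $L\subset H^\perp$, because $H\subset L$ and $L$ is isotropic.
\item $L_X\subset H^\perp$, because $H\subset L_X^\perp$.
\item Hence $L+L_X\subset H^\perp$.
\item The sum is direct, $L_x\cap (L_X)_x=0$, because $X_x\notin L_x$. So $L\oplus L_X$ has rank $n+1$.
\item Nondegeneracy of the pairing on the rank-$2n$ bundle gives $\operatorname{rank}H^\perp=2n-(n-1)=n+1$.
\end{itemize}
Comparing ranks yields $H^\perp=L\oplus L_X$ as smooth subbundles.
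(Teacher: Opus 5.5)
Your proof is correct and follows the paper's argument essentially step for step. The paper also realizes $H$ as the kernel of $l\mapsto\langle X,l\rangle$ on $L$ and uses maximal isotropy to get constant rank $1$; it then proves part (2) by the inclusion $L\oplus L_X\subset H^\perp$ together with the dimension count $n+1$, and your explicit pointwise reading of $X\notin C^\infty(L)$ (namely $X_x\notin L_x$ for every $x$), justified via $L_x^\perp=L_x$, makes precise a step the paper uses only implicitly.
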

\begin{proof}
~
\begin{enumerate}
 \setlength\itemsep{0.3em}
    \item Consider the bundle map $\varphi:L\longrightarrow N\times\R\cong L_X$ defined by $\varphi(l)=\langle X,l\rangle$. Since $X\not\in C^\infty(L)$ and $L$ is maximal isotropic bundle, the map $\varphi$ has rank $1$ at every point of $N$. Consequently, its kernel $\ker(\varphi)=H$ has constant rank $n-1$. Therefore $H$ is a subbundle, and by construction, it is isotropic.
    \item Fix any $x\in N$. Now $\dim L_x=n$ and $\dim H_x=n-1$. It follows that 
    \begin{align*}
        &\dim(L\oplus L_X)_x=\dim L_x+1=n+1\,;\\
        &\dim H^\perp_x=\dim(T_xN\oplus T_x^*N)-\dim H_x=n+1\,.
    \end{align*}
Let $u=l+rX\in L_x\oplus (L_X)_x$ where $r\in\R$, and $l\in L_x$. Then, for any $f\in H_x$,
$$\langle u,f\rangle=\langle l,f\rangle+r\langle X,f\rangle=0\,,$$ since $f\in T_xN\oplus\ann(X)_x$\,, and $L_x$ is isotropic. So $$L_x\oplus (L_X)_x\subset H^\perp_x\,.$$ Since $\dim(L\oplus L_X)_x=\dim H^\perp_x$, we have $L_x\oplus (L_X)_x=H^\perp_x$. Thus $L\oplus L_X=H^\perp$.
\end{enumerate}
\end{proof}
\begin{prop}\label{prop c}
~
    \begin{enumerate}
    \setlength\itemsep{0.5em} 
    \item $TM\oplus T^{*}M=\img\Phi \oplus\ker\Phi=T\mathcal{R}^{\perp}\oplus L_\eta$ where $L_\eta$ is the real line bundle generated by $\eta$.
        \item $\rank(T\mathcal{R}^{\perp})=4n+1$, $\rank(\mathbb{T}_{\mathcal{R}}M)=4n$ and $\rank(\ann(R))=2n$.
        \item $H_\pm:=L_\pm\cap (T\mathcal{R}^{\perp}\otimes\C)=\left\{X+\xi\in L_\pm\,|\,\xi(R)=0\right\}$ are isotropic complex subbundles of $(TM\oplus T^{*}M)\otimes\C$ where $L_\pm\subset (TM\oplus T^{*}M)\otimes\C$ are maximal isotropic subbundles, defined as in \eqref{contact dirac}. Moreover,
        \[
        \rank(H_\pm)=
        \begin{cases}
           2n & \text{if $R\not\in C^\infty(L_\pm)$}\,;\\
           2n+1 & \text{if $R\in C^\infty(L_\pm)$}\,.
        \end{cases}
         \] 
         \item $H_\pm^\perp:=\left\{s\in TM\oplus T^{*}M\,|\,\langle s,H_\pm\rangle=0\right\}=L_\pm+(T\mathcal{R}\otimes\C)\,.$
         \item $\left(L_\pm+(T\mathcal{R}\otimes\C)\right)\cap (T\mathcal{R}^{\perp}\otimes\C)=H_\pm+(T\mathcal{R}\otimes\C)\,.$ Moreover, the subbundle $H_\pm+(T\mathcal{R}\otimes\C)$ is isotropic and the rank of $H_\pm+(T\mathcal{R}\otimes\C)$ is $2n+1$.
    \end{enumerate}
\end{prop}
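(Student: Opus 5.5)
The plan is to prove the five items in order, using only linear algebra on the fibres together with Lemma \ref{lemma 1} (complexified). The one new ingredient is the observation that $\langle R,\eta\rangle=\tfrac12\eta(R)=\tfrac12\neq0$. Throughout, $R$ is regarded as the section $R+0$ of $TM\oplus T^*M$. With this convention $T\mathcal{R}^\perp=\{s\,|\,\langle s,R\rangle=0\}=TM\oplus\ann(R)$.

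For (1), I first show $\ker\Phi=\spn\{E_+,E_-\}$ and $\img\Phi=\spn\{E_\pm\}^\perp$.
\begin{itemize}
\item The relation $\Phi^2=-I+E_+\otimes E_-+E_-\otimes E_+$, together with $\Phi(E_\pm)=0$ (Remark \ref{imp rmk2}), shows that $-\Phi^2$ is the identity on $\spn\{E_\pm\}^\perp$. It also shows that $\Phi$ is injective there.
\item Skew-adjointness of $\Phi$ gives $\img\Phi\subseteq(\ker\Phi)^\perp=\spn\{E_\pm\}^\perp$.
\item The pairing is nondegenerate on $\spn\{E_\pm\}$, since its Gram matrix has off-diagonal entries $\tfrac12$ and zero diagonal. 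Hence $TM\oplus T^*M=\img\Phi\oplus\ker\Phi$.
\end{itemize}
For the second decomposition, $s\mapsto\langle s,R\rangle$ is a surjective bundle map onto the trivial line bundle, with kernel $T\mathcal{R}^\perp$. Since $\langle \eta,R\rangle\neq0$, the line $L_\eta$ is a complement. Item (2) is then a rank count: $\rank T\mathcal{R}^\perp=(4n+2)-1$. Also $\ann(R)$ is the kernel of evaluation on the nowhere vanishing $R$, so it has rank $2n$, and $\rank\mathbb{T}_{\mathcal{R}}M=2n+2n$.

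For (3) I split into the two cases of the statement. If $R\notin C^\infty(L_\pm)$, I apply Lemma \ref{lemma 1}(1) verbatim after complexifying, with $N=M$, $X=R$ and $L=L_\pm$ maximal isotropic of complex rank $2n+1$. The map $l\mapsto\langle R,l\rangle$ then has rank one, so its kernel $H_\pm$ is an isotropic subbundle of rank $2n$. If $R\in C^\infty(L_\pm)$, isotropy of $L_\pm$ gives $\langle R,L_\pm\rangle=0$, so $L_\pm\subseteq T\mathcal{R}^\perp\otimes\C$ and $H_\pm=L_\pm$ has rank $2n+1$.

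Item (4) follows the same two cases. In the first case it is exactly Lemma \ref{lemma 1}(2), complexified: $H_\pm^\perp=L_\pm\oplus(T\mathcal{R}\otimes\C)$. In the second case $H_\pm=L_\pm$ is maximal isotropic, so $H_\pm^\perp=L_\pm=L_\pm+(T\mathcal{R}\otimes\C)$.

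For (5), write $s=l+rR$ with $l\in L_\pm$. Because $\langle R,R\rangle=0$, we get $\langle s,R\rangle=\langle l,R\rangle$, so $s\in T\mathcal{R}^\perp\otimes\C$ exactly when $l\in H_\pm$; this gives the stated intersection. Isotropy of $H_\pm+(T\mathcal{R}\otimes\C)$ follows from three facts: $H_\pm$ is isotropic, $\langle R,H_\pm\rangle=0$, and $\langle R,R\rangle=0$. For the rank:
\begin{itemize}
\item in the first case $R\notin L_\pm$, so $H_\pm\cap(T\mathcal{R}\otimes\C)=0$ and the rank is $2n+1$;
\item in the second case $R\in H_\pm=L_\pm$, so the sum is $L_\pm$, again of rank $2n+1$.
\end{itemize}

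The main obstacle is the dichotomy in (3). The case split is really pointwise, and a priori $R$ could lie in $(L_\pm)_x$ at some points and not at others. The kernel of $l\mapsto\langle R,l\rangle$ would then jump in rank, and $H_\pm$ would fail to be a subbundle. I would handle this by reading the hypothesis ``$R\in C^\infty(L_\pm)$ or not'' as a global condition on all of $M$, which is how the statement is phrased. I would also check directly from \eqref{contact dirac} whether the locus where $R\in(L_\pm)_x$ is open and closed, using the decomposition $L_\pm=\C E_\pm\oplus E^{(1,0)}$. Failing that, I would record constancy of this condition as an implicit standing assumption, so that both the rank computation and the subbundle property hold uniformly. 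Items (4) and (5) inherit exactly this case distinction and need nothing further.
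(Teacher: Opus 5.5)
Your argument is essentially the paper's. There too $H_\pm$ is the kernel of $l\mapsto\langle R,l\rangle$ on $L_\pm$ (the paper phrases it as the projection onto $L_\eta\otimes\C$), (4) comes from Lemma \ref{lemma 1}, and (5) from direct computation. In the case $R\in C^\infty(L_\pm)$ you use only isotropy of $L_\pm$, which is shorter than the paper's detour through $L_\pm\cap\overline{L_\pm}=L_{E_\pm}\otimes\C$. Under the pointwise dichotomy you isolate, every step you give is correct.

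The obstacle you flag is genuine, and the paper does not resolve it. Its proof of (3) simply asserts $(T\mathcal{R}\otimes\C)\cap L_\pm=\{0\}$ at every point, which is the pointwise hypothesis and does not follow from $R\notin C^\infty(L_\pm)$. Your proposed open-and-closed check will also come out negative.

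\textbf{Characterizing the locus.} Since $R$ is real and $L_+\cap\overline{L_+}=L_{E_+}\otimes\C$, one has $R_x\in(L_+)_x$ iff $R_x$ is a multiple of $E_+(x)$. The normalizations $\langle E_\pm,E_\pm\rangle=0$ and $2\langle E_+,E_-\rangle=1$ turn this into $(\eta_+)_x=0$ and $(X_-)_x=0$. So the locus is closed, but it need not be open, even for generalized contact structures.

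\textbf{A counterexample.} On $\R^3$ take $E_+=\partial_z$ and $E_-=dz$. Let $\Phi=\mathcal{J}_\omega$ on $\spn\{\partial_x,\partial_y,dx,dy\}$ with $\omega=dx\wedge dy$, and $\Phi=0$ on $\spn\{\partial_z,dz\}$. Apply the $B$-field transform by the closed form $B=f(x)\,dz\wedge dx$, where $f=0$ for $x\le 0$ and $f>0$ for $x>0$.
\begin{itemize}
\item The result is generalized contact with $L_+$ involutive, $E_+=\partial_z+f\,dx$, $E_-=dz$ and $R=\partial_z$.
\item $R_x\in(L_+)_x$ holds exactly on $\{x\le 0\}$, so $R\notin C^\infty(L_+)$.
\item Nevertheless $\rank H_+$ is $3$ on $\{x\le 0\}$ and $2$ on $\{x>0\}$, so $H_+$ is not a subbundle.
\item Likewise $H_++(T\mathcal{R}\otimes\C)$ jumps from $L_+$ at $x=0$ to $\spn\{\partial_z,\partial_y,dx\}$ for $x>0$, so it is not even continuous.
\end{itemize}

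Hence the condition that $R_x\in(L_\pm)_x$ holds either at every point or at no point cannot be derived from the hypotheses. It must be added to the statement. With it, your proof is complete.
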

\begin{proof}
    $(1)$ and $(2)$ follow from direct computation together with the rank-nullity theorem for finite-dimensional vector spaces.\\\\
    $(3)$ Let $R\in C^{\infty}(L_\pm)$. Then
    \begin{equation}\label{eq1.5}
        T\mathcal{R}\otimes\C\subseteq L_\pm\cap\overline{L_\pm}=L_{E_\pm}\otimes\C\,,
    \end{equation}
    and by comparing vector space dimensions we obtain $T\mathcal{R}=L_{E_\pm}$. This holds if and only if $(\Phi,E_{\pm})$ is a generalized almost contact structure of Poon-Wade type. Consequently, we have $H_\pm=L_\pm\,,$ since $T\mathcal{R}^{\perp}\otimes\C=(L_{E_\pm}\otimes\C)\oplus E^{(1,0)}\oplus E^{(0,1)}$.
    \vspace{0.5em}
    
    Now, let $R\not\in C^{\infty}(L_\pm)$. Consider the projection map $\pr_{L_{\eta}}:(TM\oplus T^{*}M)\otimes\C\longrightarrow L_\eta\otimes\C$ and restrict it to $L_\pm$. Then $$\ker(\pr_{L_{\eta}}|_{L_\pm})=L_\pm\cap (T\mathcal{R}^{\perp}\otimes\C)\,.$$ The rank of $\pr_{L_{\eta}}|_{L_\pm}$ is either $0$ or $1$. Suppose that at some point $x\in M$, $\rank(\pr_{L_{\eta}}|_{L_\pm})=0$. Then $L_\pm\subseteq (T\mathcal{R}^{\perp}\otimes\C)$ at $x$. Note that $(T\mathcal{R}\otimes\C)\cap L_\pm=\{0\}$ as $\langle L_\pm,E_\pm\rangle=0$. Consequently, $(L_\pm)_x\oplus(T_x\mathcal{R}\otimes\C)$ is an isotopic subspace in $(T\mathcal{R}^{\perp})_x\otimes\C$, and contains $(L_\pm)_x$. This is impossible, because $(L_\pm)_x$ are maximal isotropic subspaces of $(T_xM\oplus T^*_xM)\otimes\C$, which would imply $(L_\pm)_x=(L_\pm)_x\oplus(T_x\mathcal{R}\otimes\C)$, and therefore $R\in(L_\pm)_x$. Hence $L_\pm\cap (T\mathcal{R}^{\perp}\otimes\C)$ have constant complex rank $2n$, implying they are complex subbundles of $(TM\oplus T^{*}M)\otimes\C$.\\\\
    $(4)$ and $(5)$ Follow from a straightforward modification of Lemma \ref{lemma 1} and and direct computation, respectively. 
\end{proof}
The proof of the next Proposition follows similar arguments in \cite[Lemma 4.1]{zambon1} and provides a slight generalization, in terms of complexified version.
\begin{prop}\label{prop  c1}
The sets of $R$-invariant section of $H_\pm+(T\mathcal{R}\otimes\C)$ are closed under the Courant bracket, that is,
$$[C^\infty_{\mathcal{R}}(H_\pm+(T\mathcal{R}\otimes\C)),C^\infty_{\mathcal{R}}(H_\pm+(T\mathcal{R}\otimes\C))]\subseteq C^\infty_{\mathcal{R}}(H_\pm+(T\mathcal{R}\otimes\C))\,,$$ provided, $L_\pm$ are also Courant involutive, respectively. In particular, $[C^\infty_{\mathcal{R}}(H_\pm),C^\infty_{\mathcal{R}}(H_\pm)]\subseteq C^\infty(H^\perp_\pm)$.
\end{prop}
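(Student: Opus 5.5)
We work with $L_+$; the case of $L_-$ is identical. Write $D:=H_++(T\mathcal{R}\otimes\C)$. By Proposition \ref{prop c}(5), $D=(L_++T\mathcal{R}\otimes\C)\cap(T\mathcal{R}^\perp\otimes\C)=H_+^\perp\cap(T\mathcal{R}^\perp\otimes\C)$.

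Let $s_1,s_2\in C^\infty_{\mathcal{R}}(D)$. The aim is to show that $[s_1,s_2]$ is again $R$-invariant and lies in $T\mathcal{R}^\perp\otimes\C$ and in $H_+^\perp=L_++T\mathcal{R}\otimes\C$. The first two properties come from the computation \eqref{1}--\eqref{2}, which used only $R$-invariance and $s_i\in T\mathcal{R}^\perp$. Hence $[s_1,s_2]\in C^\infty_{\mathcal{R}}(T\mathcal{R}^\perp\otimes\C)$, and it remains to show $[s_1,s_2]\in H_+^\perp$.

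For this, decompose locally $s_i=h_i+f_iR$ with $h_i\in C^\infty(H_+)\subset C^\infty(L_+)$ and $f_i$ smooth functions. Bilinearity of the Courant bracket together with the anchor rule $[a,fb]=f[a,b]+(\pr_{TM}(a)f)b-\langle a,b\rangle df$ gives four kinds of terms.
\begin{itemize}
\item $[h_1,h_2]\in L_+$, by Courant involutivity of $L_+$.
\item $[h_1,f_2R]$ equals $f_2[h_1,R]+(\pr_{TM}(h_1)f_2)R$. The $df_2$-term vanishes because $\langle h_1,R\rangle=0$, since $h_1\in T\mathcal{R}^\perp$.
\item The term $[f_1R,h_2]$ is handled symmetrically.
\item $[f_1R,f_2R]$ is a multiple of $R$ plus a $df$-term with coefficient $\langle R,R\rangle=0$.
\end{itemize}
So everything reduces to controlling $f_2[h_1,R]$ modulo $L_++T\mathcal{R}\otimes\C$.

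To control it, rewrite $[h_1,R]=[s_1,R]-[f_1R,R]$. We have $[s_1,R]\in T\mathcal{R}$ by $R$-invariance, and $[f_1R,R]=-(Rf_1)R$ up to a $df_1$-term with coefficient $\langle R,R\rangle=0$. Hence $[h_1,R]\in C^\infty(T\mathcal{R}\otimes\C)$, and likewise $[R,h_2]\in C^\infty(T\mathcal{R}\otimes\C)$. Collecting terms gives $[s_1,s_2]\in L_++T\mathcal{R}\otimes\C=H_+^\perp$. Combined with the first step, $[s_1,s_2]\in D$, and it is $R$-invariant.

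The point needing the most care is that $h_i$ itself need not be $R$-invariant. Only the combination $s_i=h_i+f_iR$ is, and the argument has to be run through $s_i$ as above rather than through $h_i$ directly.

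The "in particular" statement follows from the same computation with $f_i=0$. If $h_1,h_2\in C^\infty_{\mathcal{R}}(H_+)$, then $[h_1,h_2]\in L_+\subset H_+^\perp$ by involutivity of $L_+$ and Proposition \ref{prop c}(4). This is weaker than membership in $H_+$, which is exactly what the statement asserts.
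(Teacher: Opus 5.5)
Your proof is correct and follows the paper's route. The computation \eqref{1}--\eqref{2} gives $R$-invariance and membership in $T\mathcal{R}^\perp\otimes\C$. Your decomposition $s_i=h_i+f_iR$, with $[h_1,R]=[s_1,R]-[f_1R,R]\in C^\infty(T\mathcal{R}\otimes\C)$, spells out the step where involutivity of $L_\pm$ gives $[s_1,s_2]\in L_\pm+T\mathcal{R}\otimes\C$; the paper's one-line proof leaves this implicit under ``straightforward modification'' and the citation of \cite[Lemma 4.1]{zambon1}.

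Your handling of the ``in particular'' clause is also fine. Combined with your first step, it even gives $[h_1,h_2]\in L_\pm\cap(T\mathcal{R}^\perp\otimes\C)=H_\pm$ for $R$-invariant $h_i$.
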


\begin{proof}
Let $s_1,s_2\in C^\infty_{\mathcal{R}}(H_\pm+(T\mathcal{R}\otimes\C))$. Then a straightforward modification of equations \eqref{1} and \eqref{2}, together with the fact that $[R,s_i]\in C^\infty(T\mathcal{R}\otimes\C)$ ($i=1,2$), yields the first result. The particular case is an immediate consequence of the fact that $H_\pm\subseteq H_\pm+(T\mathcal{R}\otimes\C)\subset H^\perp_\pm$; see Proposition \ref{prop c}. 
\end{proof}
From Proposition \ref{prop c}, we see that the subbundles $H_\pm+(T\mathcal{R}\otimes\C)$ are, in fact, maximal isotropic subbundles. Then, Proposition \ref{prop  c1} ensures the Courant involutiveness for $R$-invariant sections when $M$ is a generalized contact manifold. Consequently, it follows that 
\begin{equation}\label{td}
  (H_\bullet+(T\mathcal{R}\otimes\C))/T\mathcal{R}\otimes\C  
\end{equation}
defines a transverse Dirac structure (see Definition \ref{td def}) of $\mathbb{T}_{\mathcal{R}}M$ whenever the corresponding $L_\bullet$ (cf. \eqref{contact dirac}) is closed under the Courant bracket, where $\bullet\in\{+,-\}$. Here, we are considering the inclusion 
\begin{equation}\label{identi}
    (H_\pm+(T\mathcal{R}\otimes\C))/T\mathcal{R}\otimes\C\subset (TM\oplus\ann(R))\otimes\C/ (T\mathcal{R}\otimes\C)\cong \mathbb{T}_{\mathcal{R}}M\otimes\C\,.
\end{equation}
Thus, we have established the following result.
\begin{theorem}\label{c-d}
    Let $(M^{2n+1},\Phi,E_{\pm})$ be a generalized contact manifold with the Courant involutive bundle $L_\bullet$ (as in \eqref{contact dirac}), where $\bullet\in\{+,-\}$. Then $M$ admits a transverse Courant algebroid $\mathbb{T}_{\mathcal{R}}M$ (see \eqref{subbndl}), and, moreover, a transverse Dirac structure $$(H_\bullet+(T\mathcal{R}\otimes\C))/T\mathcal{R}\otimes\C$$ as given in \eqref{td}. In particular, $\rank((H_\bullet+(T\mathcal{R}\otimes\C))/T\mathcal{R}\otimes\C)=2n$.
\end{theorem}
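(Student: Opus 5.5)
The statement is essentially assembled from earlier results, so the plan is to check its three components in order.

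\emph{Transverse Courant algebroid.} That $\mathbb{T}_{\mathcal{R}}M=\ann(R)\oplus\mathcal{N}_R$ is a transverse Courant algebroid is exactly \eqref{1.2}. There, \eqref{1.1} shows that $R$ preserves $TM\oplus\ann(R)$, and the identities \eqref{1}, \eqref{2} coming from \eqref{eqcc} show that $R$-invariant sections of $TM\oplus\ann(R)$ are closed under the bracket. This closedness passes to the quotient by $T\mathcal{R}$ via the identification $\mathbb{T}_{\mathcal{R}}M\cong (TM\oplus\ann(R))/T\mathcal{R}$. The pairing also descends, because $R$ pairs to zero with $TM\oplus\ann(R)$.

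\emph{Isotropy and rank of $(H_\bullet+(T\mathcal{R}\otimes\C))/T\mathcal{R}\otimes\C$.} By Proposition \ref{prop c}(5), $H_\bullet+(T\mathcal{R}\otimes\C)$ is an isotropic subbundle of rank $2n+1$ contained in $T\mathcal{R}^\perp\otimes\C$. Next I would check that $T\mathcal{R}\otimes\C$ lies inside it and that the sum is a genuine subbundle of constant rank. I would split into two cases, according to whether $R\in C^\infty(L_\bullet)$.
\begin{itemize}
\item If $R\notin C^\infty(L_\bullet)$, Proposition \ref{prop c}(3) gives $\rank H_\bullet=2n$. Moreover $H_\bullet\cap T\mathcal{R}\otimes\C=0$, since $R\notin L_\bullet$ pointwise; this is the argument in the proof of (3). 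So the sum has rank $2n+1$ and the quotient has rank $2n$.
\item If $R\in C^\infty(L_\bullet)$, then $H_\bullet=L_\bullet$ has rank $2n+1$ and already contains $R$. Again the quotient has rank $2n$.
\end{itemize}
Isotropy descends to the quotient since $\langle R,\cdot\rangle$ vanishes on $T\mathcal{R}^\perp$, so the quotient is isotropic in $\mathbb{T}_{\mathcal{R}}M\otimes\C$. Its rank $2n=\frac{\dim M-1}{2}\cdot 2$ matches the maximal-isotropic rank of the complexified $4n$-dimensional bundle, which is the rank condition required in Definition \ref{td def} after complexification.

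\emph{Involutivity.} The remaining condition is closedness of $R$-invariant sections under the bracket. This is Proposition \ref{prop c1}, applied with the Courant involutive $L_\bullet$. I would then push it to the quotient. Given $R$-invariant sections $\bar s_1,\bar s_2$ of the quotient, lift them to $R$-invariant sections $s_1,s_2$ of $H_\bullet+T\mathcal{R}\otimes\C$. Their bracket lies in $C^\infty_{\mathcal{R}}(H_\bullet+T\mathcal{R}\otimes\C)$, and its class is independent of the lifts. The reason is that brackets with $T\mathcal{R}$-sections land in $T\mathcal{R}\otimes\C$: for $f R$ one has $[fR,s]=f[R,s]-\pr_{TM}(s)(f)R+\tfrac12\langle\cdot\rangle$-type $df$ terms, and these terms need checking.

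The step I expect to be the main obstacle is exactly this lift-independence. The $d\langle fR,s\rangle$ term vanishes because $s\in T\mathcal{R}^\perp$. The term $-\pr_{TM}(s)(f)R$ lies in $T\mathcal{R}$. The term $i_{\pr_{TM}(s)}\,d(\text{nothing})$ is absent, since $fR$ has no form part. Hence the bracket is well defined modulo $T\mathcal{R}\otimes\C$, and this completes the proof.
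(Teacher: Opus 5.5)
Your proposal is correct and follows the paper's own argument: \eqref{1.2} gives the transverse Courant algebroid, Proposition~\ref{prop c}(5) gives isotropy and rank $2n+1$ for $H_\bullet+(T\mathcal{R}\otimes\C)$ (so the quotient has rank $2n$), and Proposition~\ref{prop  c1} gives closure of $R$-invariant sections. Your lift-independence check for the quotient bracket only makes explicit what the paper leaves implicit, and it follows directly from the definition of $R$-invariance, $[C^\infty(T\mathcal{R}),s]\subseteq C^\infty(T\mathcal{R})$; your rank case split is redundant given Proposition~\ref{prop c}(5), and the rank $\frac{\dim M-1}{2}$ in Definition~\ref{td def} is simply a slip for the maximal-isotropic rank $2n$, so no \emph{after complexification} reconciliation is needed.
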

\begin{remark}\label{imp rmk1}
Note that, using \eqref{identi}, we have 
\[
     \mathbb{T}_{\mathcal{R}}M\supset (H_\bullet+(T\mathcal{R}\otimes\C))/T\mathcal{R}\otimes\C\cong
        \begin{cases}
           H_\bullet & \text{if $R\not\in C^{\infty}(L_\bullet)$}\,;\\
           E^{(*,*)} & \text{if $R\in C^{\infty}(L_\bullet)$}\,\,,
        \end{cases}
\] 
where $E^{(*,*)}$ is the eigenbundle of $\Phi$ associated with $L_\bullet$. Now, when $R\not\in C^{\infty}(L_\bullet)$,
\begin{align*}
  H_\bullet\cap\overline{H_\bullet}&=(L_\bullet\cap\overline{L_\bullet})\cap(TM\oplus\ann(R))\otimes\C\\
  &=(L_{E_\bullet}\otimes\C)\cap(TM\oplus\ann(R))\otimes\C\\
  &=\{0\}\,.\quad(\text{as $\langle E_\bullet,R\rangle\neq 0$})
\end{align*}
\end{remark}
Assume that $H_\pm$ are Courant involutive whenever the associated subbundles $L_\pm$ are so, respectively. Let $e\in C^\infty(H_\pm)$ and consider $[e,R]\in C^\infty((TM\oplus T^*M)\otimes\C)$. Consider the natural projection map $\pr_{TM}:(TM \oplus T^{\ast}M) \otimes \C \longrightarrow TM \otimes \C $. Using \eqref{eqcc}, for any $f\in C^\infty(H_\pm)$, we  have
$$\pr_{TM}(e)\langle R,f\rangle=\langle [e,R],f\rangle+\langle R,[e,f]\rangle\,\,\,\text{which implies}\,\,\,\langle [e,R],f\rangle=0\,.$$
This is possible because $H_\pm$ are both isotropic and Courant involutive. It follows that $[e,R]\in C^\infty(H_\pm^\perp)$. Applying \eqref{eqcc} with $s_1=s_2=R$ and $s_3=e$, we obtain $[R,e]\in C^\infty(T\mathcal{R}^{\perp}\otimes\C)$. Thus
\begin{equation}\label{eqc1}
[C^\infty(T\mathcal{R}\otimes\C),C^\infty(H_\pm)]\subseteq C^\infty(H_\pm^\perp)\,\,\,\text{and}\,\,\,[C^\infty(T\mathcal{R}\otimes\C),C^\infty(H_\pm)]\subseteq C^\infty(T\mathcal{R}^{\perp}\otimes\C)\,.
\end{equation} Therefore by Proposition \ref{prop c}, we have
\begin{equation}\label{eqc3}
\begin{aligned}
&[C^\infty(T\mathcal{R}\otimes\C),C^\infty(H_\pm)]\subseteq C^\infty(H_\pm)\,;\\
    [C^\infty(H_\pm+&(T\mathcal{R}\otimes\C)),C^\infty(H_\pm+(T\mathcal{R}\otimes\C))]\subseteq C^\infty(H_\pm+(T\mathcal{R}\otimes\C))\,.\\
\end{aligned}    
\end{equation}
In other words, $H_\pm+(T\mathcal{R}\otimes\C)\subset(TM\oplus T^*M)\otimes\C$ are maximal, isotropic, and Courant involutive subbundles. Moreover, let $e\in C^\infty(H^\perp_\pm)$ and $f\in C^\infty(H_\pm)$. Again using \eqref{eqcc}, we get,
\begin{align*}
0=R\langle e,f\rangle&=\langle [R,e],f\rangle+\langle e,[R,f]\rangle\,;\\
&=\langle [R,e],f\rangle\,.\quad(\text{by \eqref{eqc1}})
\end{align*}
Hence 
\begin{equation}\label{eqc2}    [C^\infty(T\mathcal{R}\otimes\C),C^\infty(H^\perp_\pm)]\subseteq C^\infty(H_\pm^\perp)\,\,\,\text{and so,}\,\,\,[C^\infty(H_\pm^\perp),C^\infty(H^\perp_\pm)]\subseteq C^\infty(H_\pm^\perp)\,.
\end{equation}
\begin{theorem}\label{thm c}
    Let $(M^{2n+1},\Phi,E_{\pm})$ be a generalized contact manifold with the Courant involutive bundle $L_\bullet$ (\eqref{contact dirac}), where $\bullet\in\{+,-\}$. Let $\{R,\eta\}$ be the associated nowhere vanishing pair (see \eqref{f-eta}). Denote by $\mathcal{R}$ the foliation induced by $R$. Let $H_\pm,H^\perp_\pm$ be defined as in Proposition \ref{prop c}. Then the following statements are equivalent.
    \vspace{0.2em}
    \begin{enumerate}
    \setlength\itemsep{0.5em}
        \item $H_\bullet$ is Courant involutive.
        \item $[C^\infty(T\mathcal{R}\otimes\C),C^\infty(H_\bullet)]\subseteq C^\infty(H^\perp_\bullet)$.
        \item $H_\bullet+(T\mathcal{R}\otimes\C)$ is a maximal, isotropic and Courant involutive complex subbundle.
        \item $[C^\infty(T\mathcal{R}\otimes\C),C^\infty(H_\bullet+(T\mathcal{R}\otimes\C))]\subseteq C^\infty(H_\bullet+(T\mathcal{R}\otimes\C))$.
        \item $[C^\infty(T\mathcal{R}\otimes\C),C^\infty(H_\bullet)]\subseteq C^\infty(H_\bullet)$.
    \end{enumerate}
In addition, if any one of the equivalent statements holds, we obtain
\vspace{0.2em}
\begin{itemize}
\setlength\itemsep{0.3em}
    \item[i)] $[C^\infty(T\mathcal{R}\otimes\C),C^\infty(H^\perp_\bullet)]\subseteq C^\infty(H_\bullet^\perp)$.
    \item[ii)] $H_\bullet^\perp$ is closed under the Courant bracket.
\end{itemize}
\end{theorem}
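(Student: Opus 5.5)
We prove the cycle $(1)\Rightarrow(2)\Rightarrow(5)\Rightarrow(4)\Rightarrow(3)\Rightarrow(1)$ and then derive i) and ii). The tools are the Courant identities \eqref{eqcc}, the descriptions of $H_\bullet$, $H_\bullet^\perp=L_\bullet+(T\mathcal{R}\otimes\C)$ and $(L_\bullet+T\mathcal{R}\otimes\C)\cap (T\mathcal{R}^\perp\otimes\C)=H_\bullet+(T\mathcal{R}\otimes\C)$ from Proposition \ref{prop c}, and the computations leading to \eqref{eqc1}--\eqref{eqc2}. We treat the case $R\notin C^\infty(L_\bullet)$. If $R\in C^\infty(L_\bullet)$, then $H_\bullet=L_\bullet$ and $T\mathcal{R}\otimes\C\subset L_\bullet$, so all five statements hold trivially by involutivity of $L_\bullet$.

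\textbf{$(1)\Rightarrow(2)$.} This is the computation preceding \eqref{eqc1}. For $e,f\in C^\infty(H_\bullet)$, apply the first identity of \eqref{eqcc} to get $0=\pr_{TM}(e)\langle R,f\rangle=\langle[e,R],f\rangle+\langle R,[e,f]\rangle$. Since $[e,f]\in C^\infty(H_\bullet)\subset T\mathcal{R}^\perp\otimes\C$, the last term vanishes. Hence $[e,R]\perp H_\bullet$, and by skew-symmetry $[R,e]\in C^\infty(H_\bullet^\perp)$. The $C^\infty(M)$-linear extension to $gR$ only adds terms proportional to $R$ and to $\langle R,e\rangle\,dg=0$, which stay in $H^\perp_\bullet$.

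\textbf{$(2)\Rightarrow(5)$.} Combine with the second inclusion of \eqref{eqc1}: $[R,e]\in T\mathcal{R}^\perp\otimes\C$. That inclusion comes from \eqref{eqcc} with $s_1=s_2=R$, and does not use (1). Then Proposition \ref{prop c}(5) gives $[R,e]\in H_\bullet+T\mathcal{R}\otimes\C$. We must still remove the $T\mathcal{R}$ component. We expect this to be the main obstacle, because $H_\bullet$ and $H_\bullet+T\mathcal{R}\otimes\C$ genuinely differ. The first attempt is to write $[R,e]=h+cR$ with $h\in H_\bullet$ and pair with $E_\bullet$ (or with $\eta$). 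Note that $\langle R,E_\bullet\rangle\neq 0$ while $H_\bullet\perp E_\bullet$ (since $H_\bullet\subset L_\bullet$). So it suffices to show $\langle[R,e],E_\bullet\rangle=0$. Expand this via \eqref{eqcc} as $R\langle e,E_\bullet\rangle-\langle e,[R,E_\bullet]\rangle=-\langle e,[R,E_\bullet]\rangle$. Writing $R=E_\bullet+(\text{element of }\overline{L_\bullet}\text{-type correction})$, the term $[E_\bullet,e]$ lies in $L_\bullet$ by involutivity and pairs to zero with $e$. The remaining piece has to be controlled using Proposition \ref{strng}-type arguments or the isotropy of $L_\bullet$. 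If this does not close, we fall back on redefining the order: prove $(2)\Rightarrow(4)$ directly, which only needs $H_\bullet+T\mathcal{R}\otimes\C$, and treat (5) as equivalent to (4) modulo the component along $T\mathcal{R}$, using $[R,R]=0$.

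\textbf{$(5)\Rightarrow(4)$.} This is immediate, since $[R,fR]=R(f)R$. \textbf{$(4)\Rightarrow(3)$.} By Proposition \ref{prop c}(5), $H_\bullet+T\mathcal{R}\otimes\C$ is isotropic of rank $2n+1$, hence maximal isotropic. For involutivity, take $s_1=h_1+aR$ and $s_2=h_2+bR$. Then $[h_1,h_2]\in L_\bullet$ by involutivity of $L_\bullet$, and it lies in $T\mathcal{R}^\perp\otimes\C$ by the argument of \eqref{1}, so it lies in $H_\bullet$. The mixed terms are handled by (4) together with the anchor/Leibniz rule, whose $d\langle\cdot,\cdot\rangle$ corrections vanish by isotropy. \textbf{$(3)\Rightarrow(1)$.} For $h_1,h_2\in H_\bullet$, the bracket $[h_1,h_2]$ lies in $L_\bullet\cap(H_\bullet+T\mathcal{R}\otimes\C)$. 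Since $R\notin L_\bullet$ and $H_\bullet\subset L_\bullet$, this intersection is $H_\bullet$.

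\textbf{Consequences.} Item i) follows exactly as in the derivation of \eqref{eqc2}: for $e\in H_\bullet^\perp$ and $f\in H_\bullet$, we have $0=R\langle e,f\rangle=\langle[R,e],f\rangle+\langle e,[R,f]\rangle$, and the second term vanishes by (5). For ii), write $H_\bullet^\perp=L_\bullet+T\mathcal{R}\otimes\C$. Brackets within $L_\bullet$ stay in $L_\bullet$, and brackets involving $R$ are controlled by i), so $H_\bullet^\perp$ is closed under the Courant bracket.
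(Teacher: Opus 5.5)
The gap is the step $(2)\Rightarrow(5)$, which you leave open. No argument can close it, because (5) is false whenever $R\notin L_\bullet$.

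For $e=Y+\beta\in C^\infty(H_\bullet)$ and $g\in C^\infty(M)$, the Leibniz rule together with $\beta(R)=0$ gives $[gR,e]=g[R,e]-Y(g)\,R$. So (5) forces $Y(g)R\in H_\bullet$ for every $g$, i.e.\ $\pr_{TM}(H_\bullet)=0$. Then $H_\bullet=\ann(R)\otimes\C$, which contradicts $H_\bullet\cap\overline{H_\bullet}=\{0\}$ (Remark~\ref{imp rmk1}).

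It fails even for $s=R$ alone. Your pairing argument correctly reduces $[R,e]\in H_\bullet$ to $[R,E_\bullet]\in H_\bullet^\perp$, but that need not hold. Here is an example on $\R^3$.
\begin{itemize}
\item Take $E_+=e^x\,dz$ and $E_-=e^{-x}\partial_z$. Let $\Phi$ be the symplectic GC structure of $dx\wedge dy$ on $V^\perp=\spn\{\partial_x,\partial_y,dx,dy\}$, extended by $0$ on $V$.
\item Then $L_+=\spn\{dz,\ \partial_x-i\,dy,\ \partial_y+i\,dx\}$ is Courant involutive and $R=e^{-x}\partial_z$.
\item $H_+=\spn\{\partial_x-i\,dy,\ \partial_y+i\,dx\}$ is involutive, so (1)--(4) hold.
\item Yet $[R,\partial_x-i\,dy]=R\notin H_+$.
\item Also $[R,E_+]=i_R\,d(e^x dz)=-dx$, which pairs to $-\tfrac12$ with $\partial_x-i\,dy$.
\end{itemize}

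Your proofs of i) and ii) both go through (5), and both claims are in fact false. In the example, $R,E_+\in H_+^\perp$ but $[R,E_+]\notin H_+^\perp$. More structurally, $H_\bullet^\perp=L_\bullet+T\mathcal{R}\otimes\C$ is not isotropic when $R\notin L_\bullet$, since $\langle R,E_\bullet\rangle\neq 0$. So the term $-\langle s_1,s_2\rangle\,df$ in $[s_1,fs_2]$ would force $T^*M\otimes\C\subseteq H_\bullet^\perp$, i.e.\ $H_\bullet\subseteq TM\otimes\C$, which is impossible.

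The paper's own proof has the same hole.
\begin{itemize}
\item It deduces (5) from \eqref{eqc1} ``by Proposition~\ref{prop c}''. That only gives $[R,e]\in H_\bullet^\perp\cap(T\mathcal{R}^\perp\otimes\C)=H_\bullet+T\mathcal{R}\otimes\C$.
\item It derives \eqref{eqc2} by pairing two elements of the non-isotropic bundle $H_\bullet^\perp$.
\end{itemize}

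What is true is (1)$\Leftrightarrow$(2)$\Leftrightarrow$(3)$\Leftrightarrow$(4).
\begin{itemize}
\item Your steps $(1)\Rightarrow(2)$, $(4)\Rightarrow(3)$ and $(3)\Rightarrow(1)$ are correct and close to the paper's.
\item Your fallback $(2)\Rightarrow(4)$ works directly. Indeed $[gR,e]\in H_\bullet^\perp\cap(T\mathcal{R}^\perp\otimes\C)=H_\bullet+T\mathcal{R}\otimes\C$ by Proposition~\ref{prop c}(5).
\item Condition (5) holds only modulo $T\mathcal{R}\otimes\C$. That weaker form is all the later Bott-connection argument \eqref{eq1.3}--\eqref{eq1.4} needs, since there one only uses $[R,e]\perp H_\bullet$.
\end{itemize}
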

\begin{proof}
    \textbf{$\left((1)\Leftrightarrow (2)\right)$} One direction follows from the preceding discussion (cf. \eqref{eqc1}), and the additional statements also hold in this case; see \eqref{eqc2}. For the converse, assume condition $(2)$. Notice that the bundle $H_\bullet$ is isotropic by Proposition \ref{prop c}. Let $e,f\in C^\infty(H_\bullet)$. Since $e,f\in C^\infty(L_\bullet)$, the section $[e,f]$ belongs to $C^\infty(L_\bullet)$. By \eqref{eqcc},
    $$\langle R,[e,f]\rangle+\langle [e,R],f\rangle=R\langle e,f\rangle=0\,.$$ Since $[e,R]\in C^\infty(H^\perp_\bullet)$, we get that $\langle R,[e,f]\rangle=0$, implying $[e,f]\in C^\infty(TM\oplus\ann(R))$. Therefore, $[e,f]\in C^\infty(H_\bullet)$.
     \medskip
     
    \textbf{$\left((1)\Leftrightarrow (3)\right)$} Assume condition $(1)$. Then condition $(3)$ also follows from the preceding discussion (cf. \eqref{eqc3}). Conversely, suppose condition $(3)$ holds. Let $e,f\in C^\infty(H_\bullet)$. Then $[e,f]=s_1+s_2$ for some $s_1\in C^\infty(H_\bullet)$ and $s_2\in C^\infty(T\mathcal{R}\otimes\C)$. So,
    $$\langle R,[e,f]\rangle=\langle R,s_1\rangle+\langle R,s_2\rangle=0\,,$$ as $s_1,s_2\in C^\infty(TM\oplus\ann(R))$, implying $[e,f]\in C^\infty(TM\oplus\ann(R))$.
    Since $L_\bullet$ is Courant involutive, $[e,f]\in C^\infty(L_\bullet)$ also. Thus $[e,f]\in C^\infty(H_\bullet)$. 
     \medskip
     
    \textbf{$\left((3)\Rightarrow (4)\right)$} Follows from the fact that $(T\mathcal{R}\otimes\C)\subset H_\bullet+(T\mathcal{R}\otimes\C)$.
     \medskip
     
    \textbf{$\left((4)\Rightarrow (2)\right)$} Follows from Proposition \ref{prop c}, using the fact that
    $$H_\bullet\subset H_\bullet+(T\mathcal{R}\otimes\C)\subset L_\bullet+(T\mathcal{R}\otimes\C)\,.$$
   \textbf{$\left((1)\Leftrightarrow (5)\right)$} By equation \eqref{eqc3}, condition $(1)$ implies condition $(5)$. Conversely, if condition $(5)$ holds, it implies condition $(4)$, as $T\mathcal{R}\otimes\C$ is Lie involutive. It results in condition $(1)$.
\end{proof}
\begin{remark}
There are some similar results in \cite[Propositions 4.9 and Theorem 4.1]{zambon1} related to conditions $(3)$ and $(4)$ in Theorem \ref{thm c}. However, the equivalent conditions $(1)$, $(2)$ and $(5)$ in Theorem \ref{thm c} are different and are more closely related to the intrinsic properties of generalized contact structures. Furthermore, using techniques of Proposition \ref{prop c} and Theorem \ref{thm c}, one may obtain analogous results to those of Theorem \ref{thm c} in the setting of real exact Courant algebroids and their isotropic subbundles, under suitable constant rank conditions on the intersection of bundles. 
\end{remark}
\begin{remark}
    Another sufficient condition for $H_\bullet$ in Theorem \ref{thm c} to be Courant involutive, is that $R$ preserves $C^{\infty}(L_\bullet)$ (cf. Definition \ref{imp def}), that is,
    \begin{equation}\label{inv1}
     [C^{\infty}(T\mathcal{R}\otimes\C),C^{\infty}(L_\bullet)]\subseteq C^{\infty}(L_\bullet)\,.
    \end{equation}
    This is immediate, as condition $(3)$ in Theorem \ref{thm c} holds after applying the equation \eqref{1.1}. However, converse may not be true, that is, when $R\not\in C^{\infty}(L_\bullet)$, it is still possible for $H_\bullet$ to be Courant involutive, without $L_\bullet$ being preserved by $R$. The following two examples demonstrate when this is always the case and when it is not. 
\end{remark}
\begin{example}
    Let $(M, \Phi, E_{\pm})$ be a generalized contact manifold of Poon-Wade type, with $E_-=R$, $E_+=\eta$, and $L_-$ the Courant involutive subbundle. Since $[C^{\infty}(T\mathcal{R}\otimes\C),C^{\infty}(L_-)]\subseteq C^{\infty}(L_-)$, the subbundle $H_-$ is Courant involutive.  Consider a closed real form $B\in\Omega^2(M)$ such that $B(R)\neq0$. Then $(e^B\Phi e^{-B},e^B E_{\pm})$ is a generalized contact structure with $e^BL_-$ as the corrsponding Courant involutive subbundle; see Example \ref{B-contact}. Using \cite[Proposition~3.23]{Gua}, we have $$[e^BC^{\infty}(H_-),e^BC^{\infty}(H_-)]\subseteq e^B[C^{\infty}(H_-),C^{\infty}(H_-)]$$ and so, $e^BH_-$ is also Courant involutive. Note that $e^B(T\mathcal{R}\otimes\C)=T\mathcal{R}\otimes\C$. It follows that
    \begin{align*}
        [C^{\infty}(T\mathcal{R}\otimes\C),e^BC^{\infty}(L_-)]&=[e^BC^{\infty}(T\mathcal{R}\otimes\C),e^BC^{\infty}(L_-)]\\
        &\subseteq e^B[C^{\infty}(T\mathcal{R}\otimes\C),C^{\infty}(L_-)]\\
        &\subseteq e^BC^{\infty}(L_-)
    \end{align*}
Hence, every generalized contact manifold of Poon-Wade type, as well as its $B$-field transformations, satisfies Condition \eqref{inv1}.
\end{example}
\begin{example}
Let $(\Phi,E_\pm)$ be the normal generalized contact structure on $\R^3$, as defined in Example \ref{new eq} where $E_+:=\frac{\partial}{\partial z}+dx$, and $E_-:=\frac{1}{2}(dz+\frac{\partial}{\partial x})$. Then the corresponding bundles $L_\pm$ (cf. \eqref{contact dirac}) are given by
$$L_\pm=\spn\left\{E_\pm\,,\frac{\partial}{\partial x}-dz-i\sqrt{2}dy\,,\frac{\partial}{\partial z}-dx+i\sqrt{2}\frac{\partial}{\partial y}\right\}\,.$$
In this case, $R=\frac{1}{2}\frac{\partial}{\partial x}+\frac{\partial}{\partial z}$. Choose a smooth function $f\in C^\infty(\R^3)$ such that $2\frac{\partial f}{\partial z}+\frac{\partial}{\partial x}\neq 0$. Then $$[R,f\left(\frac{\partial}{\partial x}-dz-i\sqrt{2}dy\right)]=-\left(\frac{\partial f}{\partial z}+\frac{1}{2}\frac{\partial}{\partial x}\right)(dz+i\sqrt{2}dy)\,.$$ 
Since $\langle E_+,dz+i\sqrt{2}dy\rangle\neq0$, we get $[C^{\infty}(T\mathcal{R}\otimes\C),C^{\infty}(L_+)]\not\subset C^{\infty}(L_+)$. Similarly, we can also get that $[C^{\infty}(T\mathcal{R}\otimes\C),C^{\infty}(L_-)]\not\subset C^{\infty}(L_-)$. Now, a straightforward computation shows that $H_\pm$ are of the form
\begin{align*}
 &H_+=\spn\left\{\sqrt{2}\frac{\partial}{\partial z}+i\frac{\partial}{\partial y}\,,\frac{\partial}{\partial x}+\frac{\partial}{\partial z}+dx-dz-i\sqrt{2}dy\right\}\,;\\
 &H_-=\spn\left\{\sqrt{2}\frac{\partial}{\partial x}-idy\,,\frac{\partial}{\partial x}+\frac{\partial}{\partial z}-dx+dz+i\sqrt{2}\frac{\partial}{\partial y}\right\}\,.
\end{align*}
It follows that both bundles $H_+,H_-$ are Courant involutive.
\end{example}
Although the following Lemma may be a well-known fact, we present a detailed argument here for the convenience of the reader.
\begin{lemma}\label{imp lemma1}
Let $f:N'\longrightarrow N$ be a surjective submersion (with connected fibers) between smooth manifolds. Let $E \to N'$ be a smooth vector bundle over $N'$. Then $E$ descends to a smooth vector bundle $F \to N$ (i.e., $E \cong f^* F$) 
if and only if there exists a flat partial connection on $E$ along the foliation $\mathcal{F'} := \ker(df)$, that is, $\mathcal{F'}$-connection, with trivial holonomy along the fibers of $f$.
\end{lemma}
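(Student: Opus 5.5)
We prove the two implications separately. Throughout, a flat partial connection along $\mathcal{F}'=\ker(df)$ means an operator $\nabla:C^\infty(\mathcal{F}')\times C^\infty(E)\to C^\infty(E)$, $C^\infty(N')$-linear in the first slot, Leibniz in the second (with respect to $\mathcal{F}'$-derivatives only), and with curvature $\nabla_X\nabla_Y-\nabla_Y\nabla_X-\nabla_{[X,Y]}=0$.

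\emph{Necessity.} Suppose $E\cong f^*F$. Any local frame $\{e_j\}$ of $F$ over $U\subset N$ pulls back to a frame $\{f^*e_j\}$ of $E$ over $f^{-1}(U)$. We define
$$\nabla_X\Big(\sum_j a_j\, f^*e_j\Big):=\sum_j X(a_j)\, f^*e_j, \qquad X\in C^\infty(\mathcal{F}').$$
This is independent of the frame, because the transition functions between two such frames are pulled back from $N$ and are therefore annihilated by every $X\in C^\infty(\mathcal{F}')$. The connection is flat since its connection form vanishes in these frames. Its holonomy along any leafwise path is the identity, since parallel sections are exactly the pullbacks $f^*s$.

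\emph{Sufficiency.} We use connectedness of the fibres and the local triviality of the foliation. Since $f$ is a submersion, each point has a chart $W\cong U\times V$ in which $f$ is the projection to $U$, and the leaves of $\mathcal{F}'$ restricted to $W$ are the slices $\{u\}\times V$; we may take $V$ contractible. Flatness and contractibility of $V$ give a frame of $E|_W$ consisting of $\nabla$-parallel sections, obtained by solving $\nabla s=0$ along the plaques, a Frobenius-type argument. We then set
$$F_x:=\{\text{parallel sections of } E \text{ along } f^{-1}(x)\}\quad\text{for } x\in N.$$
Evaluation at any point $y\in f^{-1}(x)$ gives a bijection $F_x\to E_y$: injectivity is uniqueness of parallel transport, and surjectivity follows because the fibre is connected and the holonomy is trivial, so parallel transport from $y$ is path-independent and extends any $v\in E_y$ to a global parallel section on the fibre.

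The key step is to give $F=\bigsqcup_{x\in N}F_x$ a smooth vector bundle structure. Choose a local section $\sigma:U\to N'$ of $f$, which exists because $f$ is a submersion. Then $F|_U\cong\sigma^*E$ via $s\mapsto s(\sigma(x))$. For two local sections $\sigma$ and $\sigma'$, the transition map $\sigma^*E\to\sigma'^*E$ is parallel transport within the fibres. We must show this map depends smoothly on $x$, and this is the main obstacle, since parallel transport is along paths that vary from fibre to fibre. We intend to handle it by covering a path in the fibre from $\sigma(x_0)$ to $\sigma'(x_0)$ with finitely many product charts and composing the local parallel frames. Trivial holonomy guarantees that the result is independent of the chosen chain of charts, and this independence also holds for nearby $x$. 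Smoothness then follows from smooth dependence of solutions of ODEs on parameters.

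Finally, the map $f^*F\to E$ given by $(y,s)\mapsto s(y)$ is fibrewise the evaluation bijection above. It is smooth and fibrewise linear, so it is an isomorphism of vector bundles, which gives $E\cong f^*F$.
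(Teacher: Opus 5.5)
Your proposal is correct and follows essentially the same route as the paper: the vertically trivial pulled-back connection for necessity, and for sufficiency a fibrewise identification by path-independent parallel transport, trivializations via local sections of $f$, and smoothness of the fibrewise transport maps from smooth parameter dependence of linear ODEs. The differences are refinements rather than a new approach: defining $F_x$ intrinsically as parallel sections over the fibre makes the cocycle condition automatic, and your chain-of-product-charts argument justifies the smoothness step that the paper handles by simply positing a smooth family of fibrewise paths $\gamma_y$.
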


\begin{proof}
Let $\mathcal{F'} := \ker(df) \subset TN'$ denote the foliation of $N'$ by fibers of $f$.  
\vspace{0.3em}

\textbf{($\Rightarrow$)} Suppose $E = f^* F$ for some smooth vector bundle $F \to N$, and let $\nabla^F$ be any connection on $F$. Define a partial connection (that is, $\mathcal{F'}$-connection) $\nabla^\mathcal{F'}$ on $E$ along $\mathcal{F'}$
\[
\nabla^\mathcal{F'}_X s := (f^* \nabla^F)_X s\,\,\,\text{for $X \in C^\infty(\mathcal{F'})$ and $s \in C^\infty(E)$\,.}
\] 
Note that $\nabla^\mathcal{F'}$ is the pullback connection $\nabla := f^* \nabla^F$ on $E$ restricted to $\mathcal{F'}$. Since $df|_{\mathcal{F'}}= 0$, the curvature $R^{\nabla^\mathcal{F'}}$ along $\mathcal{F'}$ vanishes, that is, $R^{\nabla^\mathcal{F'}}(X,Y)=0$ for $X,Y\in C^\infty(\mathcal{F'})$. In other words, it is flat. Moreover, parallel transport along any path $\gamma$ contained in a fiber is trivial, because the covariant derivative along any vertical vector vanishes. Thus, the holonomy along fibers is trivial. This gives a flat partial connection along fibers with trivial holonomy.
\medskip

\textbf{($\Leftarrow$)} Assume there exists a flat partial connection ($\mathcal{F'}$-connection) $\nabla^\mathcal{F'}$ on $E$ along $\mathcal{F'}$ with trivial holonomy. Let $y \in N$ and $x, x' \in f^{-1}(y)$ in the same fiber (leaf of $\mathcal{F'}$). For any smooth path $\gamma$ in $f^{-1}(y)$ from $x$ to $x'$, let
\[
P_\gamma: E_x \to E_{x'}
\]
denote the parallel transport along $\gamma$ via $\nabla^\mathcal{F'}$.  The flatness of $\nabla^\mathcal{F'}$ implies $P_\gamma$ depends only on the homotopy class of $\gamma$ inside the fiber. Furthermore, its trivial holonomy implies $P_\gamma$ is independent of the path. Hence, there is a canonical linear isomorphism
\begin{equation}\label{iso para}
    \phi_y:E_x \cong E_{x'}
\end{equation}
which depends smoothly on $x$ and $x'$. For $y \in N$, pick any $x \in f^{-1}(y)$ and define
\[
F_y := E_x\quad\text{and}\quad F:=\bigsqcup_{y\in N}F_y\,,
\]
using the canonical identification along the fiber. Then it is well-defined. Since the map $f:N'\longrightarrow N$ is a surjective submersion, it admits local smooth sections $s_U: U \subset N \to M$ of $f$. Define local trivializations of $F$ over $U$ by
$$F|_U := s_U^* E\,.$$
On overlaps $U \cap V$, the identification between different local sections is given by parallel transport along fibers, which is smooth and satisfies the cocycle condition due to flatness and trivial holonomy. This can be seen as follows.
\medskip 

Let $s,t: U \to M$ be smooth local sections of $f$. For each $y \in U$, consider the points $s(y), t(y) \in f^{-1}(y)$. It follows from \eqref{iso para} that there exists an unique canonical linear isomorphism 
$$\phi_y : E_{s(y)} \to E_{t(y)}$$
given by parallel transport along any path $\gamma$ in the fiber $f^{-1}(y)$ connecting $s(y)$ and $t(y)$ for all $y \in U$, and independent of path $\gamma$. Let $y \mapsto \gamma_y$ be a smooth family of paths in the fibers connecting $s(y)$ to $t(y)$. It induces a smooth family $y \mapsto \phi_y$, because in local product coordinates for $f$, the parallel transport maps are solutions of linear ODEs whose coefficients depend smoothly on the base point. This smooth dependence on parameters implies the resulting identifications vary smoothly (cf. \cite[Chapter~IV]{lang} or \cite[Chapter~9]{lee}). So, we obtain a canonical vector bundle isomorphism
\[
\hat\phi: s^* E \longrightarrow t^* E\quad\text{defined by\,\,$\hat{\phi}(y,v)=(y,\phi_y(v))$\,.}
\]
 For the cocycle condition on triple overlaps, let $s,t,r: U \to M$ be three local sections. Denote the canonical identifications along fibers by
\[
\hat\phi_{s,t}: s^* E \to t^* E, \quad\hat\phi_{t,r}: t^* E \to r^* E, \quad \hat\phi_{s,r}: s^* E \to r^* E\,.
\]
Take $v \in E_{s(y)}$. Since parallel transport along concatenated paths in a fiber is path-independent due to trivial holonomy, the composition of the parallel transports from $s(y)$ to $t(y)$ and from $t(y)$ to $r(y)$ yields a parallel transport from $s(y)$ to $r(y)$. Therefore,
$$\hat\phi_{t,r} \circ\hat\phi_{s,t} (v)=\hat\phi_{s,r}(v)\,.$$    
Therefore, the cocycle condition holds. 

\medskip
Hence, after gluing, we get that $F\to N$ admits a smooth vector bundle structure. For any $x \in M$ with $y = f(x)$, the fiber of the pullback bundle satisfies $(f^* F)_x = F_y = E_x$, under the canonical identification. Therefore, by construction of $F$, this defines a smooth vector bundle isomorphism between $E$ and $f^*F$. Hence $E$ descends to the vector bundle $F \to N$.
\end{proof}

\begin{remark}
The partial connection $\nabla^\mathcal{F'}$ is analogous to the \emph{Bott connection} (cf. \cite[Section 6]{bott71}) on the normal bundle of a foliation. Flatness along the leaves with trivial holonomy ensures that fibers of $E$ along each leaf are canonically identified, which is exactly the condition needed for descent.
\end{remark}
Consider the Bott connection, denoted by $\nabla$, on $\mathcal{N}_R$ (cf. \eqref{nrml bndl}) which is flat; see \cite[Lemma 6.2 \& 6.3]{bott71}. It follows that $\nabla$ induces a flat partial connection ($T\mathcal{R}$-connection) $\nabla^*$ on $\ann(R)$ by
$$\nabla_X^*\xi=\mathcal{L}_X\xi\,\,\,\text{for all $X\in C^\infty(T\mathcal{R})$, $\xi\in C^\infty(\ann(R))$}\,,$$
since $\mathcal{N}_R^*\cong\ann(R)\subset T^*M$. Consequently, we naturally obtain flat partial connections ($T\mathcal{R}$-connections), on $\mathbb{T}_{\mathcal{R}}M$ (cf. \eqref{subbndl}) and on its complexification $\mathbb{T}_{\mathcal{R}}M\otimes\C$, namely $\nabla\oplus\nabla^*$. For simplicity, both $T\mathcal{R}$-connections will be denoted by $\nabla^{\mathcal{R}}$. Suppose that the leaf space $M/\mathcal{R}$ is a smooth manifold, that is, $(M^{2n+1},\Phi,E_{\pm})$ is regular; see Definition \ref{def:reg}. Then the holonomy of $\nabla$ is trivial. Consequently, the induced partial connection $\nabla^{\mathcal{R}}$ on $\mathbb{T}_{\mathcal{R}}M$ has trivial holonomy. Therefore, by Lemma \ref{imp lemma1}, $\mathbb{T}_{\mathcal{R}}M$ descends to a smooth
vector bundle $E_{red}\to M/\mathcal{R}$, that is, $\mathbb{T}_{\mathcal{R}}M\cong\Psi^*E_{red}$. Here $\Psi:M\longrightarrow M/\mathcal{R}$ is the canonical projection as defined in \eqref{psi}. Now, it follows from the proof of \textit{Theorem 3.7} in \cite{zambon} that the bundle $E_{red}$ is, in fact, an untwisted exact Courant algebroid (cf. \cite{burst,zambon})  over $M/\mathcal{R}$, reduced from $\mathbb{T}_{\mathcal{R}}M$, namely, $E_{red}=T(M/\mathcal{R})\oplus T^*(M/\mathcal{R})$; see also \cite[Example 3.11]{burst} \cite[Example 3.9]{zambon}). In particular, we also get $$\mathbb{T}_{\mathcal{R}}M\cong\Psi^*(T(M/\mathcal{R}))\oplus\Psi^*(T^*(M/\mathcal{R})),\,\,\,\text{and}\,\,\,C^{\infty}(E_{red})\cong C^{\infty}_{\mathcal{R}}(T\mathcal{R}^{\perp})/C^{\infty}(T\mathcal{R})\,.$$
Further assume that $[C^\infty(T\mathcal{R}\otimes\C),C^\infty(H_\bullet)]\subseteq C^\infty(H_\bullet)$ where $H_\bullet$ as in Theorem \ref{thm c}, and let $X+\xi,Y+\beta\in C^\infty(H_\bullet)$. Consider the identification in \eqref{identi}. Then, applying Remark \ref{imp rmk1}, $H_\bullet$ can be identified as a subbundle of $\mathbb{T}_{\mathcal{R}}M\otimes\C$, when $R\not\in C^{\infty}(L_\bullet)$, namely $H_\bullet\cong (H_\bullet+(T\mathcal{R}\otimes\C))/T\mathcal{R}\otimes\C$. Then  
\begin{equation}\label{eq1.3}
\begin{aligned}
\langle\nabla^{\mathcal{R}}_R(X+\xi),Y+\beta\rangle&=\langle\nabla_RX+\nabla^{*}_R\xi,Y+\beta\rangle\\
&=\frac{1}{2}(\beta(\nabla_RX)+\mathcal{L}_R\xi(Y))\\
&=\frac{1}{2}(\beta([R,X])+\mathcal{L}_R\xi(Y))\quad(\text{as\, $\beta\in C^\infty(\ann(R)$}))\\
&=\langle [R,X]+\mathcal{L}_R\xi,Y+\beta\rangle\\
&=\langle [R, X+\xi],Y+\beta\rangle\\
&=0\,.\quad(\text{as\, $H_\bullet$ is isotropic)}
\end{aligned}
\end{equation}
Therefore, $\nabla^{\mathcal{R}}_R(X+\xi)\in C^\infty(H^\perp_\bullet)$, and by Proposition \ref{prop c}, it implies that $\nabla^{\mathcal{R}}_R(X+\xi)\in C^\infty(H_\bullet)$. Hence
\begin{equation}\label{eq1.4}
\nabla^{\mathcal{R}}_ss'\in C^\infty((H_\bullet+(T\mathcal{R}\otimes\C))/T\mathcal{R}\otimes\C)\,,
\end{equation}
for all $s'\in C^\infty((H_\bullet+(T\mathcal{R}\otimes\C))/T\mathcal{R}\otimes\C)$, $s\in C^\infty(T\mathcal{R}\otimes\C)$. 
\medskip

Now suppose $R\in C^{\infty}(L_\bullet)$. In this case, we observe that $H_\bullet=L_\bullet$ (cf.\eqref{eq1.5}). This implies that the eigenbundle $E^{(*,*)}$ (cf. \eqref{contact dirac}) of  $\Phi$ associated with $L_\bullet$, can be identified as a subbundle of $\mathbb{T}_{\mathcal{R}}M\otimes\C$, as $E^{(*,*)}\cong (L_\bullet+(T\mathcal{R}\otimes\C))/T\mathcal{R}\otimes\C$; cf. Remark \ref{imp rmk1}. Here $(*,*)\in\{(1,0),(0,1)\}$. By assumption and Theorem \ref{thm c}, we get $[C^\infty(T\mathcal{R}\otimes\C),C^\infty(E^{(*,*)})]\subseteq C^\infty(E^{(*,*)})$ when $E^{(*,*)}\subset\mathbb{T}_{\mathcal{R}}M\otimes\C$ is regarded as a subbundle. Proceeding in a similar manner, we obtain equation \eqref{eq1.4}, since $E^{(*,*)}$ is isotropic.
\medskip

Hence the bundle $(H_\bullet+(T\mathcal{R}\otimes\C))/T\mathcal{R}\otimes\C$ admits the flat $T\mathcal{R}$-connection $\nabla^\mathcal{R}$ with trivial holonomy. By Lemma \ref{imp lemma1}, this bundle descends to a smooth complex subbundle $L_{red}\subset E_{red}\otimes\C$ over $M/\mathcal{R}$, so that $(H_\bullet+(T\mathcal{R}\otimes\C))/T\mathcal{R}\otimes\C\cong\Psi^*L_{red}$. In fact, the subbundle $L_{red}$ is Courant involutive and maximal isotropic, as it is reduced from $H_\bullet+(T\mathcal{R}\otimes\C)$ which is Courant involutive by Theorem \ref{thm c}, and it also satisfies $L_{red}\cap\overline{L_{red}}=\{0\}$ by Remark \ref{imp rmk1}. Therefore, $L_{red}$ defines a generalized complex structure (cf. Definition \ref{gcs}), denoted by $\mathcal{J}_{red}$ on $M/\mathcal{R}$, whose $+i$-eigenbundle is precisely $L_{red}$. Thus, we have proved the following.
\begin{theorem}\label{thm c1}
Let $(M^{2n+1}, \Phi, E_{\pm})$ be a generalized contact manifold with Courant involutive bundle $L_\bullet$ (cf. \eqref{contact dirac}), where $\bullet\in\{+,-\}$. Let $\{R, \eta\}$ be the corresponding nowhere vanishing pair (see \eqref{f-eta}), with the foliation $\mathcal{R}$ induced by $R$. Let $H_\bullet$ be the bundle associated with $L_\bullet$, as defined in Proposition \ref{prop c}. Let $\mathbb{T}_{\mathcal{R}}M$ be defined as in \eqref{subbndl}. Assume that $H_\bullet$ is Courant involutive. Then,
\vspace{0.2em}
\begin{enumerate}
\setlength\itemsep{0.5em}
    \item Both vector bundles  $\mathbb{T}_{\mathcal{R}}M$ and $(H_\bullet+(T\mathcal{R}\otimes\C))/T\mathcal{R}\otimes\C$ admit partial connections along $\mathcal{R}$, that is, $T\mathcal{R}$-connections, which are flat and have trivial holonomy. In fact, the $T\mathcal{R}$-connection on $\mathbb{T}_{\mathcal{R}}M\otimes\C$ restricts naturally to $(H_\bullet+(T\mathcal{R}\otimes\C))/T\mathcal{R}\otimes\C$.
    \item The normal bundle $\mathcal{N}_R$ (cf. \eqref{nrml bndl}) admits a generalized complex structure. More precisely, $(H_\bullet+(T\mathcal{R}\otimes\C))/T\mathcal{R}\otimes\C$ induces a generalized complex structure on $\mathbb{T}_{\mathcal{R}}M\otimes\C$.
\end{enumerate}
\vspace{0.2em} 
Furthermore, if $(M^{2n+1}, \Phi, E_{\pm})$ is also regular so that $\mathcal{R}$ is simple, the following holds.
\vspace{0.2em}
\begin{enumerate}
\setlength\itemsep{0.5em}
    \item[(3)] $\mathbb{T}_{\mathcal{R}}M$ descends to a smooth vector bundle $E_{red}\to M/\mathcal{R}$, that is, $\mathbb{T}_{\mathcal{R}}M\cong\Psi^*E_{red}$. Moreover, $E_{red}\cong T(M/\mathcal{R})\oplus T^*(M/\mathcal{R})$ with the symmetric bilinear form and the Courant bracket defined as in \eqref{bilinear} and \eqref{bracket}, respectively.
    \item[(4)]  $(H_\bullet+(T\mathcal{R}\otimes\C))/T\mathcal{R}\otimes\C$ descends to a maximal isotropic complex subbundle $L_{red}\to M/\mathcal{R}$ of $E_{red}\otimes\C$ which is Courant involutive, and satisfies $L_{red}\cap\overline{L_{red}}=\{0\}$. In other words, $(H_\bullet+(T\mathcal{R}\otimes\C))/T\mathcal{R}\otimes\C\cong\Psi^*L_{red}$.
    \item[(5)] $M/\mathcal{R}$ admits a generalized complex structure $\mathcal{J}_{red}$.
\end{enumerate}
Here, $M/\mathcal{R}$ denotes the leaf space of the foliation $\mathcal{R}$, and $\Psi:M\longrightarrow M/\mathcal{R}$ is the canonical projection, defined as in \eqref{psi}.
\end{theorem}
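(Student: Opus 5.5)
The statement of Theorem \ref{thm c1} largely collects the preceding discussion, so the plan is to assemble it in the order (1), (2), then (3)--(5), citing the earlier results.

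\textbf{Partial connections, item (1).} On $\mathcal{N}_R$ we take the Bott connection $\nabla_R\bar X=\overline{[R,X]}$. It is flat by \cite[Lemmas 6.2, 6.3]{bott71}, and it is well defined on the quotient because $[R,fR]\in C^\infty(T\mathcal{R})$. Its dual is $\nabla^*_R\xi=\mathcal{L}_R\xi$ on $\ann(R)$. This preserves $\ann(R)$, since $(\mathcal{L}_R\xi)(R)=R(\xi(R))-\xi([R,R])=0$. The sum $\nabla^{\mathcal{R}}=\nabla\oplus\nabla^*$ is then a flat $T\mathcal{R}$-connection on $\mathbb{T}_{\mathcal{R}}M$. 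Since $T\mathcal{R}$ has rank one, flatness is automatic.

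To restrict $\nabla^{\mathcal{R}}$ to $(H_\bullet+T\mathcal{R}\otimes\C)/T\mathcal{R}\otimes\C$, we use Theorem \ref{thm c}(5), which gives $[C^\infty(T\mathcal{R}\otimes\C),C^\infty(H_\bullet)]\subseteq C^\infty(H_\bullet)$. Via \eqref{eq1.3}, the covariant derivative $\nabla^{\mathcal{R}}_R(X+\xi)$ agrees with $[R,X+\xi]$ modulo $T\mathcal{R}$. We then split into two cases, following Remark \ref{imp rmk1}. If $R\notin C^\infty(L_\bullet)$, Proposition \ref{prop c} identifies the quotient with $H_\bullet$. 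If $R\in C^\infty(L_\bullet)$, then $H_\bullet=L_\bullet$ and the quotient is $E^{(*,*)}$. In both cases the restricted connection is again flat.

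For trivial holonomy, a holonomy-free statement only makes sense along leaves, so we use regularity. When $\mathcal{R}$ is simple the leaves are closed fibers of $\Psi$, and the linear holonomy of the Bott connection is trivial, as in the tubular-neighbourhood discussion before Theorem \ref{thm2}. Trivial holonomy on the ambient bundle then passes to any invariant subbundle. Stating item (1) without regularity needs care: it is local along leaves, and only flatness is unconditional.

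\textbf{Item (2).} This is pointwise linear algebra. The quotient $(H_\bullet+T\mathcal{R}\otimes\C)/T\mathcal{R}\otimes\C$ has rank $2n$ by Theorem \ref{c-d}, which is half the rank $4n$ of $\mathbb{T}_{\mathcal{R}}M\otimes\C$. It is isotropic by Proposition \ref{prop c}(5). It satisfies $L\cap\overline L=0$ by Remark \ref{imp rmk1} in the first case, and by $E^{(1,0)}\cap E^{(0,1)}=0$ in the second. Hence it is the $+i$-eigenbundle of an orthogonal complex structure on $\mathbb{T}_{\mathcal{R}}M$. The transverse involutivity needed to call it a generalized complex structure comes from Proposition \ref{prop  c1}, or more strongly from Theorem \ref{thm c}(3).

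\textbf{Descent, items (3)--(5).} We apply Lemma \ref{imp lemma1} to $\Psi$ (fibers connected because they are leaves). For $\mathbb{T}_{\mathcal{R}}M$ this gives $E_{red}$. To identify $E_{red}\cong T(M/\mathcal{R})\oplus T^*(M/\mathcal{R})$, we show that $\nabla$-parallel sections of $\mathcal{N}_R$ are exactly the $\Psi$-projectable vector fields modulo $R$, and that $\nabla^*$-parallel sections of $\ann(R)$ are exactly the basic $1$-forms, that is, pullbacks $\Psi^*\alpha$. Parallel sections therefore correspond to $C^\infty_{\mathcal{R}}(T\mathcal{R}^\perp)/C^\infty(T\mathcal{R})$. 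On such sections, the pairing \eqref{bilinear} and the bracket \eqref{bracket} are $\Psi$-related to those on $M/\mathcal{R}$, by naturality of the Lie derivative and of $d$ under $\Psi^*$. This is the reduction of \cite[Theorem 3.7]{zambon}.

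Next, the descent of the subbundle gives $L_{red}$. Isotropy, rank $2n$, and $L_{red}\cap\overline{L_{red}}=0$ all descend pointwise. For involutivity, any local section of $L_{red}$ lifts to a basic section of $H_\bullet+T\mathcal{R}\otimes\C$. The brackets of such lifts stay in $C^\infty_{\mathcal{R}}(H_\bullet+T\mathcal{R}\otimes\C)$ by Proposition \ref{prop  c1} together with Theorem \ref{thm c}(3), and they project to the bracket downstairs. This gives (4), and (5) follows from Definition \ref{gcs}(2).

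\textbf{Main obstacle.} The hard step is the compatibility of brackets in (3)--(4). One must check that basic sections span $L_{red}$ locally, which is where trivial holonomy is used, and that the Courant bracket of basic representatives is independent of the chosen lifts modulo $T\mathcal{R}$. The terms of the form $[R,s]$ and $d\langle R,s\rangle$ are exactly what must vanish, and this is guaranteed by \eqref{1}--\eqref{2}.
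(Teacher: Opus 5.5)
Your outline follows the paper's own route: the Bott connection on $\mathcal{N}_R$ with its dual $\mathcal{L}_R$ on $\ann(R)$, restriction to the quotient via Theorem \ref{thm c}(5), descent by Lemma \ref{imp lemma1}, and Zambon's reduction to identify $E_{red}$. Two of your points improve on the paper. Observing directly that $\nabla^{\mathcal{R}}_R$ of a class is the class of the Courant bracket with $R$ is cleaner than the paper's detour through $H_\bullet^\perp$. Your caveat that trivial holonomy in (1) needs regularity is also correct: a closed Reeb orbit whose linearized return map is not the identity gives nontrivial Bott holonomy.

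\textbf{The gap.} The step that fails is $L\cap\overline{L}=0$ in (2) and (4). You take it from Remark \ref{imp rmk1}, which rests on the unproved claim $\langle E_\bullet,R\rangle\neq0$; the paper has the same gap. Since $\langle E_\bullet,R\rangle=\frac12\eta_\bullet(R)$, it can vanish while $R\notin C^\infty(L_\bullet)$. In that case $E_\bullet$ is a real section of $H_\bullet$ that is not in $T\mathcal{R}$, so its class is a nonzero real element of the quotient.

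\textbf{Counterexample.} On $\R^3$ with coordinates $(x,y,t)$, take the product structure of Example \ref{prodct1} over $(\R^2,dx\wedge dy)$ and apply the closed $B$-field $B=dx\wedge dt$ (Example \ref{B-contact}). This gives $E_+=dt$, $E_-=\frac{\partial}{\partial t}-dx$, $R=\frac{\partial}{\partial t}$ and $\eta=dt-dx$. Everything has constant coefficients, so $L_-$ and $H_-$ are Courant involutive, $\mathcal{R}$ is simple, $R$ is complete and $[R,\eta]=0$. Yet $H_-=\spn\{\frac{\partial}{\partial t}-dx,\ \frac{\partial}{\partial y}+i\,dx\}$. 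The reduced bundle on $\R^2$ is therefore $L_{red}=\spn\{dx,\frac{\partial}{\partial y}\}\otimes\C=\overline{L_{red}}$, which is not a GC structure. For $\bullet=+$ the same example does return the symplectic structure.

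\textbf{The fix.} You need an extra hypothesis, for instance that $\eta_\bullet(R)$ vanishes nowhere when $R\notin C^\infty(L_\bullet)$; the case $R\in C^\infty(L_\bullet)$ is fine. Even with that hypothesis, identifying the quotient with $H_\bullet$ and quoting $H_\bullet\cap\overline{H_\bullet}=0$ is not enough. What is actually required in the quotient is $H_\bullet\cap\bigl(\overline{H_\bullet}+T\mathcal{R}\otimes\C\bigr)=0$. This does follow once $\langle E_\bullet,R\rangle\neq0$. Both $L_\bullet$ and $\overline{L_\bullet}$ are isotropic and contain the real section $E_\bullet$, so $H_\bullet+\overline{H_\bullet}\subseteq E_\bullet^\perp\otimes\C$, and this space does not contain $R$. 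With this added, the rest of your argument goes through: the restriction of the connection, the descent, and the involutivity of $L_{red}$ via basic lifts.
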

Continue with the regular generalized contact manifold $(M^{2n+1}, \Phi, E_{\pm})$, as in Theorem \ref{thm c1}. Let  
\begin{equation}\label{pr}
\begin{aligned}
 &\pr_{TM}:(TM\oplus T^*M)\otimes\C\longrightarrow TM\otimes\C\,;\\
 &\pr_{T(M/\mathcal{R})}:(T(M/\mathcal{R})\oplus T^*(M/\mathcal{R}))\otimes\C\rightarrow T(M/\mathcal{R})\otimes\C\,.
\end{aligned}
\end{equation}
be the projection maps onto $TM\otimes\C$ and $T(M/\mathcal{R})\otimes\C$, respectively. Consider the identification in \eqref{identi} and set
\begin{equation}\label{k}
\begin{aligned}
 &K_\bullet:=\pr_{TM}\left((H_\bullet+(T\mathcal{R}\otimes\C))/T\mathcal{R}\otimes\C\right)\subset\mathcal{N}_R\otimes\C\,;\\
 &K_{red}:=\pr_{T(M/\mathcal{R})}(L_{red})\subset T(M/\mathcal{R})\otimes\C\,,
 \end{aligned}
\end{equation}
where $\bullet\in\{+,-\}$. Then there exist two real distributions $\Delta_\bullet\subset\mathcal{N}_R\subset TM$ and $\Delta_{red}\subset T(M/\mathcal{R})$ such that
\begin{equation}\label{k1}
\begin{aligned}
&\Delta_\bullet\otimes\C=K_\bullet\cap\overline{K_\bullet}\,;\\
&\Delta_{red}\otimes\C=K_{red}\cap\overline{K_{red}}\,.
\end{aligned}
\end{equation}
Since $K_\bullet$ is Courant involutive, it follows that $\Delta_\bullet$ is also Courant involutive. Moreover, we have
$$K_\bullet\cong\Psi^*K_{red}\quad\text{and}\quad\Delta_\bullet\cong\Psi^*\Delta_{red}\quad(\text{as distributions})\,.$$ 
Now the distribution $\Delta_{red}$ defines a Poisson structure on $M/\mathcal{R}$, determined by $\mathcal{J}_{red}$; see \cite[Section~3.4]{Gua2}, \cite[Proposition 1.13]{bailey}. In particular, it induces a symplectic foliation (possibly singular), denoted by $\mathcal{S}$, and the codimension of the symplectic leaf through a point $y\in M/\mathcal{R}$ is equal to $(2\type(y))$ (cf. Definition \ref{def:type}). Denote by $T\mathcal{S}$ the tangent distribution of the foliation $\mathcal{S}$, and by $N\mathcal{S}\subset T(M/\mathcal{R})$ a corresponding normal distribution (that is, a complementary $T\mathcal{S}$). Equivalently, one may write
\begin{equation}\label{distri}
    T(M/\mathcal{R})=T\mathcal{S}\oplus N\mathcal{S}\,.
\end{equation}
Moreover $N_y\mathcal{S}$ admits a linear complex structure for $y\in M/\mathcal{R}$; see \cite[Proposition 1.13]{bailey}. Define a function $\rho_\bullet:M\to[0,\infty)$ by
\begin{equation}\label{rho1}
\rho_\bullet(x):=\frac{1}{2}\dim_{\R}(\Delta_\bullet)_x\cong\frac{1}{2}\dim_{\R}(\Delta_{red})_{\Psi(x)}=n-\type(\Psi(x))\,,    
\end{equation}
where $x\in M$. Suppose $M\xrightarrow{\Psi} M/\mathcal{R}$ is a principal bundle where $\eta$ as its connection $1$-form, given as in Theorem \ref{thm1} with $[R,\eta]=0$. It follows that $\ker(\eta)\subset TM$, which is the Horizontal bundle for the connection, is exactly the normal bundle $\mathcal{N}_R$. Because $d\eta\in\en(TM,T^*M)$ and $\mathcal{N}^*_R=\ann(R)$, we get $d\eta\in\en(\mathcal{N}_R,\mathcal{N}^*_R)$. Here, $\en(V,V')$ denotes the set of endomorphisms between two vector bundles $V,V'$. Let $\rho_\bullet>0$ on $M$, and suppose further that,
\begin{equation}\label{conditn}
\eta\wedge(d\eta)^{\rho_\bullet(x)}|_{(\Delta_\bullet)_x}\neq 0\quad\text{at each $x\in M$}\,.    
\end{equation}
Consequently, $(d\eta)_x$ is nondegenerate on $(\Delta_\bullet)_x$ for all $x\in M$. Since $\Delta_\bullet\cong\Psi^*\Delta_{red}$ and $\Psi$ is a surjective submersion, the curvature $\omega\in\Omega^2(M/\mathcal{R})$ is therefore nondegenerate on $(\Delta_{red})_y$ for all $y\in M/\mathcal{R}$. So, $\omega$ yields a the  symplectic foliation (possibly singular), denoted by $\mathcal{S}'$, on $M/\mathcal{R}$. Also, since $\omega$ is closed, $\ker(\omega)\subset T(M/\mathcal{R})$ provides a complementary foliation (possibly singular) of $\mathcal{S}'$ which is (pointwise) isomorphic to $N\mathcal{S}\subset T(M/\mathcal{R})$. In other words, 
\begin{equation}\label{distri1}
    T(M/\mathcal{R})=T\mathcal{S}'\oplus \ker(\omega)\,,\,\,\,\text{and}\,\,\,\ker(\omega)\cong N\mathcal{S}\,\,(\text{pointwise})\,,
\end{equation} where $T\mathcal{S}'$ is the tangent distribution of the foliation $\mathcal{S}$.
\medskip

When $\rho_\bullet \equiv 0$, the reduced structure $\mathcal{J}_{\mathrm{red}}$ can coincide, up to a $B$-field transformation, with a generalized complex structure induced by an ordinary complex structure on $M/\mathcal{R}$ as in Example \ref{complx eg}; see \cite[Example 4.21]{Gua}. Particularly, $M/\mathcal{R}$ is a complex manifold. In this case, $K_\bullet\subset\mathcal{N}_R\otimes\C$ is a Lie involutive complex subbundle satisfying $$K_\bullet\oplus\overline{K_\bullet}=\mathcal{N}_R\otimes\C\,,$$ which shows that the normal bundle $\mathcal{N}_R$ admits a complex structure. Since $\ker(\eta)=\mathcal{N}_R$, it follows that $d\eta(X,Y)=0$ for all $X,Y\in C^\infty(K_\bullet)$. Because $d\eta$ is a real $2$-form, and $K_\bullet^*=\overline{K_\bullet}$, we get that  $$d\eta\in C^{\infty}(K_\bullet\otimes\overline{K_\bullet})\,,$$ that is, $d\eta$ has no $(2,0)$ or $(0,2)$ component. Consequently, the curvature form $\omega$ is of type $(1,1)$ with respect to the induced complex structure.  

\medskip
When $\rho_\bullet>0$ (cf. \eqref{rho1}) is constant, that is, $\mathcal{J}_{red}$ is a regular generalized complex (GC) structure (cf. Definition \ref{def:type}), the associated symplectic foliations $\mathcal{S}$, $\mathcal{S}'$ are regular. The normal distribution of $\mathcal{S}$ becomes a subbundle, and admits an integrable complex structure; see \cite[Proposition]{Gua2}. In this case, the pair $(\mathcal{S}',\ker(\omega))$ defines a GC structure by \cite[Corollary~2.7]{bailey}, since $\omega$ vanishes along $\ker\omega$. Let us denote this structure by $\mathcal{J}_{M/\mathcal{R}}$. By \cite[Proposition~2.5]{bailey}, there exists a real form $B\in\Omega^2(M/\mathcal{R})$ such that
\begin{equation}\label{b1}
e^{B}\mathcal{J}_{red}\,e^{-B} = \mathcal{J}_{M/\mathcal{R}}\,,    
\end{equation}
where $e^B$ as defined in \eqref{B transformation}. Since both $\mathcal{J}_{red}$ and $\mathcal{J}_{M/\mathcal{R}}$ are GC structures, and only real closed $2$-forms preserve the Courant bracket, it follows that $B$ must be closed. Hence $\mathcal{J}_{red}$ and $\mathcal{J}_{M/\mathcal{R}}$ coincide up to a $B$-field transformation.

\begin{theorem}(Generalized Boothby-Wang Theorem)\label{thmc2}
Let $(M, \Phi, E_{\pm})$ be a regular generalized contact manifold, with notation and assumptions as in Theorem \ref{thm c1}. Suppose that the vector field $R$ is also complete and satisfies $[R,\eta]=0$. Then $M/\mathcal{R}$ admits a generalized complex (GC) structure $\mathcal{J}_{red}$, and the projection $M\xrightarrow{\Psi} M/\mathcal{R}$ defines a principal $\R$-or $S^1$-bundle structure on $M$ over the GC manifold $(M/\mathcal{R},\mathcal{J}_{red})$, with $\eta$ as a connection $1$-form and curvature $\omega\in\Omega^2(M/\mathcal{R})$ given by $d\eta=\Psi^*\omega$. 
\medskip

Moreover, for $\rho_\bullet>0$ on $M$, at each point $x\in M$, if $$\eta\wedge(d\eta)^{\rho_\bullet(x)}|_{(\Delta_\bullet)_x}\neq 0\,,$$ where $\rho_\bullet(x)=\dim_{\R}(\Delta_\bullet)_x$ and $\Delta_\bullet$ as in \eqref{k1}, then
\vspace{0.2em}
\begin{enumerate}
\setlength\itemsep{0.5em}
    \item The curvature $\omega$ determines a symplectic foliation (possibly singular) $\mathcal{S}'$, whose complementary foliation is given by $\ker(\omega)$.
    \item When $\mathcal{J}_{red}$ is a regular GC structure (that is, $\rho_\bullet$ is constant), the pair $(\mathcal{S}', \ker(\omega))$ defines another GC structure $\mathcal{J}_{M/\mathcal{R}}$ such that $$e^{B}\mathcal{J}_{red}\,e^{-B} = \mathcal{J}_{M/\mathcal{R}}\,,$$ for some real closed form $B\in\Omega^2(M/\mathcal{R})$.
\end{enumerate}
If $\rho_\bullet=0$ on $M$, then the leaf space $M/\mathcal{R}$ admits a complex structure $J_{M/\mathcal{R}}$, and  the curvature $\omega$ is of type $(1,1)$ with respect to $J_{M/\mathcal{R}}$. Additionally, the GC structure $\mathcal{J}_{\mathrm{red}}$ may be given by a $B$-field transformation of the GC structure defined by $J_{M/\mathcal{R}}$ as in Example \ref{complx eg}.
\end{theorem}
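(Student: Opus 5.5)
The plan is to assemble Theorem \ref{thmc2} from the results already established, in three stages: the bundle structure, the reduced GC structure, and the curvature analysis.

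\textbf{Bundle structure.} Since $(M,\Phi,E_\pm)$ is regular, $R$ is complete and $[R,\eta]=\mathcal{L}_R\eta=i_Rd\eta=0$, Theorem \ref{thm2} applies directly. It gives that all $R$-orbits are simultaneously copies of $\R$ or of $S^1$, that $\Psi$ is a principal $\R$- or $S^1$-bundle with connection form $\eta$, and that the curvature $\omega\in\Omega^2(M/\mathcal{R})$ is the closed form with $d\eta=\Psi^*\omega$. I will recall from the local computation preceding Theorem \ref{thm1} that in adapted coordinates $\eta=dt+\sum_i h_i'\,dx_i$. Hence $\ker\eta$ is a horizontal complement to $T\mathcal{R}$, and I identify it with $\mathcal{N}_R$.

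\textbf{Reduced GC structure.} By hypothesis $L_\bullet$ and $H_\bullet$ are Courant involutive, so Theorem \ref{thm c1}(3)--(5) produces $L_{red}\subset (T(M/\mathcal{R})\oplus T^*(M/\mathcal{R}))\otimes\C$ and the GC structure $\mathcal{J}_{red}$. It also gives $(H_\bullet+T\mathcal{R}\otimes\C)/T\mathcal{R}\otimes\C\cong\Psi^*L_{red}$. Projecting to the tangent part, I get $K_\bullet\cong\Psi^*K_{red}$ and $\Delta_\bullet\cong\Psi^*\Delta_{red}$ pointwise, via $d\Psi|_{\mathcal{N}_R}$. In particular $\rho_\bullet(x)=n-\type(\Psi(x))$.

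\textbf{Curvature when $\rho_\bullet>0$.} The nondegeneracy hypothesis $\eta\wedge(d\eta)^{\rho_\bullet}|_{\Delta_\bullet}\neq0$ implies that $d\eta$ is nondegenerate on $(\Delta_\bullet)_x$. Since $d\eta=\Psi^*\omega$ and $d\Psi$ maps $(\Delta_\bullet)_x$ isomorphically onto $(\Delta_{red})_{\Psi(x)}$, the form $\omega$ is nondegenerate on $\Delta_{red}$. By \cite[Section~3.4]{Gua2} and \cite[Proposition 1.13]{bailey}, $\Delta_{red}$ is the tangent distribution of the (possibly singular) symplectic foliation of the Poisson structure underlying $\mathcal{J}_{red}$. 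So $\omega$ restricted to its leaves gives leafwise closed nondegenerate forms, which defines $\mathcal{S}'$.

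For the complement, I take $\ker\omega$ pointwise. Nondegeneracy on $\Delta_{red}$ gives $\Delta_{red}\cap\ker\omega=0$. The main obstacle is showing that $\ker\omega$ has complementary dimension, i.e.\ $\operatorname{rank}\omega=\dim\Delta_{red}$, so that $T(M/\mathcal{R})=T\mathcal{S}'\oplus\ker\omega$; for a closed form this also gives integrability of $\ker\omega$. Nondegeneracy on $\Delta_{red}$ alone does not force this rank bound. I would first try to extract it from $K_\bullet$: using that $H_\bullet\subset L_\bullet$ is isotropic and lies in $\ker\eta$ directions, show that $d\eta$ pairs $K_\bullet$-type directions only through $\Delta_\bullet$. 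If that fails, I would adopt the paper's reading, namely that the stated hypothesis is meant to encode $\operatorname{rank} d\eta|_{\mathcal{N}_R}=2\rho_\bullet$.

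\textbf{Regular case.} When $\rho_\bullet$ is constant, both $\mathcal{S}$ and $\mathcal{S}'$ are regular, and \cite[Corollary~2.7]{bailey} turns $(\mathcal{S}',\ker\omega)$ into a GC structure $\mathcal{J}_{M/\mathcal{R}}$. Then \cite[Proposition~2.5]{bailey} gives a real $B$ with $e^B\mathcal{J}_{red}e^{-B}=\mathcal{J}_{M/\mathcal{R}}$. Integrability of both structures forces $B$ to be closed: for non-closed $B$, the transformed structure is only integrable with respect to the $dB$-twisted bracket.

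\textbf{Case $\rho_\bullet=0$.} Here $K_\bullet\oplus\overline{K_\bullet}=\mathcal{N}_R\otimes\C$ with $K_\bullet$ involutive. So $K_{red}$ defines a complex structure $J_{M/\mathcal{R}}$, and \cite[Example 4.21]{Gua} shows $\mathcal{J}_{red}$ is a $B$-transform of the structure of Example \ref{complx eg}. Finally, the type of $\omega$ follows from the type of $d\eta$ on $K_\bullet\oplus\overline{K_\bullet}$: I would check that $d\eta$ vanishes on $K_\bullet\times K_\bullet$, using Courant involutivity of $H_\bullet$ and isotropy, so $\omega$ has no $(2,0)$ or $(0,2)$ part and is of type $(1,1)$.
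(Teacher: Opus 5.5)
Your route (Theorem \ref{thm2} for the bundle, Theorem \ref{thm c1} for $\mathcal{J}_{red}$, then \cite{bailey} and a type argument for the curvature) is the paper's own, and the first two stages are fine. The gap is in the curvature stage, and it cannot be closed. Two of the claims you still need are false under the stated hypotheses. The paper's preceding discussion only asserts them: ``since $\omega$ is closed, $\ker(\omega)$ provides a complementary foliation'' and ``since $\ker(\eta)=\mathcal{N}_R$, $d\eta(X,Y)=0$ for all $X,Y\in C^\infty(K_\bullet)$''.

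The underlying reason is that when $R\in L_\bullet$ the curvature is decoupled from $\mathcal{J}_{red}$. Here is a construction. Take any GC manifold $(B,\mathcal{J}_B)$ with $B=\R^{2n}$ and $+i$-eigenbundle $L_B$, and any closed $\omega=d\theta$ on $B$. On $M=B\times\R$ put $R=\partial_t$, $\eta=dt+\Psi^*\theta$, $E_-=R$, $E_+=\eta$. Let $\Phi$ be $\mathcal{J}_B$ transported to $\ker\eta\oplus\ann(R)\cong\Psi^*(TB\oplus T^*B)$ by horizontal lift, with $\Phi=0$ on $\spn\{R,\eta\}$. For lifted sections, $[\tilde e_1,\tilde e_2]=\widetilde{[e_1,e_2]}-\omega(X_1,X_2)R$, where $X_i$ is the vector part of $e_i$, and $[R,\tilde e]=0$. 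Hence $L_-=\C R\oplus\widetilde{L_B}=H_-$ is Courant involutive. All hypotheses hold (regular, $R$ complete, $[R,\eta]=0$), with $\mathcal{J}_{red}=\mathcal{J}_B$ and curvature exactly $\omega$.

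\textbf{Complementarity of $\ker\omega$.} Take $B=\R^2\times\C$ with the product of $dx_1\wedge dx_2$ and the complex structure, and $\omega=dx_1\wedge dx_2+dx_3\wedge dx_4$. Then $\mathcal{J}_{red}$ is regular and $d\eta$ is nondegenerate on the $2$-dimensional $\Delta_-$, yet $\ker\omega=0$. So no argument through $K_\bullet$ can give $\operatorname{rank}\omega=\dim\Delta_{red}$. You were right to flag this step, but assuming that rank condition changes the theorem rather than proves it. Without it, the complementarity in (1) and all of (2) fail.

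\textbf{Type $(1,1)$.} Take $B=\C^2$ and $\omega=\mathrm{Re}(dz_1\wedge dz_2)$. Then $\rho_-\equiv0$, but $\omega$ has no $(1,1)$-part at all. Your mechanism for $d\eta|_{K_\bullet\times K_\bullet}=0$ fails because $K_\bullet$ is involutive only modulo $T\mathcal{R}$. For horizontal $X,Y$ lying over $K_{red}$, $d\eta(X,Y)=-\eta([X,Y])$ is precisely the $R$-component of $[X,Y]$. Involutivity and isotropy of $H_\bullet$ leave that component unconstrained when $R\in L_\bullet$. The step does go through if something kills that component. An example is the strong (in particular normal) Poon--Wade case: there, involutivity of $L_+=\C\eta\oplus E^{(1,0)}$ forces $\eta([X,Y])=0$ for sections $X,Y$ of $\pr_{TM}(E^{(1,0)})$, which is Morimoto's setting.

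\textbf{Closedness of $B$.} Your inference that integrability of both $\mathcal{J}_{red}$ and $e^B\mathcal{J}_{red}e^{-B}$ forces $dB=0$ is also invalid. Both are integrable whenever $dB$ vanishes on $K_{red}\times K_{red}\times K_{red}$. For a complex structure, any non-closed real $(1,1)$-form $B$ satisfies $e^B\mathcal{J}e^{-B}=\mathcal{J}$. So a closed $B$ has to be constructed, not deduced.
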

\begin{proof}
    Follows from the preceding discussion, and by applying Theorem \ref{thm2} and Theorem \ref{thm c1}. 
\end{proof}
The following corollary provides a direct generalization of \cite[Theorem 2]{bw}.
\begin{cor}
    Notations and assumptions are as in Theorem \ref{thmc2}, except without assuming $R$ is complete. Given any compact regular generalized contact manifold $(M, \Phi, E_{\pm})$, $M\rightarrow M/\mathcal{R}$ is a principal $S^1$-bundle over the generalized complex (GC) manifold $(M/\mathcal{R},\mathcal{J}_{red})$, where $\eta$ is a connection $1$-form whose curvature can induce the symplectic foliation associated with $\mathcal{J}_{red}$.  
\end{cor}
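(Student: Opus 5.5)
The plan is to reduce the corollary to Theorem \ref{thmc2} by checking that compactness supplies the one hypothesis that was dropped, namely completeness of $R$, and that it forces the circle case rather than the line case. On a compact manifold every smooth vector field is complete, because its integral curves cannot leave compact sets in finite time. Hence $R=X_++X_-$ (cf. \eqref{f-eta}) is complete, and $(M,\Phi,E_\pm)$ is a complete regular generalized contact manifold. All remaining assumptions are inherited verbatim from Theorem \ref{thmc2}: $[R,\eta]=0$, Courant involutivity of $L_\bullet$ and of $H_\bullet$, and $\mathcal{R}$ simple. So Theorem \ref{thmc2} applies, and yields the GC structure $\mathcal{J}_{red}$ on $M/\mathcal{R}$, together with a principal $\R$- or $S^1$-bundle structure with connection $\eta$ and curvature $\omega$, $d\eta=\Psi^*\omega$.

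The second step rules out the $\R$ case. The fibres $\Psi^{-1}(q)$ are leaves of a simple foliation, hence closed embedded submanifolds, as noted at the start of Subsection \ref{princi sec}. A closed subset of the compact manifold $M$ is compact, so every $R$-orbit is a circle. By part (1) of Theorem \ref{thm2}, and more concretely by the argument preceding Theorem \ref{thm2} showing $(M/\mathcal{R})_\gamma=M/\mathcal{R}$, all orbits then share the same minimal period $\gamma<\infty$. It follows that $\Psi$ is a principal $S^1$-bundle with $S^1\cong\R/\gamma\Z$. If $M$ is assumed connected, as in Subsection \ref{princi sec}, this also follows directly from Theorem \ref{thm1}, since $i_Rd\eta=[R,\eta]=0$. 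In addition, Proposition \ref{prop1} identifies the Euler class as $[\omega/\gamma]$.

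The final step concerns the clause ``whose curvature can induce the symplectic foliation associated with $\mathcal{J}_{red}$.'' Under the nondegeneracy condition \eqref{conditn} on $\Delta_\bullet$, part (1) of Theorem \ref{thmc2} shows that $\omega$ is nondegenerate on $\Delta_{red}$. Since $\Delta_\bullet\cong\Psi^*\Delta_{red}$, and $\Delta_{red}$ is exactly the tangent distribution of the Poisson/symplectic foliation $\mathcal{S}$ of $\mathcal{J}_{red}$, the foliation $\mathcal{S}'$ determined by $\omega$ has the same leaves as $\mathcal{S}$. When $\rho_\bullet=0$ the statement degenerates to the complex case, in which $\omega$ is of type $(1,1)$. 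I expect the main obstacle to be only the interpretive one in this step, namely making precise that ``can induce'' refers to the conditional conclusion of Theorem \ref{thmc2}. I would state this explicitly rather than claim it unconditionally.
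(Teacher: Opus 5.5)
Your proposal is correct and follows the paper's proof: the paper simply notes that every vector field on a compact manifold is complete and then invokes Theorem \ref{thmc2}. Your extra steps make explicit what that one-line argument leaves implicit. These are that compactness makes every fibre of $\Psi$ a circle, so the $\R$ case of Theorem \ref{thm2} cannot occur, and that ``can induce'' is to be read as the conclusion of Theorem \ref{thmc2} conditional on the nondegeneracy hypothesis \eqref{conditn}.
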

\begin{proof}
    Follows from Theorem \ref{thmc2}, since every nowhere vanishing vector field on $M$ is complete.
\end{proof}
\begin{remark}
Notice that $T_y\mathcal{S}' = T_y\mathcal{S}$ (cf. \eqref{distri}, \eqref{distri1}) for all $y \in M/\mathcal{R}$. It follows that $\ker(\omega)_y$ carries a linear complex structure, since $\ker(\omega)_y \cong N_y\mathcal{S}$. Consequently, the pair $(\omega_y, \ker(\omega)_y)$ defines a linear generalized complex structure (cf. \cite[Definition~4.1]{Gua}) on $T_y(M/\mathcal{R})$, which we denote by $(\mathcal{J}_{M/\mathcal{R}})_y$; see also \cite[Theorem~4.13]{Gua}. Then, applying \cite[Proposition~2.5]{bailey} pointwise, one obtains at each $y$ a real 2-form $B_y \in \wedge^2 T_y^*(M/\mathcal{R})$ such that
$$e^{B_y} (\mathcal{J}_{red})_y\, e^{-B_y} = (\mathcal{J}_{M/\mathcal{R}})_y\,.$$
However, both families $\{B_y\}_{y\in M/\mathcal{R}}$ and $\{(\mathcal{J}_{M/\mathcal{R}})_y\}_{y\in M/\mathcal{R}}$ may fail to vary smoothly whenever $(M,\mathcal{J}_{red})$ is not a regular GC manifold.
\end{remark}
\begin{remark}\label{rmk3}
    The key idea behind condition \ref{conditn} is to ensure that $(d\eta)_x|_{(\Delta_\bullet)_x}$ is nondegenerate, which in turn implies that the curvature induces the symplectic foliation. In other words, Theorem \ref{thmc2} remains valid if we assume only that $(d\eta)_x|_{(\Delta_\bullet)_x}$ is nondegenerate whenever $\rho_\bullet>0$ on $M$, without requiring condition \ref{conditn} in its entirety.
\end{remark}
\section{Observations and examples}
In this section, we focus on two things. First, we consider certain generalized contact manifolds and show how both Theorem \ref{thm c1} and Theorem \ref{thmc2} apply to them. This demonstrates that Theorem \ref{thmc2} generalizes the classical Boothby-Wang construction and extends related results in contact and almost contact geometry to generalized geometry. Second, we give a construction for new generalized contact manifolds that are not of Poon-Wade type. Notations are as in Section \ref{sec3}.
\subsection{Generalized contact manifolds of Poon-Wade type}
Let $(M, \Phi, E_{\pm})$ be a generalized contact manifold of Poon-Wade type; see Definition \ref{def:p-w}. Without loss of generality, assume that $E_+\in C^\infty(T^*M)$, $E_-\in C^\infty(TM)$, and that $L_-$ is Courant involutive. Then $R=E_-$ and $\eta=E_+$; cf. \eqref{f-eta}. Moreover, $R\in C^\infty(L_-)$, since $$L_-=(L_{E_-}\otimes\C)\oplus E^{(1,0)},\,\,\,L_{E_-}=T\mathcal{R}\,\,\,\text{and}\,\,\,E^{(1,0)}=\big\{e-i\Phi(e)\,|\,e\in\ker(\eta)\oplus\ann(R)\big\}\,.$$
It follows that $H_-=L_-$ as $L_-\subset T\mathcal{R}^{\perp}\otimes\C$; see Proposition \ref{prop c}, and so, $H_-$ is Courant involutive. Observe that, by Remark \ref{imp rmk1}, $H_-/(T\mathcal{R}\otimes\C)=E^{(1,0)}$, and so
$$K_-=\pr_{TM}(E^{(1,0)})\,,\,\,\,\text{and}\,\,\,\Delta_-=\pr_{TM}(E^{(1,0)})\cap \pr_{TM}(\overline{E^{(1,0)}})\,,$$ where $\pr_{TM}\,,K_-$, and $\Delta_-$ are as in \eqref{pr}, \eqref{k} and \eqref{k1}, respectively. Therefore, by Theorem \ref{thm c1}, $M/\mathcal{R}$ admits a generalized comple (GC) structure whenever it is a smooth manifold. In addition, if $R$ is complete and satisfies $\mathcal{L}_R\eta=0$, then $M\rightarrow M/\mathcal{R}$ becomes a principal bundle by Theorem \ref{thmc2}. Particularly, $\ker(\eta)=\Psi^*T(M/\mathcal{R})$. Hence, the following corollary is obtained, after applying Theorem \ref{thmc2}, as an extension of \cite[Corollary 5.9]{wade12}.
\begin{cor}\label{cor2}
Given any regular generalized contact manifold $(M, \Phi, E_{\pm})$ of Poon-Wade type with the associated foliaiton $\mathcal{R}$ (cf. Definition \ref{def:reg}), the following holds.
\begin{enumerate}
\setlength\itemsep{0.3em}
    \item $M/\mathcal{R}$ is a GC manifold.
    \item $M\rightarrow M/\mathcal{R}$ is a principal $\R$-or $S^1$-bundle, provided $R$ is complete and satisfies $\mathcal{L}_R\eta=0$. Also, $\ker(\eta)=\Psi^*T(M/\mathcal{R})$.
\end{enumerate}
\end{cor}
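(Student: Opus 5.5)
The plan is to reduce Corollary \ref{cor2} to Theorem \ref{thm c1} and Theorem \ref{thmc2}, using the discussion preceding the statement. The only point needing care is that the hypothesis of Theorem \ref{thm c1}, namely the Courant involutivity of $H_\bullet$, is automatic in the Poon-Wade setting.

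First, without loss of generality take case (2) of Definition \ref{def:p-w}: $E_-\in C^\infty(TM)$, $E_+\in C^\infty(T^*M)$, and $L_-$ Courant involutive. Case (1) is symmetric after interchanging the roles of $+$ and $-$. Then $X_+=0$ and $\eta_-=0$, so by \eqref{f-eta} we have $R=E_-$ and $\eta=E_+$.

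Next I show $H_-$ is Courant involutive. Since $E_-\in L_-$, we get $R\in C^\infty(L_-)$. By Proposition \ref{prop c}(3) (specifically the argument around \eqref{eq1.5}), this gives $H_-=L_-$. Concretely, $E^{(1,0)}$ is orthogonal to $E_\pm$, hence pairs trivially with $R$, and $\langle R,R\rangle=0$; therefore $L_-\subset T\mathcal{R}^\perp\otimes\C$. Since $L_-$ is Courant involutive, so is $H_-$, and the hypotheses of Theorem \ref{thm c1} hold with $\bullet=-$. For part (1), regularity means $\mathcal{R}$ is simple, and Theorem \ref{thm c1}(5) then gives the GC structure $\mathcal{J}_{red}$ on $M/\mathcal{R}$. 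By Remark \ref{imp rmk1}, the reduced Dirac structure is simply the descent of $E^{(1,0)}$.

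There is one subtlety in the regularity hypothesis. Definition \ref{def:reg} allows either $\mathcal{R}$ or $\mathcal{R}'$ to be simple. But here $R'=X_--X_+=X_-=R$, so the two foliations coincide and no ambiguity arises.

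For part (2), assume $R$ is complete and $\mathcal{L}_R\eta=0$. Since $\eta(R)=1$, this is equivalent to $i_Rd\eta=0$, i.e. $[R,\eta]=0$. Theorem \ref{thm2} (equivalently the first part of Theorem \ref{thmc2}) then makes $\Psi$ a principal $\R$- or $S^1$-bundle with connection form $\eta$. The horizontal distribution $\ker\eta$ is complementary to $T\mathcal{R}=\ker d\Psi$, so $d\Psi$ restricts to a fiberwise isomorphism $\ker\eta\to\Psi^*T(M/\mathcal{R})$. This gives $\ker(\eta)=\Psi^*T(M/\mathcal{R})$.

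The main obstacle is the identification $H_-=L_-$. This holds because of the Poon-Wade form, and I would verify it directly from the isotropy relations $\langle E^{(1,0)},E_\pm\rangle=0$. Everything else is citation.
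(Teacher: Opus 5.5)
Your proposal is correct and follows the paper's own argument. You reduce to case (2) of Definition \ref{def:p-w}, note that $R=E_-\in C^\infty(L_-)$ so $L_-\subset T\mathcal{R}^{\perp}\otimes\C$ and $H_-=L_-$ is Courant involutive, then invoke Theorem \ref{thm c1} for $\mathcal{J}_{red}$ and Theorems \ref{thm2}/\ref{thmc2} for the principal bundle with $\ker(\eta)=\Psi^*T(M/\mathcal{R})$. Your observations that $R'=\pm R$ (so regularity is unambiguous) and that $\langle E^{(1,0)},E_\pm\rangle=0$ are small clarifications the paper leaves implicit.
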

In the following, we apply Theorem \ref{thmc2} to certain types of generalized contact manifolds of Poon-Wade type.
\begin{example} (Contact case)\label{contact1}
    Let $(M, \Phi, E_{\pm})$ be a regular generalized contact manifold corresponding to a regular cooriented contact manifold $(M^{2n+1},\xi,\alpha)$ as in Example \ref{contact eg}. It follows that
    $$E^{(1,0)}=\big\{Y+\beta-i(\pi(\beta)+d\alpha(Y))\,|\,Y\in\ker(\alpha),\beta\in\ann(\xi)\big\}\,.$$ Setting $\beta=0$, we get that $\pr_{TM}(E^{(1,0)})\subseteq\ker(\alpha)\otimes\C$ where $\pr_{TM}$ is the projection (cf. \eqref{pr}). Other direction follows from the fact that $Y-i\,d\alpha(Y)\in E^{(1,0)}$ for all $Y\in\ker(\alpha)$. Therefore $$\pr_{TM}(E^{(1,0)})=\ker(\alpha)\otimes\C\,\,\,\text{and so,}\,\,\,\Delta_-=\ker(\alpha)\,.$$
    The contact condition $\alpha\wedge(d\alpha)^n\neq 0$ implies that $d\alpha|_{\ker(\alpha)}$ is nondegenerate. Since the forms $\alpha$, $d\alpha$ are invariant with respect to the Reeb vector field $\xi$, the endomorphism $\Phi$ is also invariant. More precisely, $[\xi,\alpha]=0$, $\mathcal{L}_\xi d\alpha=0$, and $\mathcal{L}_\xi\Phi=0$. Thus, by Corollary \ref{cor2}, $M\xrightarrow{\Psi} M/\mathcal{R}$ is a principal $\R$-or $S^1$-bundle over the GC manifold $(M/\mathcal{R},\mathcal{J}_{red})$ with $\alpha$ as its connection form, provided $\xi$ is complete. Consequently, by Theorem \ref{thmc2}, the curvature $\omega$ is a symplectic from on $M/\mathcal{R}$, since $\Psi^*(T(M/\mathcal{R}))=\ker(\alpha)$. Furthermore, since $\rho_-\equiv n$ on $M$, $\mathcal{J}_{red}$ is regular GC structure of type $0$, and has the form
$$\mathcal{J}_{red}=\begin{pmatrix}
    0    &-\omega^{-1}\\
    \omega    &0
\end{pmatrix}\,;$$ see Example \ref{symplectic eg}. This essentially recovers \cite[Theorem~1.1]{grabo25} from a more general viewpoint.
\end{example}
\begin{example}(Normal almost contact case)\label{nrml1}
Suppose $(M, \Phi, E_{\pm})$ represent a generalized contact manifold associated to a normal almost contact manifold $(M^{2n+1},\varphi,\xi,\alpha)$ as given in Example \ref{almost eg}. Then 
$$E^{(1,0)}=\big\{(Y-i\varphi(Y))+(\beta+i\varphi^*(\beta))\,|\,Y\in\ker(\alpha),\beta\in\ann(\xi)\big\}\,.$$
By Remark \ref{imp rmk2}, it follows that $\varphi(\ker(\alpha))=\ker(\alpha)$, since $\varphi^2|_{\ker(\alpha}=-I_{TM}$. So, by setting $\beta=0$, we have $\pr_{TM}(E^{(1,0)})\subseteq\ker(\alpha)\otimes\C$. Similarly, $\pr_{TM}(E^{(0,1)})\subseteq\ker(\alpha)\otimes\C$. In fact, we can express $\pr_{TM}(E^{(1,0)})$ in the form
$$\pr_{TM}(E^{(1,0)})=\big\{(Y-i\varphi(Y)\,|\,Y\in\ker(\alpha)\big\}\,,$$ which implies that $\pr_{TM}(E^{(1,0)})$ is a complex subbundle of rank $n$, and hence $$\pr_{TM}(E^{(1,0)})\oplus\pr_{TM}(E^{(0,1)})=\ker(\alpha)\otimes\C\,.$$ Since $(M, \Phi, E_{\pm})$ also normal, by Proposition \ref{strng}, we have $[\xi,\alpha]=0$ and $$[C^\infty(E^{(1,0)}, C^\infty(E^{(1,0)})]\subseteq C^\infty(E^{(1,0)})\,.$$ Therefore, $\varphi$ is a complex structure on $\ker(\alpha)$ with the $+i$-eigenbundle $\pr_{TM}(E^{(1,0)})$. Furthermore, by Theorem \ref{thmc2}, $M\xrightarrow{\Psi} M/\mathcal{R}$ is a principal $\R$-or $S^1$-bundle and $M/\mathcal{R}$ admits a GC structure $\mathcal{J}_{red}$, provided $\xi$ is complete and its induced foliation $\mathcal{R}$ is regular. Also, $\mathcal{J}_{red}$ is a regular GC structure of type $n$, and its $+i$-eigenbundle $L_{red}$ is of the form
$$L_{red}=T^{1,0}(M/\mathcal{R})\oplus(T^{0,1}(M/\mathcal{R}))^{*}\,,$$ where $T^{1,0}(M/\mathcal{R})$ denotes the $+i$-eigenbundle of the complex structure on $M/\mathcal{R}$; see Example \ref{complx eg}. This complex structure is induced from $\ker(\alpha)$, using the identification $\Psi^*(T(M/\mathcal{R}))=\ker(\alpha)$, since $\alpha$ is a connection $1$-form. Now, $d\alpha(X,Y)=0$ for all $X,Y\in C^\infty(\pr_{TM}(E^{(1,0)}))$. Since $d\alpha$ is a real form, it follows that 
$$d\alpha\in C^\infty\big(\pr_{TM}(E^{(1,0)})^*\otimes\pr_{TM}(E^{(0,1)})^*\big)\,.$$ Therefore, the curvature form $\omega$ is of type $(1,1)$. This provides a new proof of a well-known result by Morimoto \cite[Theorem~1]{mori}.
\end{example}
\begin{example}(Almost cosymplectic case)\label{cosymp1}
    Let $(M, \Phi, E_{\pm})$ be a regular generalized contact manifold associated with an almost cosymplectic manifold $(M^{2n+1},\alpha,\theta)$, as defined in Example \ref{cosym eg}. Then, by Corollary \ref{cor2}, $M\xrightarrow{\Psi}M/\mathcal{R}$ is a principal bundle over the GC manifold $(M, \mathcal{J}_{red})$ with $\alpha$ as its connection, provided $\xi$ is complete and $[\xi,\alpha]=0$. Here $\xi$ is the unique nowhere vanishing vector field associated with $(\alpha,\theta)$. So,
    $$E^{(1,0)}=\big\{(Y-i\pi(\beta))+(\beta-i\theta(Y))\,|\,Y\in\ker(\alpha),\beta\in\ann(\xi)\big\}\,.$$
    It follows that $\ker(\alpha)\otimes\C\subseteq\pr_{TM}(E^{(1,0)})$, and hence $\pr_{TM}(E^{(1,0)})=\ker(\alpha)\otimes\C$. The almost cosymplectic condition $\alpha\wedge\theta^n\neq 0$ implies that the restriction $\theta|_{\ker(\alpha)}$ is nondegenerate. Since $d\theta=0=\mathcal{L}_\xi\theta$, there exists a symplectic form $\tilde{\omega}$ such that $\theta=\Psi^*\tilde{\omega}$. Consequently, $\mathcal{J}_{red}$ is regular GC structure of type $0$, since $\rho_-\equiv n$ on $M$, and and it is given by the form
$$\mathcal{J}_{red}=\begin{pmatrix}
    0    &-\tilde{\omega}^{-1}\\
    \tilde{\omega}    &0
\end{pmatrix}\,;$$ see Example \ref{symplectic eg}. In this case, condition \eqref{conditn} does not fully determine the curvature $\omega$. It may vanish, as in cosymplectic manifolds, or be nonzero, as when $\alpha$ is a contact form.
\end{example}
\begin{remark}
    Theorem \ref{thmc2} does not imply that the cohomology class of the curvature form, when restricted to the symplectic leaves of $\mathcal{J}_{red}$, coincides with the cohomology class of the symplectic forms on those leaves. This is clear from Examples \ref{nrml1}-\ref{cosymp1}. In particular, Example \ref{nrml1} demonstrates that even when the symplectic leaves are zero-dimensional, the curvature class may still be nonzero. On the other hand, Example \ref{cosymp1} shows that for cosymplectic manifolds the curvature always vanishes, whereas the symplectic forms on the leaves are nonzero. It is an interesting problem to fully determine the most general yet natural conditions on the total space under which the curvature induces the symplectic foliation.
\end{remark}
\subsection{Construction for new examples of non Poon-Wade types}\label{non pw}
Let $M$ be a smooth manifold of dimension $2n+1$. Let $E_{\pm}\in C^\infty(TM\oplus T^*M)$ be smooth sections that satisfies 
$$\pr_{TM}(E_\pm)\neq0\neq\pr_{T^*M}(E_\pm)\,,\langle E_\pm,E_\pm\rangle=0\,,\,\,\,\text{and}\,\,\,2\langle E_+,E_-\rangle=1\,.$$
Here $\pr_{TM}$, $\pr_{T^*M}$ are projection onto $TM$ and $T^*M$, respectively; see \eqref{pr}. Let $V:=\spn\{E_+,E_-\}$ and $V^\perp\subset C^\infty(TM\oplus T^*M)$ be the orthogonal complement (with respect to the bilinear form \eqref{bilinear}); see Definition \ref{imp def}. Since $V$ is of rank $2$, $V^\perp$ is a subbundle of rank $4n$. Suppose $V^\perp$ admits a generalized complex (GC) structure $\mathcal{J}$, that is, 
$$\mathcal{J}:V^\perp\rightarrow V^\perp\,,$$ and satisfies all of the properties in Definition \ref{gcs}. Define
\begin{equation}\label{neweg1}
\Phi=
  \begin{cases}
    \mathcal{J}& \text{on}\quad V^\perp\,;\\
    0 & \text{on}\quad V\,.
\end{cases}  
\end{equation}
Then $(M,\Phi,E_\pm)$ is a generalized almost contact manifold which is not of Poon-Wade type. It follows that the corresponding bundles $L_\pm$ (see \eqref{contact dirac}) can be written in the form
$$L_\pm=(L_{E_\pm}\otimes\C)\oplus\ker(\mathcal{J}- i)\,.$$ Since $\mathcal{J}$ is a GC structure, its eigenbundles $\ker(\mathcal{J}\pm i)$ are Courant involutive. Therefore, $(\Phi,E_\pm)$ is a generalized contact structure if and only if  any of the following conditions hold,
\begin{equation}\label{con1}
\begin{aligned}
    &\bullet\quad [E_+,\ker(\mathcal{J}- i)]\subseteq C^\infty(L_+)\,;\\
    &\bullet\quad [E_-,\ker(\mathcal{J}- i)]\subseteq C^\infty(L_-)\,.
\end{aligned}
\end{equation}
We now apply this construction in the subsequent example.
\begin{example}(For $\R^{2n+1}$ and $\T^{2n+1}$)\label{neweg2}
  Let $M^{2n+1}$ be a smooth parallelizable manifold. Let $\{X_k\}^{2n+1}_{k=1}$ be a global frame of $TM$, and $\{\alpha_l\}^{2n+1}_{l=1}$ denotes the corresponding dual frame. Without loss of generality, set 
$$E_+:=X_1+\alpha_2\,\,\,\text{and}\,\,\,E_-:=\frac{1}{2}(X_2+\alpha_1)\,.$$
Clearly, $\langle E_\pm,E_\pm\rangle=0$ and $2\langle E_+,E_-\rangle=1$. Then, $V=\spn\{E_+,E_-\}$ and $$V^\perp=\spn\{X_k\,,\alpha_l\}_{k,l\neq 1,2}\oplus\spn\{X_2-\alpha_1\,,X_1-\alpha_2\}\,.$$
Since $\dim_\R V^\perp=4n$, it always admits a regular GC structure $\mathcal{J}$ by \cite[Proposition~4.5]{Gua}; see also Theorem \ref{darbu thm}. Therefore, $(\Phi,E_\pm)$ is a generalized almost contact structure where $\Phi$ as defined in \eqref{neweg1}. This example provides a direct generalization of Example \ref{new eq}.
\medskip

Let us construct a specific example of such a $\mathcal{J}$ as follows. Choose a complex structure $\phi$ on $\spn\{X_k\}_{k=4}^{2n+1}$ such that $$\ker(\phi-i)=\spn\left\{X_k-iX_{k+2}\right\}^{2n-1}_{k=4}\,,$$ and define $\mathcal{J}_1=\begin{pmatrix}
 \phi     &0 \\
    
    0        &-\phi^{*}
\end{pmatrix}$ on $\spn\{X_k,\alpha_l\}_{k,l=4}^{2n+1}$. Then as in Example \ref{new eq}, consider a map $\mathcal{J}_2$ on $\spn\{X_1-\alpha_2, X_2-\alpha_1,X_3,\alpha_3\}$, defined by  
\begin{align*}
&\mathcal{J}_2(X_1-\alpha_2)=\sqrt{2}\alpha_3\,,\,\mathcal{J}_2(X_3)=\frac{1}{\sqrt{2}}(X_2-\alpha_1)\,;\\
&\mathcal{J}_2(X_2-\alpha_1)=-\sqrt{2} X_3\,,\,\,\text{and}\,\,\mathcal{J}_2(\alpha_3)=-\frac{1}{\sqrt{2}}(X_1-\alpha_2)\,.
\end{align*}
Note that 
\begin{equation}\label{ker1}
    \begin{aligned}
       &\ker(\mathcal{J}_2- i)=\spn\left\{X_1-\alpha_2-\sqrt{2}i\alpha_3\,,X_2-\alpha_1+\sqrt{2}iX_3\right\}\,,\\
       &\ker(\mathcal{J}_2+i)=\spn\left\{X_1-\alpha_2+\sqrt{2}i\alpha_3\,,X_2-\alpha_1-\sqrt{2}iX_3\right\}\,.
    \end{aligned}
\end{equation}
Therefore, define
\begin{equation}\label{neweq1}
\mathcal{J}=
  \begin{cases}
    \mathcal{J}_1& \text{on}\quad \spn\{X_k\,,\alpha_l\}_{k,l=4}^{2n+1};\\
    \mathcal{J}_2 & \text{on}\quad \spn\{X_1-\alpha_2\,,X_2-\alpha_1,X_3\,,\alpha_3\}\,.
\end{cases}  
\end{equation}
Hence $(\Phi,E_\pm)$ is a generalized almost contact structure with respect to $\mathcal{J}$ in \eqref{neweq1}. Also
\begin{align*}
    \ker(\mathcal{J}-i)&=\ker(\mathcal{J}_1-i)\oplus\ker(\mathcal{J}_2-i)\\
    &=\spn\left\{X_k-iX_{k+2}\,,\alpha_k-i\alpha_{k+2}\right\}^{2n-1}_{k=4}\\
    &\quad\oplus\spn\left\{X_1-\alpha_2-\sqrt{2}i\alpha_3\,,X_2-\alpha_1+\sqrt{2}iX_3\right\}
\end{align*}
Thus, we get
\begin{align*}
    &[X_1+\alpha_2,X_k-iX_{k+2}]=[X_1,X_k]-i[X_1,X_{k+2}]-\mathcal{L}_{X_k}\alpha_2+i\mathcal{L}_{X_{k+2}}\alpha_2\,;\\
    &[X_1+\alpha_2,\alpha_k-i\alpha_{k+2}]=\mathcal{L}_{X_1}\alpha_k-i\mathcal{L}_{X_1}\alpha_{k+2}\,;\\
    &[X_1+\alpha_2,X_1-\alpha_2-\sqrt{2}i\alpha_3]=-2\mathcal{L}_{X_1}\alpha_2-\sqrt{2}i\mathcal{L}_{X_1}\alpha_3;\\
    &[X_1+\alpha_2,X_2-\alpha_1+\sqrt{2}iX_3]=[X_1,X_2]+\sqrt{2}i[X_1,X_3]-\mathcal{L}_{X_1}\alpha_1-\mathcal{L}_{X_2}\alpha_2-\sqrt{2}i\mathcal{L}_{X_3}\alpha_3\,.
\end{align*}
Likewise, we can compute the Courant brackets for $E_-$. Therefore, whether $(\Phi,E_\pm)$ defines a generalized contact structure depends only on the choice of a global frame and co-frame. In particular, suppose $[X_k,X_l]=0$, $[X_k,\alpha_l]=0$ for all $k,l\in\{1,\ldots,2n+1\}$. It follows that $(\Phi,E_\pm)$ is a normal generalized contact structure. For example, if $M=\R^{2n+1}$, $X_k=\frac{\partial}{\partial x_k}$ and $\alpha_l=dx_l$, then $(\Phi,E_\pm)$ decends to a normal generalized contact structure on $\T^{2n+1}=\R^{2n+1}/\Z^{2n+1}$. Moreover, the generalized almost contact structure in Example \ref{new eq}, is also normal generalized contact structure.
\end{example}

\begin{example}\label{neweg2.2}
Let $M=\R^2\times\R$ with coordinates $(x,y,t)$. Consider the symplectic form $\omega=dx\wedge dy$ on $\R^2$, and let $\mathcal{J}$ denote the associated generalized complex (GC) structure defined by $\omega$, as in Example \ref{symplectic eg}. Particularly, we have
$$\mathcal{J}\left(\frac{\partial}{\partial x}\right)=-dy\,,\mathcal{J}\left(\frac{\partial}{\partial y}\right)=dx\,,\mathcal{J}(dx)=\left(\frac{\partial}{\partial y}\right)\,,\,\,\,\text{and}\,\,\,\mathcal{J}(dy)=-\left(\frac{\partial}{\partial x}\right)\,.$$
Extend on $M$ by $$\mathcal{J}(dt)=\mathcal{J}\left(\frac{\partial}{\partial t}\right)=0\,.$$
Denote this endomorphism by $\Phi$. Set $$E_+=\frac{\partial}{\partial t}\quad\text{and}\quad E_-=\cos(2\pi t)\frac{\partial}{\partial x}+dt\,.$$
It follows that $(\Phi,E_\pm)$ defines a generalized almost contact structure on $M$. Let $V=\spn\{E_+,E_-\}$. Then the orthogonal complement $V^\perp$ (cf. Definition \ref{imp def}) can be written as
$$V^\perp=\spn\left\{\frac{\partial}{\partial x}\,,\frac{\partial}{\partial y}\,,dx-\cos(2\pi t)\frac{\partial}{\partial t}\,,dy\right\}\,.$$
\end{example}
Therefore, the associated $+i$-eigenbundle $\Phi$ is given by
$$\ker(\Phi-i)=\spn\left\{\frac{\partial}{\partial x}+idy\,,\frac{\partial}{\partial y}-idx\,,dx-\cos(2\pi t)\frac{\partial}{\partial t}-i\frac{\partial}{\partial y}\,,dy+i\frac{\partial}{\partial x}\right\}\,.$$
It is then immediate that $L_+$ (cf. \eqref{contact dirac}) is Courant involutive, and consequently, $(\Phi,E_\pm)$ is a generalized contact structure. Moreover, 
\begin{align*}
 [E_-\,,dx-\cos(2\pi t)\frac{\partial}{\partial t}-i\frac{\partial}{\partial y}]&=[\cos(2\pi t)\frac{\partial}{\partial x}\,,\cos(2\pi t)\frac{\partial}{\partial t}]+[\cos(2\pi t)\frac{\partial}{\partial t}\,,dt]\\
 &=\pi\left(\sin(4\pi t)\frac{\partial}{\partial x}+dt\right)\,.
\end{align*}
If possible, let $\sin(4\pi t)\frac{\partial}{\partial x}+dt\in C^\infty(L_-)$. Then, for some smooth functions $f,g$, we have $\sin(4\pi t)\frac{\partial}{\partial x}+dt=fE_-+gV$ where $V\in\ker(\Phi-i)$. Then,
$$\frac{f}{2}=\left\langle\sin(4\pi t)\frac{\partial}{\partial x}+dt,\frac{\partial}{\partial t}\right\rangle=1\,,$$
implying,
$$(\sin(4\pi t)-2\cos(2\pi t))\frac{\partial}{\partial x}-dt\in\ker(\Phi-i)\,,$$ which is not possible since $\left\langle(\sin(4\pi t)-2\cos(2\pi t))\frac{\partial}{\partial x}-dt\,,E_+\right\rangle\neq\,0$. Also $[E_+,E_-]\neq 0$. Hence $L_-$ is not Courant involutive, implying $(\Phi,E_\pm)$ is not strong, and normal generalized contact structure; see Definition \ref{def:contct}.
\medskip

Furthermore, since $\Phi$ and $E_\pm$ are translation invariant, the pair $\Phi, E_\pm$  descends to generalized contact structures on $\T^2\times\R$ $(\cong\R^2/\Z^2\times\R)$ , and $\T^2\times S^1$ $(\cong\R^2/\Z^2\times\R/\Z)$. These structures are not strong, normal and also, are not of Poon-Wade type.

\medskip


\begin{thebibliography}{}
\setlength\itemsep{0.6em}

\bibitem{aldi}Aldi, M. \& Grandini, D. Generalized contact geometry and T-duality. {\em J. Geom. Phys.}. \textbf{92} pp. 78-93 (2015), \url{https://doi.org/10.1016/j.geomphys.2015.02.007}.

\bibitem{bailey}Bailey, M. Symplectic foliations and generalized complex structures. {\em Canad. J. Math.}. \textbf{66}, 31-56 (2014), \url{https://doi.org/10.4153/CJM-2013-007-6}.

\bibitem{blair}Blair, D. Riemannian geometry of contact and symplectic manifolds. (Birkhäuser Boston, Ltd., Boston, MA,2010), \url{https://doi.org/10.1007/978-0-8176-4959-3}.

\bibitem{bw}Boothby, W. \& Wang, H. On contact manifolds. {\em Ann. Of Math. (2)}. \textbf{68} pp. 721-734 (1958), \url{https://doi.org/10.2307/1970165}.

\bibitem{bott71}Bott, R. Lectures on characteristic classes and foliations. {\em Lectures On Algebraic And Differential Topology (Second Latin American School In Math., Mexico City, 1971)}. \textbf{Vol. 279} pp. 1-94 (1972), Notes by Lawrence Conlon, with two appendices by J. Stasheff.

\bibitem{burst}Bursztyn, H., Cavalcanti, G. \& Gualtieri, M. Reduction of Courant algebroids and generalized complex structures. {\em Adv. Math.}. \textbf{211}, 726-765 (2007), \url{https://doi.org/10.1016/j.aim.2006.09.008}.

\bibitem{zambon1}Calvo, I., Falceto, F. \& Zambon, M. Deformation of Dirac structures along isotropic subbundles. {\em Rep. Math. Phys.}. \textbf{65}, 259-269 (2010), \url{https://doi.org/10.1016/S0034-4877(10)80020-5}.

\bibitem{gei08}Geiges, H. An introduction to contact topology. (Cambridge University Press, Cambridge, 2008), \url{https://doi.org/10.1017/CBO9780511611438}.

\bibitem{gomez}Gomez, R. \& Talvacchia, J. On products of generalized geometries. {\em Geom. Dedicata}. \textbf{175} pp. 211-218 (2015), \url{https://doi.org/10.1007/s10711-014-0036-6}.

\bibitem{grabo25}Grabowska, K. \& Grabowski, J. The regularity and products in contact geometry. {\em Annali Di Matematica Pura Ed Applicata (1923 -)}. (2025, 11), \url{https://doi.org/10.1007/s10231-025-01631-7}.

\bibitem{Gua} Gualtieri, M. Generalized complex geometry, DPhil thesis, University of Oxford, 2003, \url{arXiv:math.DG/0401221}.

\bibitem{Gua2} Gualtieri, M. Generalized complex geometry. Ann. of Math. (2) 174 (2011), no. 1, 75--123.

\bibitem{hit1}Hitchin, N. Generalized Calabi-Yau manifolds. {\em Q. J. Math.}. \textbf{54}, 281-308 (2003), \url{https://doi.org/10.1093/qjmath/54.3.281}.

\bibitem{pw2}Iglesias-Ponte, D. \& Wade, A. Contact manifolds and generalized complex structures. {\em J. Geom. Phys.}. \textbf{53}, 249-258 (2005), \url{https://doi.org/10.1016/j.geomphys.2004.06.006}.

\bibitem{kobayashi14}Kobayashi, S. Differential geometry of complex vector bundles. (Princeton University Press, Princeton, NJ), Reprint of the 1987 edition.

\bibitem{lang}Lang, S. Differential and Riemannian manifolds. (Springer-Verlag, New York,1995), \url{https://doi.org/10.1007/978-1-4612-4182-9}.

\bibitem{lee}Lee, J. Introduction to smooth manifolds. (Springer, New York,2013).

\bibitem{meig}Meigniez, G. Submersions, fibrations and bundles. {\em Trans. Amer. Math. Soc.}. \textbf{354}, 3771-3787 (2002), \url{https://doi.org/10.1090/S0002-9947-02-02972-0}.

\bibitem{mori}Morimoto, A. On normal almost contact structures with a regularity. {\em Tohoku Math. J. (2)}. \textbf{16} pp. 90-104 (1964), \url{https://doi.org/10.2748/tmj/1178243735}.

\bibitem{pw3}Poon, Y. \& Wade, A. Approaches to generalize contact structures. {\em Pure Appl. Math. Q.}. \textbf{6}, 603-622 (2010), \url{https://doi.org/10.4310/PAMQ.2010.v6.n2.a12}.

\bibitem{pw}Poon, Y. \& Wade, A. Generalized contact structures. {\em J. Lond. Math. Soc. (2)}. \textbf{83}, 333-352 (2011), \url{https://doi.org/10.1112/jlms/jdq069}.

\bibitem{sekiya}Sekiya, K. Generalized almost contact structures and generalized Sasakian structures, {\em Osaka J. Math.} {\bf 52} (2015), no.~1, 43--59; MR3326601

\bibitem{thurston}Thurston, W. A generalization of the Reeb stability theorem. {\em Topology}. \textbf{13} pp. 347-352 (1974), \url{https://doi.org/10.1016/0040-9383(74)90025-1}.

\bibitem{tu17}Tu, L. Differential geometry: Connections, curvature, and characteristic classes (Springer, Cham, 2017), \url{https://doi.org/10.1007/978-3-319-55084-8}.

\bibitem{vais1}Vaisman, I. Dirac structures and generalized complex structures on $TM\times\R^h$. {\em Adv. Geom.}. \textbf{7}, 453-474 (2007), \url{https://doi.org/10.1515/ADVGEOM.2007.029}.

\bibitem{vais2}Vaisman, I. From generalized K\"{a}hler to generalized Sasakian structures. {\em J. Geom. Symmetry Phys.}. \textbf{18} pp. 63-86 (2010).

\bibitem{vita}Vitagliano, L. \& Wade, A. Generalized contact bundles. {\em C. R. Math. Acad. Sci. Paris}. \textbf{354}, 313-317 (2016), \url{https://doi.org/10.1016/j.crma.2015.12.009}.

\bibitem{wade12}Wade, A. Local structure of generalized contact manifolds. {\em Differential Geom. Appl.}. \textbf{30}, 124-135 (2012), \url{https://doi.org/10.1016/j.difgeo.2011.11.009}.

\bibitem{wright}Wright, K. Generalised contact geometry as reduced generalised complex geometry. {\em J. Geom. Phys.}. \textbf{130} pp. 331-348 (2018), \url{https://doi.org/10.1016/j.geomphys.2018.04.009}.

\bibitem{zambon}Zambon, M. Reduction of branes in generalized complex geometry. {\em J. Symplectic Geom.}. \textbf{6}, 353-378 (2008), \url{https://doi.org/10.4310/jsg.2008.v6.n4.a1}.
\end{thebibliography}
\end{document}